\newcommand{\R}{\mathbb{R}}
\newcommand{\N}{\mathbb{N}}
\numberwithin{equation}{section}
\theoremstyle{plain}
\newtheorem{theorem}{Theorem}[section] 
\newtheorem{proposition}[theorem]{Proposition} 
\theoremstyle{definition}
\newtheorem{definition}[theorem]{Definition}
\newtheorem{remark}[theorem]{Remark}
\DeclarePairedDelimiter{\abs}{\lvert}{\rvert}
\DeclarePairedDelimiter{\norm}{\lVert}{\rVert}
\DeclarePairedDelimiter{\duality}{\langle}{\rangle}
\DeclareMathOperator*{\essinf}{ess\,inf}
\DeclareMathOperator*{\argmin}{arg\,min}
\newcommand{\eps}{\varepsilon}
\renewcommand{\phi}{\varphi}
\renewcommand{\bar}{\overline}
\renewcommand{\vec}{\mathbf}
\def\genspazio #1#2#3#4#5{#1^{#2}(#5,#4;#3)}
\def\spazio #1#2#3{\genspazio {#1}{#2}{#3}T0}
\def\LT {\spazio L}
\def\HT {\spazio H}
\def\CT #1#2{C^{#1}([0,T];#2)}
\def\Lx #1{L^{#1}(\Omega)}
\def\Lt #1{L^{#1}(0,T)}
\def\Lqt #1{L^{#1}(Q_T)}
\def\Hx #1{H^{#1}(\Omega)}
\def\Wx #1{W^{#1}(\Omega)}
\def\Cx #1{C^{#1}(\bar{\Omega})}
\def\Lxod #1{L^{#1}(\Omega \setminus D)}
\def\Hxod #1{H^{#1}(\Omega \setminus D)}
\def\Accorpa #1#2 #3 {\gdef #1{\eqref{#2}--\eqref{#3}}%
	\wlog{}\wlog{\string #1 -> #2 - #3}\wlog{}}
\def\ls{<}
\def\gs{>}
\def\mezzo {\frac{1}{2}}
\def\ddt {\frac{\de}{\de t}}
\def\de {\mathrm{d}}
\def\D {\mathrm{D}}
\def\weakstar {\stackrel{\ast}{\rightharpoonup}}
\def\weak {\rightharpoonup}
\def\n {\vec{n}}
\def\phib {\bar{\phi}}
\def\mub {\bar{\mu}}
\def\ub {\bar{u}}
\def\Uad {\mathcal{U}_{\text{ad}}}
\def\Jcal {\mathcal{J}}
\def\lambdazb {\bar{\lambda}_0}
\def\lambdab {\bar{\lambda}}
\def\Scal {\mathcal{S}}
\def\Tcal {\mathcal{T}}
\def\XX {\mathbb{X}}
\def\YY {\mathbb{Y}}
\def\WW {\mathbb{W}}
\def\phih {\phi^h}
\def\muh {\mu^h}
\def\chiod {\chi_{\Omega \setminus D}}
\def\pb {\bar{p}}
\def\qb {\bar{q}}
\def\Jred {\mathcal{J}_{\text{red}}}
\def\hb {\bar{h}}
\def\ftil {\tilde{f}}
\def\phitil {\tilde{\phi}}
\def\mutil {\tilde{\mu}}
\begin{document}
	
	\begin{center}
		
		\LARGE{\textbf{Optimal control of the fidelity coefficient in a Cahn--Hilliard image inpainting model}}
		
		\vskip0.75cm
		
		\large{\textsc{Elena Beretta$^1$}} \\
		\normalsize{e-mail: \texttt{eb147@nyu.edu}} \\
		\vskip0.4cm
		
		\large{\textsc{Cecilia Cavaterra$^{2,3}$}} \\
		\normalsize{e-mail: \texttt{cecilia.cavaterra@unimi.it}} \\
		\vskip0.4cm
		
		\large{\textsc{Matteo Fornoni$^4$}} \\
		\normalsize{e-mail: \texttt{matteo.fornoni@unipv.it}} \\
		\vskip0.4cm
		
		\large{\textsc{Maurizio Grasselli$^5$}} \\
		\normalsize{e-mail: \texttt{maurizio.grasselli@polimi.it}} \\
		\vskip0.5cm
		
		\footnotesize{$^1$Division of Science, New York University Abu Dhabi, Saadiyat Island, Abu Dhabi, United Arab Emirates}
		\vskip0.3cm
		
		\footnotesize{$^2$Dipartimento di Matematica \lq\lq F. Enriques\rq\rq, Universit\`{a} degli Studi di Milano, 20133 Milano, Italy}
	\vskip0.3cm

        \footnotesize{$^3$Istituto di Matematica Applicata e Tecnologie Informatiche ``Enrico Magenes'', CNR, 27100 Pavia, Italy}
		\vskip0.3cm
        
		\footnotesize{$^4$Dipartimento di Matematica \lq\lq F. Casorati\rq\rq, Universit\`a degli Studi di Pavia, 27100 Pavia, Italy}
		\vskip0.3cm
		
		\footnotesize{$^5$Dipartimento di Matematica, Politecnico di Milano, 20133 Milano, Italy}
		\vskip0.75cm
		
	\end{center}

	\begin{abstract}\noindent
        We consider an inpainting model proposed by A.~Bertozzi et al., which is based on a Cahn--Hilliard-type equation. 
        This equation describes the evolution of an order parameter that represents an approximation of the original image occupying a bounded two-dimensional domain.
        The given image is assumed to be damaged in a fixed subdomain, and the equation is characterised by a linear reaction term. This term is multiplied by the so-called fidelity coefficient, which is a strictly positive bounded function defined in the undamaged region.
        The idea is that, given an initial image, the order parameter evolves towards the given image and this process properly diffuses through the boundary of the damaged region, restoring the damaged image, provided that the fidelity coefficient is large enough.
        Here, we formulate an optimal control problem based on this fact, namely, our cost functional accounts for the magnitude of the fidelity coefficient.
        Assuming a singular potential to ensure that the order parameter takes its values in between $0$ and $1$, we first analyse the control-to-state operator and prove the existence of at least one optimal control, establishing the validity of first-order optimality conditions. Then, under suitable assumptions, we demonstrate second-order optimality conditions.

		\vskip3mm
		
		\noindent {\bf Key words:} Inpainting, Cahn--Hilliard equation, singular potential, strict separation property, optimal control, first-order optimality condition, second-order optimality condition. 
		
		\vskip3mm
		
		\noindent {\bf AMS (MOS) Subject Classification:} 35Q99, 49K20, 49J20.
		
	\end{abstract}

	\pagestyle{fancy}
	\fancyhf{}	
	\fancyhead[EL]{\thepage}
	\fancyhead[ER]{\textsc{Beretta -- Cavaterra -- Fornoni -- Grasselli}} 
	\fancyhead[OL]{\textsc{Optimal control for Cahn--Hilliard inpainting}} 
	\fancyhead[OR]{\thepage}

	\renewcommand{\headrulewidth}{0pt}
	\setlength{\headheight}{5mm}

	\thispagestyle{empty} 

	\section{Introduction}
	Image inpainting refers to the process of filling in missing or damaged regions of an image by leveraging information from the surrounding areas. Over the past few decades, many different techniques have been developed to address this goal. These include, for example, PDE-driven inpainting methods starting from the works of \cite{BertalmioSapiro2000} and \cite{shen2002mathematical};
    see also \cite{schonlieb2015partial} and references therein.
    A notable PDE model is the Cahn--Hilliard inpainting method introduced by Bertozzi et al. in \cite{BEG2006, BEG2007}. Here the evolution is governed by the Cahn--Hilliard equation coupled with a fidelity term that keeps the reconstructed image close to the given one in the undamaged regions. Other methods are based on variational principles where one minimizes an energy that balances fidelity to the known data and smoothness or sparsity constraints in the missing regions (see, e.g., \cite{chan2005image, guillemot2013image, scherzer2010handbook}). Other methods include exemplar-based inpainting techniques based on the idea that part of an image can be reconstructed by simply pasting samples extracted from the known part, and were introduced in \cite{criminisi2004region}. More recently deep-learning methods have become popular leveraging convolutional neural networks (CNNs) or generative adversarial networks (GANs) to predict missing content (see the review \cite{elharrouss2020image}). 
    In this paper, we follow the successful mathematical model introduced by Bertozzi et al. based on the Cahn--Hilliard equation, which ensures smooth transitions and sharp interfaces between known and missing regions. The inclusion of a fidelity term allows the model to stay close to the original image, while diffuse-interface modelling effectively preserves the edges. Indeed, it received a lot of attention in recent years and was generalized and studied from many points of view \cite{BDSW2014, BS2015, BHS2009, CFM2016, CFM2017, GLS2018, NR2022, THM2020}. Let us describe the mathematical model in detail.
    Given $\Omega \subset \R^2$ an open, bounded, and sufficiently regular domain (i.e. the image support), we interpret a binary image as a function $g: \Omega \to \{0,1\}$, where $g = 1$ stands for black and $g = 0$ for white. 
	We call $D \subset \Omega$ the damaged region and $g: \Omega \setminus D \to \{0,1\}$ the given original image.
	Our objective is to recover a phase-field approximation $u: \Omega \to [0,1]$ of the original image in the whole domain $\Omega$. 
	Notice that $u$ is allowed to also take the values in-between $0$ and $1$, but ideally only in a small diffuse interface of width $\eps$. 
	Then, we recover the inpainted image $\ub = u(T): \Omega \to [0,1]$ by solving the following initial and boundary value problem:
	\begin{alignat}{2}
		& \partial_t u - \Delta w = \lambda(x) (g - u) \qquad && \hbox{in $\Omega \times (0,T)$}, \label{eq:u_bertozzi} \\
		& w = - \eps \Delta u + \frac{1}{\eps} W'(u) && \hbox{in $\Omega \times (0,T)$}, \label{eq:w_bertozzi} \\
		& \partial_{\n} u = \partial_{\n} w = 0 && \hbox{on $\partial \Omega \times (0,T)$}, \label{bc_bertozzi} \\
		& u(0) = u_0 && \hbox{in $\Omega$}, \label{ic_bertozzi}
	\end{alignat}     
	where $\vec{n}$ is the outward unit normal vector to $\partial\Omega$. 
    Here $T \gs 0$ is a fixed final time, possibly large enough to ensure that $\ub$ is close enough to an equilibrium. Conversely, $u_0$ is a given initial data, to be thought of as a starting guess for the inpainted image (for instance, the convolution of $g$ with a suitable regularising kernel).
    
	The whole inpainting process essentially depends on the fidelity coefficient $\lambda$  whose typical form is
	\begin{equation}
		\label{eq:lambda_expr}
		\lambda(x) = \begin{cases}
			0 \quad & \hbox{if $x \in D$}, \\
			\lambda_0 \quad & \hbox{if $x \in \Omega \setminus D$},
		\end{cases}
	\end{equation}
	where $\lambda_0 \gs \gs 1$ constant. 
	Indeed, in $\Omega \setminus D$, the fidelity term $\lambda_0(g - u)$ forces the solution to stay close to the original image $g$, while, inside $D$, $u$ is just a solution to the standard Cahn--Hilliard equation. 
	We recall that Cahn--Hilliard equation can be seen as an $H^{-1}$ gradient flow of the Ginzburg--Landau energy 
	\[ E_\eps(u) = \int_\Omega \left( \frac{\eps}{2} \abs{\nabla u}^2 + \frac{1}{\eps} W(u) \right) \, \de x, \]  
	where $\eps \gs 0$ and $W$ is a double-well potential with equal minima at $0$ and $1$, which, for the moment, can be taken as follows $W(u) = u^2 (u - 1)^2$. 
	It is well-known (see, for instance, \cite{MM1977}) that, as $\eps \to 0$, the energy $E_\eps$ acts as a diffuse interface approximation, in the sense of $\Gamma$-convergence, of the perimeter energy, which was used in previous models for image inpainting (see, e.g., \cite{ES2002}).  
	Notice that we are not imposing any explicit boundary condition on $\partial D$, since the idea is that the sharp variation of $\lambda$ through $\partial D$ should already implicitly guide the restoration of the image inside of $D$ by matching the behaviour of $g$ on $\partial D$. 
	Indeed, in \cite{BEG2007}, it was shown that, if $D \subset\subset \Omega$, $g \in \mathcal{C}^2(\Omega)$ and $\lambda_0 \to +\infty$, the stationary solutions to \eqref{eq:u_bertozzi}--\eqref{ic_bertozzi} in $D$ converge to the solution of the following boundary value problem
	\begin{alignat*}{2}
		& - \Delta \left( - \eps \Delta u + \frac{1}{\eps} W'(u) \right) = 0 \qquad && \hbox{in $D$}, \\
		& u = g && \hbox{on $\partial D$}, \\
		& \nabla u = \nabla g && \hbox{on $\partial D$}, 
	\end{alignat*}
	where the last two equations are exactly the matching conditions needed for a good reconstruction.
    
    
    To further stress the importance of the choice of large values for $\lambda_0$, in the Appendix we complement the above-mentioned result by showing that also the solution to the evolutionary system \eqref{eq:u_bertozzi}--\eqref{ic_bertozzi} in $\Omega \setminus D$ 
    with boundary conditions on $\partial D$ given by the inpainted image, converges to a stationary solution in $\Omega\setminus D$ if $\lambda_0$ is large enough (cf. Proposition \ref{prop:asympt}).
	
	One possible drawback of model \eqref{eq:u_bertozzi}--\eqref{ic_bertozzi} is that, with a polynomial double-well potential as $W(u) = u^2 (u - 1)^2$, one cannot guarantee that the solution $u$ takes its values in $[0,1]$. 
	However, this issue can be solved by reformulating the problem using a physically relevant potential, namely, the so-called logarithmic-type double-well (or Flory--Huggins) potential (see \cite{CFM2015,L2022,M2021}).
    In some situations, potentials of this kind were also shown to provide better inpainting results and faster-converging algorithms (see \cite{CFM2015}).
	For convenience, we now change most of the notation used so far in order to work in the physical interval $[-1,1]$, instead of $[0,1]$, since it is more common in the literature on Cahn--Hilliard-type equations. 
    We call $\phi$ this new phase-field variable in $[-1,1]$.
	Clearly, this does not change anything in applications, since one can easily switch between the two formulations by setting
    $\varphi = 2(u-\frac{1}{2})$. More precisely, we replace $W(u)$ by
	\begin{equation}
		\label{eq:logpotential}
		F(\phi) = \frac{\theta}{2} \left( (1-\phi) \log(1-\phi) + (1 + \phi) \log(1 + \phi) \right) - \frac{\theta_c}{2} \phi^2 \quad \hbox{for $\phi \in [-1,1]$},
	\end{equation}
	which, for constants $0 \ls \theta \ls \theta_c$, has equal minima at the points $\pm m_*$, with $\abs{m_*} \ls 1$ characterised by the following implicit relation
	\[ \frac{1}{m_*} \log \left( \frac{1 - m_*}{1 + m_*} \right) = \frac{2 \theta_c}{\theta}. \] 
	Consequently, we now consider binary images of the form 
	\[  f: \Omega \setminus D \to \{ -m_*, m_* \}, \hbox{where $\{f = m_*\}$ is the black region and $\{f = - m_*\}$ the white one,} \]
	for a fixed constant $m_* \in (0,1)$. 
	Therefore, the inpainted image is now a function $\phib: \Omega \to [-1,1]$, which should always be approximately equal to $\pm m_*$ up to a diffuse interface of width $\eps$ (see \cite[Rem.~2.1]{L2022}).
	Indeed, we find $\phib = \phi(T)$ by solving the following evolutionary problem:   
	\begin{alignat}{2}
		& \partial_t \phi - \Delta \mu = \lambda(x) (f - \phi) \qquad && \hbox{in $\Omega \times (0,T)$}, \label{eq:phi0} \\
		& \mu = - \eps \Delta \phi + \frac{1}{\eps} F'(\phi) && \hbox{in $\Omega \times (0,T)$}, \label{eq:mu0} \\
		& \partial_{\n} \phi = \partial_{\n} \mu = 0 && \hbox{on $\partial \Omega \times (0,T)$}, \label{eq:bc0} \\
		& \phi(0) = \phi_0 && \hbox{in $\Omega$}, \label{eq:ic0}
	\end{alignat}     
	for given $T \gs 0$ large enough and $\phi_0: \Omega \to [-1,1]$. 
	The main advantage of this reformulated problem is that, due to the blow-up of $F'(\phi)$ for $\phi \to \pm 1$, any energy bounded weak solution, which necessarily requires the integrability of $F'(\phi)$, is such that $\abs{\phi} \ls 1$ for almost any $(x,t) \in \Omega \times (0,T)$. 
	
	The goal of this work is to propose and study an optimal control problem aimed at finding a fidelity term that guarantees a faithful inpainting of the given damaged image $f$.
    On account of \eqref{eq:lambda_expr}, we seek more generally a function $\lambda$ of the form 
	\begin{equation}
		\label{eq:lambda_gen}
		\lambda(x) = \begin{cases}
			0 \quad & \hbox{if $x \in D$}, \\
			\lambda_0(x) \quad & \hbox{if $x \in \Omega \setminus D$},
		\end{cases}
	\end{equation}
	where $\lambda_0 \in L^\infty(\Omega \setminus D)$ is such that $\essinf \lambda_0 \ge \lambda_{\text{min}} \gs 0$. 
	We allow $\lambda_0$ to be dependent on $x \in \Omega$, as it could be beneficial to have different values for it. For instance, it might be useful to take $\lambda_0$ larger in regions where the image $f$ carries a higher level of detail, and it might be taken smaller in less detailed regions. Our optimal control problem can take care of this aspect. 
	Indeed, we fix $0 \ls \lambda_{\text{min}} \ls \lambda_{\text{max}}$ as a minimum and a maximum value for $\lambda_0$ and we formulate the following problem:
	
	\bigskip
	\noindent(CP) \textit{Minimise the cost functional}
	\begin{equation}
			\mathcal{J}(\phi, \lambda_0) = \frac{\alpha_1}{2} \int_0^T \int_{\Omega \setminus D} |\phi - f|^2 \,\de x \, \de t + \frac{\alpha_2}{2} \int_{\Omega \setminus D} |\phi(T) - f|^2 \,\de x + \frac{\beta}{2} \int_{\Omega \setminus D} \, \left(\frac{1}{\lambda_0}\right)^2 \,\de x,
	\end{equation}
	\textit{subject to the control constraints}
	\begin{equation} 
			\lambda_0 \in \Uad := \{ \lambda_0 \in L^{\infty}(\Omega \setminus D) \mid 0 < \lambda_{\text{min}} \le \lambda_0 \le \lambda_{\text{max}} \, \text{ a.e. in } \Omega \},  
	\end{equation}
	\textit{and to the state system \eqref{eq:phi0}-\eqref{eq:ic0}}.
	\bigskip
	
	\noindent 
    Here, $\alpha_1$, $\alpha_2$ and $\beta$ are non-negative parameters that can be tuned depending on the particular application.
    The first two tracking-type terms in the cost functional are there to keep the solution close to the original image $f$ in the undamaged domain, both during its evolution and at the final time.
    Conversely, note that the third term in $\mathcal{J}$ actually promotes large values of $\lambda_0$, as it is expected from the analysis carried out in \cite{BEG2007}.
	Also, observe that it does not hinder the convexity of the functional $\mathcal{J}$ on the admissible set $\Uad$.
    We conclude by recalling that a similar optimal control problem for the related Cahn--Hilliard--Oono equation
    is studied in \cite{CGRS2022}. There, the control is a space-time mass density and first-order optimality conditions are obtained.
	
    Let us describe our main results. Preliminarily, we report some results on the well-posedness of problem \eqref{eq:phi0}-\eqref{eq:ic0} which allow us to define a suitable control-to-state operator (see Section~\ref{WP}).
    Then, we show the existence of an optimal control in Section~\ref{EOC} and we establish first-order optimality conditions in Section~\ref{OC1}. Finally, we prove second-order optimality conditions in Section~\ref{OC2}.
    


	\section{Preliminaries and well-posedness results}
    \label{WP}

	We begin by introducing some notation that will be used throughout the paper. 
	We first recall that $\Omega \subset \R^2$ is an open and bounded domain with sufficiently regular boundary.
	Let $T > 0$ be the final time and denote $Q_t = \Omega \times (0, t)$ and $\Sigma_t = \partial \Omega \times (0, t)$, for any $t \in (0, T]$.
	For a (real) Banach space $X$, we denote by $\norm{\cdot}_X$ its norm, by $X^*$ its dual space and by $\duality{\cdot, \cdot}_X$ the duality pairing between $X$ and $X^*$. 
	If $X$ is a Hilbert space, we additionally denote by $(\cdot, \cdot)_X$ its inner product.
	For the standard Lebesgue and Sobolev spaces with $1 \le p \le \infty$ and $k \in \N$, we use the notation $\Lx p$ and $\Wx {k,p}$, with norms $\norm{\cdot}_{\Lx p}$ and $\norm{\cdot}_{\Wx {k,p}}$ respectively. 
	In the case $p=2$, we employ the notation $\Hx k := \Wx {k, 2}$ and denote its norm by $\norm{\cdot}_{\Hx k}$.
	For Bochner spaces, we use the standard notation $\LT p X$, for any $1 \le p \le \infty$ and any Banach space $X$.
	
	Next, we introduce the main Hilbert triplet that will be used in the following. 
	Namely, we set
	\[
		H := \Lx 2, \quad V := \Hx 1, \quad W := \{ u \in \Hx 2 \mid \partial_{\n} u = 0 \},
	\]
	and we recall that the following compact and dense embeddings hold:
	\[
		W \hookrightarrow V \hookrightarrow H \cong H^* \hookrightarrow V^* \hookrightarrow W^*.
	\]
	Moreover, owing to standard elliptic regularity estimates, we can equip $W$ with the equivalent norm
	\[ 
		\norm{w}^2_W := \norm{w}^2_H + \norm{\Delta w}^2_H, 
	\]
	where $\Delta$ denotes the Laplace operator.
	We also need to define the generalised mean value of a function $v$ as 
	\[
		v_\Omega := \frac{1}{\abs{\Omega}} \int_\Omega v \, \de x \hbox{ if $v \in \Lx 1$}
		\quad \hbox{and} \quad
		v_\Omega := \frac{1}{\abs{\Omega}} \duality{v, 1}_V \hbox{ if $v \in V^*$}, 
	\]
	$\vert \Omega\vert$ being the Lebesgue measure of $\Omega$, and introduce the zero-mean spaces
	\[
		V_0 := \{ v \in V \mid v_\Omega = 0 \}, \quad 
		V^*_0 := \{ v^* \in V^* \mid v^*_\Omega = 0 \}.
	\]
	Then, the Neumann--Laplace operator $\mathcal{N}: V_0 \to V^*_0$, defined as 
	\[
		\duality{\mathcal{N}w, v} := \int_\Omega \nabla w \cdot \nabla v \, \de x,
	\]
	is positive definite and self-adjoint. 
	In particular, by the Lax--Milgram theorem and Poincar\'e--Wirtinger's inequality, the inverse operator $\mathcal{N}^{-1}: V^*_0 \to V_0$ is well defined and $w = \mathcal{N}^{-1} g$ if $w \in V_0$ is the weak solution to the elliptic boundary value problem
	\[
		\begin{cases}
			- \Delta w = g \quad & \hbox{in $\Omega$,} \\
			\partial_{\n} w = 0 \quad & \hbox{on $\partial \Omega$.}
		\end{cases}
	\]
	Finally, we recall that by spectral theory the eigenfunctions of the operator $\mathcal{N}$ form an orthonormal Schauder basis in $H$, which is also orthogonal in $V$. 


\subsection{Weak and strong solutions}

    We introduce the following structural assumptions.
	\begin{enumerate}[font = \bfseries, label = A\arabic*., ref = \bf{A\arabic*}]
		\item\label{ass:omega} $\Omega \subset \R^2$ is an open bounded domain with boundary $\partial \Omega$ of class $\mathcal{C}^4$ and $D \subset \Omega$ is a bounded domain such that $0< \vert D\vert < \vert \Omega\vert$.
		\item\label{ass:F} The potential $F$ admits the splitting $F = F_0 + F_1$. We assume that $F_0: [-1,1] \to [0, +\infty)$ is a continuous strictly convex function such that
		\begin{equation}
			\label{hp:F0_general}
			\begin{split}
				& \quad F_0 \in \mathcal{C}^4(-1,1), 
				\quad F_0(0) = F'_0(0) = 0, \\
				& \qquad F_0(-s) = F_0(s) \quad \forall s \in (-1,1), \\
				& \lim_{s \to \pm 1} F_0'(s) = \pm \infty, 
				\quad \lim_{s \to \pm 1} F_0''(s) = + \infty.
			\end{split}
		\end{equation}
		We also suppose that there exist constants $c_0, C > 0$ such that for any $s \in (-1,1)$ the following bounds hold:
		\begin{equation}
			\label{hp:F0_bounds}
			  F_0''(s) \ge c_0 > 0, 
				\quad F_0''(s) \le e^{C (1 + \abs{F_0'(s)})}. 
		\end{equation}
		Instead, $F_1: [-1,1] \to \R$ is a $W^{4,\infty}$-perturbation such that $\abs{F_1''(s)} \le \theta$ for any $s \in [-1,1]$, for some $\theta > 0$.
		\item\label{ass:f} $f \in L^\infty(\Omega \setminus D)$ and there exists a positive constant $\eps_0 > 0$ such that $\norm{f}_{L^\infty(\Omega \setminus D)} \le 1 - \eps_0$. 
		For the purposes of our analysis, we can also assume that $f$ is extended by $0$ to a function in $\Lx\infty$, which we still call $f$ for simplicity. 
		\item\label{ass:lambda} We assume that the fidelity coefficient $\lambda \in \Lx\infty$ has the form given in \eqref{eq:lambda_gen}, i.e.
		\begin{equation*}
			\lambda(x) = \begin{cases}
				0 \quad & \hbox{if $x \in D$}, \\
				\lambda_0(x) \quad & \hbox{if $x \in \Omega \setminus D$},
			\end{cases}
		\end{equation*}
		and that there exists $\lambda_{\text{min}} > 0$ such that $\lambda(x) \ge \lambda_{\text{min}}$ in $D$.
		\item\label{ass:iniz} $\phi_0 \in V$ satisfies $\norm{\phi_0}_{\Lx\infty} \le 1$, $F_0(\phi_0) \in \Lx 1$ and $\abs{(\phi_0)_\Omega} < 1$. 
	\end{enumerate}
	In the sequel, $C > 0$ will stand for a generic positive constant, possibly changing from line to line, depending only on the fixed parameters introduced above.

	\begin{remark}
		Assumptions \ref{ass:omega}--\ref{ass:iniz} are the same used in \cite{L2022} to show the existence of strong solutions to \eqref{eq:phi0}--\eqref{eq:ic0}, as we will recall below. 
		Observe that \ref{ass:F} is satisfied by the logarithmic double-well potential \eqref{eq:logpotential} introduced above. 
		We also mention that \eqref{hp:F0_bounds} can be further weakened in view of the recent results regarding the validity of the separation property in two spatial dimensions (see \cite[Thm.~3.3]{GP2024}). Moreover, for \ref{ass:f}, we refer again to \cite[Rem.~2.1]{L2022}.
	\end{remark}

	We first recall the result contained in \cite[Proposition 2.5]{L2022} on the existence of weak solutions to \eqref{eq:phi0}--\eqref{eq:ic0} (cf. also \cite{M2021}).

	\begin{proposition}
		\label{prop:weaksols}
		Let \ref{ass:omega}--\ref{ass:iniz} hold.
		Then, there exists a pair $(\phi, \mu)$ to \eqref{eq:phi0}--\eqref{eq:ic0} enjoying the regularity
		\begin{align*}
			& \phi \in \HT 1 {V^*} \cap \LT \infty V \cap \LT 4 W \cap \LT 2 {\Wx{2,p}} \quad \hbox{for any $p \in [2,+\infty)$,} \\
			& \mu \in \LT 2 V, \quad F_0'(\phi) \in \LT 2 H, \\
			& \abs{\phi(x,t)} < 1 \quad \hbox{for a.e.~$(x,t) \in Q_T$,} \quad \abs{\phi_\Omega(t)} < 1 \quad \hbox{for any $t \in (0,T)$,}
		\end{align*}
		and such that 
		\begin{align*}
			& \duality{\partial_t \phi, \zeta}_V + (\nabla \mu, \nabla \phi)_H = (\lambda(x)(f - \phi), \zeta)_H \quad \hbox{for any $\zeta \in V$, a.e. in $(0,T)$},  \\
			& \mu = - \eps \Delta \phi + \frac{1}{\eps} F'(\phi) \quad \hbox{a.e.~in $Q_T$,}\\
            & \phi(0) = \phi_0 \quad \hbox{ in } V. 
		\end{align*}
		Such a pair is called (finite energy) weak solution to problem \eqref{eq:phi0}--\eqref{eq:ic0}.
	\end{proposition}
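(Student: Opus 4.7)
The plan is to follow the classical two-level approximation scheme used for Cahn--Hilliard equations with singular potentials (as in \cite{M2021,L2022}), adapted to the presence of the bounded reaction source $\lambda(f-\phi)$. First I would replace the singular part $F_0$ by a family of $\mathcal{C}^2$ approximations $F_{0,\delta}:\R\to[0,+\infty)$ obtained by extending $F_0|_{[-1+\delta,1-\delta]}$ quadratically outside that interval, which yields a globally Lipschitz $F_{0,\delta}'$ while preserving convexity and the bound $c_0$ on the second derivative from below. For fixed $\delta$ I would then build a Faedo--Galerkin approximation on the first $n$ eigenfunctions of the Neumann--Laplace operator $\mathcal{N}$, augmented by the constant mode to accommodate the non-conserved mean. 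The resulting ODE system is locally Lipschitz, so local existence is immediate, and globality will follow from the uniform energy estimates.

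The crucial step is to derive estimates uniform in both $\delta$ and $n$. Testing the equation for $\phi$ by $\mu$ and the one for $\mu$ by $\partial_t\phi$, and using integration by parts together with the homogeneous Neumann conditions, produces the identity
\begin{equation*}
\frac{\de}{\de t} E_\eps(\phi) + \norm{\nabla\mu}_H^2 = \bigl(\lambda(f-\phi),\mu\bigr)_H,
\end{equation*}
where the reaction term is bounded in $L^\infty(Q_T)$ because $|\phi|\le 1$ in the regularized setting and $\lambda,f\in L^\infty$. To turn $\norm{\nabla\mu}_H$ into a full $V$-norm I would test the $\phi$-equation by $1/|\Omega|$ to get the ODE $\frac{\de}{\de t}\phi_\Omega=\frac{1}{|\Omega|}(\lambda,f-\phi)_H$, and combine it with \ref{ass:f} and \ref{ass:iniz} (namely $\|f\|_\infty\le 1-\eps_0$ and $|(\phi_0)_\Omega|<1$) to keep $\phi_\Omega(t)$ strictly inside $(-1,1)$ on $[0,T]$, giving a bound on $\mu_\Omega$ via $\mu_\Omega=\frac{1}{\eps|\Omega|}\int F'(\phi)$ once Miranville's standard trick of testing with $F_{0,\delta}'(\phi)-(F_{0,\delta}'(\phi))_\Omega$ is applied. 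A Gr\"onwall argument then closes the energy estimate and simultaneously delivers $F_0'(\phi)\in \LT 2 H$ uniformly.

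Once the uniform bounds are in place, Aubin--Lions compactness gives strong $\LT 2 H$ convergence of a subsequence of $\phi^n_\delta$, while $\nabla\mu^n_\delta$ and $F_{0,\delta}'(\phi^n_\delta)$ converge weakly in $\LT 2 H$. Maximal monotonicity of $F_0'$, together with the pointwise blow-up at $\pm 1$, identifies the limit and forces $|\phi|<1$ almost everywhere. Elliptic regularity applied to $-\eps\Delta\phi=\mu-\frac{1}{\eps}F'(\phi)$ promotes $\phi$ into $\LT 2 W$; bootstrapping with the growth control \eqref{hp:F0_bounds} and the pointwise bound $|\phi|<1$ upgrades this to $\LT 2 {\Wx{2,p}}$ for every $p\in[2,\infty)$. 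Finally, an interpolation of $\LT\infty V$ with $\LT 2 W$, combined with $\partial_t\phi\in\spazio L 2 {V^*}$ gained from the equation, yields the $\LT 4 W$ regularity.

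The main obstacle, absent in the conservative Cahn--Hilliard setting, is the loss of mass conservation due to the reaction term: without it one recovers $\phi_\Omega\equiv(\phi_0)_\Omega$ for free, whereas here one must propagate the strict separation of the mean value in time. The key is precisely the fidelity assumption $\|f\|_\infty\le 1-\eps_0$ in \ref{ass:f}, which, together with the non-negativity of $\lambda$, provides a ``restoring force'' for the ODE satisfied by $\phi_\Omega$ on any finite interval $[0,T]$, and this is what makes the uniform $L^2$ estimate on $F_0'(\phi)$—and hence the whole existence argument—go through.
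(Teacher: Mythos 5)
The paper itself does not prove this proposition; it is quoted verbatim from \cite[Proposition 2.5]{L2022} (cf.\ also \cite{M2021}), and your outline follows the same standard route used there: regularise $F_0$, run a Faedo--Galerkin scheme, derive the energy identity, control the mean values, and pass to the limit via monotonicity and compactness. So the strategy is the right one. However, two steps of your sketch would fail as written. First, you assert that $\abs{\phi}\le 1$ ``in the regularized setting'': this is false, since after replacing $F_0$ by a quadratic extension $F_{0,\delta}$ the approximate solutions are no longer confined to $[-1,1]$. This is not just cosmetic, because your mechanism for keeping $\phi_\Omega(t)$ strictly inside $(-1,1)$ --- the ``restoring force'' in the ODE $\frac{\de}{\de t}\phi_\Omega=\frac{1}{\abs{\Omega}}\int_\Omega\lambda(f-\phi)$ --- relies on a favourable sign of the right-hand side as $\phi_\Omega\to\pm1$. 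No such sign is available: $\lambda$ vanishes on $D$, so $\phi$ may be large on $D$ (driving $\phi_\Omega$ towards $\pm1$) while $\int_\Omega\lambda(f-\phi)$ is completely uncontrolled in sign at the approximate level. The crude bound $\abs{\frac{\de}{\de t}\phi_\Omega}\le C(1+\norm{\phi}_H)$ only keeps $\phi_\Omega$ separated from $\pm1$ on a short time interval, whereas the uniform bound $\abs{\phi_\Omega(t)}\le 1-\kappa$ on all of $[0,T]$ is exactly what is needed to run the trick $\int F_0'(\phi)(\phi-\phi_\Omega)\ge c\norm{F_0'(\phi)}_{\Lx1}-C$ that controls $\mu_\Omega$ and closes the energy estimate. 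This is the genuinely delicate, non-conservative feature of the model, and your sketch identifies it but does not actually resolve it; it is the core content of the argument in \cite{L2022,M2021}.

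Second, the $\LT 4 W$ regularity does not follow from interpolating $\LT \infty V$ with $\LT 2 W$: that interpolation only yields $\spazio L 4 {\Hx{3/2}}$ (more generally $\spazio L {2/(s-1)} {\Hx s}$ for $1\le s\le 2$), not $\spazio L 4 {\Hx 2}$. The standard way to obtain it is to test \eqref{eq:mu0} with $-\Delta\phi$, use $F_0''\ge 0$ to discard the convex contribution, and deduce $\eps\norm{\Delta\phi}_H^2\le \norm{\nabla\mu}_H\norm{\nabla\phi}_H+\frac{\theta}{\eps}\norm{\nabla\phi}_H^2\le C(1+\norm{\nabla\mu}_H)$, which combined with $\nabla\mu\in\Lqt 2$ and $\nabla\phi\in\LT\infty H$ gives $\norm{\Delta\phi}_H^4\in\Lt 1$. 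The remaining ingredients of your plan (monotonicity to identify $F_0'(\phi)$ and force $\abs{\phi}<1$ a.e., elliptic bootstrap to $\LT 2 {\Wx{2,p}}$ via \eqref{hp:F0_bounds}) are sound.
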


	Next, we recall the result we will use in the sequel to define the control-to-state operator, namely, the well-posedness of problem  \eqref{eq:phi0}--\eqref{eq:ic0} with respect to strong solutions (cf. \cite[Theorem 3.2]{L2022}). We point out that a key step to obtain higher regularity for solutions to \eqref{eq:phi0}--\eqref{eq:ic0} is to establish the \emph{strict separation property}, that is, proving that $\phi$ stays uniformly away from $\pm 1$ in $[0,T]$ (see \cite{GP2024} and references therein). This basic property allows us to handle the singular potential as if it were a globally Lipschitz nonlinearity.

	\begin{theorem}
		\label{thm:strongsols}
		Let \ref{ass:omega}--\ref{ass:iniz} hold and assume that $\phi_0\in W$ and  $\mu_0 := - \Delta \phi_0 + F'(\phi_0)$ satisfies $\nabla \mu_0 \in H$. 
		Then, a weak solution $(\phi, \mu)$ given by Proposition \emph{\ref{prop:weaksols}} satisfies the further regularities 
		\begin{align*}
			& \phi \in \HT 1 V \cap \LT \infty {\Wx{2,p}} \cap \LT 2 {\Hx4} \quad \hbox{for any $p \ge 2$,} \\
			& \mu \in \LT \infty V \cap \LT 2 W, \\
			& F'(\phi), F''(\phi) \in \LT \infty {\Lx p} \quad \hbox{for any $p \ge 2$.} 
		\end{align*}
        Moreover, the strict separation property holds, namely there exists $\delta > 0$, depending only on the data of the system, such that 
        \begin{equation}
			\label{eq:separation}
			\norm{\phi}_{C^0(\bar{Q_T})} \le 1 - \delta,
		\end{equation}
        Finally, such a solution is \emph{unique}.
	\end{theorem}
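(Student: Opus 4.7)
Since Proposition~\ref{prop:weaksols} already supplies a weak solution $(\phi,\mu)$, my strategy is to derive additional a priori bounds on this solution (rigorously, on a Yosida/Galerkin approximation of $F_0'$ and then passing to the limit), to upgrade the regularity via elliptic estimates, to establish the strict separation property, and finally to deduce uniqueness. The stronger hypotheses $\phi_0 \in W$ and $\nabla \mu_0 \in H$ are designed precisely to make the compatibility at $t=0$ work: in particular $\partial_t\phi(0) = \Delta \mu_0 - \lambda(\phi_0-f) \in V^*$, which is the correct initial datum for the higher-order estimate.

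\textbf{Higher-order regularity.} I would formally differentiate $\partial_t\phi - \Delta\mu = \lambda(f-\phi)$ in time and test with $\mathcal{N}^{-1}(\partial_t\phi-(\partial_t\phi)_\Omega)$; the spatial mean $(\partial_t\phi)_\Omega$ is directly controlled from the PDE itself. Using $\partial_t\mu = -\eps\Delta\partial_t\phi + \eps^{-1}F''(\phi)\partial_t\phi$ together with the convexity bound $F_0''\ge c_0$ and $\abs{F_1''}\le\theta$, one obtains
\begin{equation*}
    \tfrac{1}{2}\ddt \norm{\partial_t\phi-(\partial_t\phi)_\Omega}_{V_0^*}^2 + \eps\norm{\nabla\partial_t\phi}_H^2 + \tfrac{c_0}{\eps}\norm{\partial_t\phi}_H^2 \le C\bigl(\norm{\partial_t\phi-(\partial_t\phi)_\Omega}_{V_0^*}^2 + 1\bigr).
\end{equation*}
Gronwall's lemma then yields $\partial_t\phi \in \LT\infty{V^*}\cap \LT 2 V$ and, via $-\Delta\mu = \lambda(f-\phi)-\partial_t\phi$ plus a direct bound on $\mu_\Omega$ from the definition of $\mu$, also $\mu\in \LT\infty V$. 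Testing $-\eps\Delta\phi + \eps^{-1}F_0'(\phi) = \mu - \eps^{-1}F_1'(\phi)$ with $F_0'(\phi)\abs{F_0'(\phi)}^{p-2}$ and exploiting the monotonicity of $F_0'$ gives uniform $\LT\infty{\Lx p}$ bounds on $F'(\phi)$ and $F''(\phi)$ for every $p<\infty$, and elliptic regularity then promotes $\phi$ to $\LT\infty{\Wx{2,p}}$.

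\textbf{Strict separation property.} This is the main obstacle. The growth bound $F_0''(s)\le e^{C(1+\abs{F_0'(s)})}$ in \ref{ass:F} is precisely calibrated to enable, in the two-dimensional setting, an Alikakos--Moser-type iteration on $F_0'(\phi)$: testing the $\mu$-equation against $(F_0'(\phi)-k)_+^{2p-1}$ for $k$ sufficiently large, invoking the Sobolev embedding $V \hookrightarrow \Lx q$ for every $q<\infty$, and iterating optimally in $p$ propagates $\Lx p$-bounds into a uniform bound $\norm{F_0'(\phi)}_{\Lqt\infty}\le K$. Since $F_0'(s)\to\pm\infty$ as $s\to\pm 1$, this is equivalent to the existence of $\delta>0$, depending only on the data, such that $\norm{\phi}_{C^0(\bar{Q_T})}\le 1-\delta$. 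Once separation holds, $F'$ and $F''$ are globally bounded and Lipschitz on $[-1+\delta,1-\delta]$, and a standard bootstrap on $-\Delta\mu=\lambda(f-\phi)-\partial_t\phi$ delivers $\mu\in \LT 2 W$, whence $\phi\in \LT 2 {\Hx 4}$ by applying the Laplacian to the $\mu$-equation.

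\textbf{Uniqueness.} For two such solutions $(\phi_i,\mu_i)$, set $\phi=\phi_1-\phi_2$, $\mu=\mu_1-\mu_2$. Integrating the equation for $\phi$ in space produces an ODE for $\phi_\Omega(t)$ driven by $\int_\Omega\lambda\phi$, which is controlled by $\norm{\phi}_H$. Testing the mean-zero part with $\mathcal{N}^{-1}(\phi-\phi_\Omega)$, using the Lipschitz character of $F'$ on the separation interval $[-1+\delta,1-\delta]$ and $\lambda\in \Lx\infty$, one obtains a differential inequality closing via Gronwall's lemma, and hence $\phi\equiv 0$, $\mu\equiv 0$. Everything hinges on the separation property, whose proof is the only genuinely delicate step; the rest is standard Cahn--Hilliard bootstrap.
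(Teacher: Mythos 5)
The paper does not contain its own proof of this theorem: it is quoted from \cite[Theorem 3.2]{L2022}, with Remark \ref{rmk:regularity} invoking \cite{GP2022} to push the strict separation property down to $t=0$ under the strengthened hypotheses on $\phi_0$ and $\mu_0$. So your proposal can only be measured against the cited literature, and there your outline matches the standard strategy: the time-differentiated estimate tested with $\mathcal{N}^{-1}$ of the mean-free part of $\partial_t\phi$ (with the mean controlled directly from the source term), the $L^p$ bounds on $F_0'(\phi)$ from monotonicity, elliptic bootstrap to $W^{2,p}$ and $H^4$, and uniqueness via $\mathcal{N}^{-1}$ testing with separate control of the (non-conserved) spatial mean. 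These parts are sound.

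The one thin spot is precisely the step you flag as delicate, the separation property, and your sketch of it does not obviously close. The known two-dimensional argument does not run a truncation/Moser iteration directly on $F_0'(\phi)$: a De Giorgi--Moser scheme on the semilinear equation for $\phi$ would need quantitative control of $F_0''(\phi)$ at each stage (since $\nabla F_0'(\phi)=F_0''(\phi)\nabla\phi$ appears upon integration by parts), and that control is exactly what is not yet available. The actual route is: test $-\eps\Delta\phi+\eps^{-1}F_0'(\phi)=\mu-\eps^{-1}F_1'(\phi)$ with $\abs{F_0'(\phi)}^{p-2}F_0'(\phi)$, drop the nonnegative term $(p-1)\int F_0''(\phi)\abs{\nabla\phi}^2\abs{F_0'(\phi)}^{p-2}$, and track the $p$-dependence of the two-dimensional Sobolev constant to get $\norm{F_0'(\phi)}_{\Lx p}\le C\sqrt{p}$ uniformly in $t$; summing the exponential series yields $e^{\kappa\abs{F_0'(\phi)}^2}\in L^1(\Omega)$, so the growth condition $F_0''\le e^{C(1+\abs{F_0'})}$ in \ref{ass:F} places $F_0''(\phi)$ in $L^\infty(0,T;\Lx q)$ for every $q<\infty$; combined with $\nabla\phi\in L^\infty(0,T;\Lx r)$ for all $r$, this gives $F_0'(\phi)\in L^\infty(0,T;\Wx{1,q})$ for some $q>2$, hence $F_0'(\phi)\in L^\infty(Q_T)$ and separation follows from the blow-up of $F_0'$ at $\pm1$. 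Your proposal names the right hypothesis but inverts the logical order in which it is used; as written, the iteration on $(F_0'(\phi)-k)_+^{2p-1}$ is not justified. You should also make explicit (as Remark \ref{rmk:regularity} does) that obtaining $\delta$ on all of $[0,T]$, rather than on $[\sigma,T]$, uses the local-in-time separation near $t=0$ guaranteed by the regularity of $\phi_0$ and $\mu_0$.
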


	\begin{remark}
		\label{rmk:regularity}
        In the original \cite[Theorem 3.2]{L2022} only the \emph{instantaneous} strict separation property was shown, namely that for any $\sigma > 0$ there exists $\delta > 0$ such that 
		\begin{align*}
			\abs{\phi(x,t)} \le 1 - \delta \quad \hbox{for any $(x,t) \in \bar{\Omega} \times [\sigma, T]$.} 
		\end{align*}
		However, we observe that our assumptions on $\phi_0$ and $\mu_0$ already entail the existence of a $0 < \delta_0 < 1$ such that $\phi$ is strictly separated locally in time (see \cite[Rem.~3.9]{GP2022} and references therein). Thus the strict separation property holds up to time $t = 0$ (see \cite[Rem.~3.11]{GP2022}), as shown in \eqref{eq:separation} for some $0 < \delta < \delta_0$. 
        In particular, we stress that $\delta_0$ depends only on the initial energy, therefore it can be chosen uniformly if the energy of the initial guess $\phi_0$ is uniformly bounded.
    
	\end{remark}

	The analysis conducted in \cite{L2022} shows, in particular, that the solutions to \eqref{eq:phi0}--\eqref{eq:ic0} regularise in finite time (see \cite[Thm.~3.1]{L2022}). 
	Hence, for the purposes of our analysis, we can assume to start with an initial guess $\phi_0$ satisfying 
	\begin{enumerate}[font = \bfseries, label = A\arabic*., ref = \bf{A\arabic*}]
		\setcounter{enumi}{5}
		\item\label{ass:initial2} $\phi_0 \in W$ and $\mu_0\in V$.
	\end{enumerate}	
	In this way, we can use all the global regularity properties shown in Theorem \ref{thm:strongsols} and the separation property \eqref{eq:separation}, namely, we have a unique strong solution. 
	We stress that \ref{ass:initial2} is not restrictive for the inpainting algorithm since, even if the initial guess $\phi_0$ is less regular, we can take as a ``new'' initial guess the more regular $\tilde{\phi}_0 = \phi(\sigma)$, where $\sigma$ could be any positive time as in Remark \ref{rmk:regularity}. 

	\begin{remark}
		\label{rmk:Fderivatives}
		Observe that, thanks to \ref{ass:initial2}  and Remark \ref{rmk:regularity}, there exists a constant $C > 0$ such that 
		\begin{equation}
        \label{derbounds}
			\norm{F^{(i)}(\phi)}_{C^0(\bar{Q_T})} \le C \quad \hbox{for any $i = 0, \dots, 4$.}
		\end{equation}
	\end{remark}

    \subsection{Continuous dependence on the fidelity coefficient}

    For the subsequent analysis, we fix $\eps = 1$ in \eqref{eq:phi0}--\eqref{eq:ic0} as its particular value does not influence our results. 
	Hence, from now on we consider the following initial and boundary value problem:
	\begin{alignat}{2}
		& \partial_t \phi - \Delta \mu = \lambda(x) (f - \phi) \qquad && \hbox{in $Q_T$}, \label{eq:phi} \\
		& \mu = - \Delta \phi + F'(\phi) && \hbox{in $Q_T$}, \label{eq:mu} \\
		& \partial_{\n} \phi = \partial_{\n} \mu = 0 && \hbox{on $\Sigma_T$}, \label{eq:bc} \\
		& \phi(0) = \phi_0 && \hbox{in $\Omega$}. \label{eq:ic}
	\end{alignat}
	In order to study the optimal control problem (CP) in the next section, we need to define a control-to-state operator. Along with the existence and uniqueness of a strong solution
    given by Theorem \ref{thm:strongsols} and the validity of the separation property \eqref{eq:separation}, we also need to establish 
	a continuous dependence of the (strong) solution itself on the fidelity coefficient $\lambda$.  

	\begin{theorem}
		\label{thm:contdep}
		Assume that \ref{ass:omega}--\ref{ass:f} and \ref{ass:initial2} hold. Let $\lambda_1$, $\lambda_2$ be two fidelity coefficients satisfying \ref{ass:lambda} and let $(\phi_1, \mu_1)$, $(\phi_2, \mu_2)$ be the corresponding (strong) solutions to \eqref{eq:phi}--\eqref{eq:ic}. Then, there exists a constant $K > 0$, depending only on the parameters of the system and on $\lambda_1$, $\lambda_2$, but not on their difference, such that the following continuous dependence estimate holds:
		\begin{equation}
			\label{eq:contdep:result} 
			\norm{\phi_1 - \phi_2}^2_{\HT 1 H \cap \LT \infty W \cap \LT 2 {\Hx4}} + \norm{\mu_1 - \mu_2}^2_{\LT 2 W} 
			\le K \norm{\lambda_1 - \lambda_2}^2_H.
		\end{equation}
	\end{theorem}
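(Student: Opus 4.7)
My plan is to subtract the two systems for $(\phi_1,\mu_1)$ and $(\phi_2,\mu_2)$. Setting $\hat\phi := \phi_1 - \phi_2$, $\hat\mu := \mu_1 - \mu_2$, $\hat\lambda := \lambda_1 - \lambda_2$, and rewriting $\lambda_1(f-\phi_1) - \lambda_2(f-\phi_2) = \hat\lambda(f-\phi_2) - \lambda_1 \hat\phi$, the differences solve
\[
\partial_t \hat\phi - \Delta \hat\mu = h - \lambda_1 \hat\phi, \qquad \hat\mu = -\Delta \hat\phi + F'(\phi_1) - F'(\phi_2),
\]
with $\partial_{\vec n}\hat\phi = \partial_{\vec n}\hat\mu = 0$, $\hat\phi(0)=0$, and source $h := \hat\lambda(f-\phi_2)$ satisfying $\|h(\cdot,t)\|_H \le C\|\hat\lambda\|_H$ uniformly in $t$ by \ref{ass:f}. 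The decisive structural tool is the strict separation property of Theorem \ref{thm:strongsols} applied to both $\phi_i$: by Remark \ref{rmk:Fderivatives}, all $F^{(k)}(\phi_i)$, $k=0,\ldots,4$, are uniformly bounded, so that $F'(\phi_1)-F'(\phi_2)$ and its spatial derivatives behave as Lipschitz increments of $\hat\phi$. In particular, the factorisation $F'(\phi_1)-F'(\phi_2) = a(x,t)\,\hat\phi$ with $a := \int_0^1 F''(s\phi_1+(1-s)\phi_2)\,\de s \in L^\infty(Q_T)$ reduces the system essentially to a linear fourth-order parabolic equation for $\hat\phi$ with bounded coefficients and bounded source.

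\noindent\textbf{Low-order estimate.} I first aim at $\|\hat\phi\|_{L^\infty(0,T;H)\cap L^2(0,T;V)} \le C\|\hat\lambda\|_H$. Integrating the PDE in space gives an ODE for the mean $\hat\phi_\Omega$ which, thanks to $\hat\phi(0)=0$, is controlled by Gronwall in terms of $\|\hat\lambda\|_H$ and $\|\hat\phi\|_{L^2 H}$. For the zero-mean part $\psi := \hat\phi - \hat\phi_\Omega$ I would test the centred equation by $\mathcal{N}^{-1}\psi$, using the standard identity $-(\Delta \hat\mu,\mathcal{N}^{-1}\psi) = (\hat\mu,\psi)$; expanding $(\hat\mu,\psi)$ via the expression of $\hat\mu$, the monotonicity of $F_0'$ given by the splitting $F=F_0+F_1$ in \ref{ass:F} contributes a non-negative term, while the Lipschitz bound on $F_1'$ and Young--Poincar\'e--Wirtinger inequalities absorb the remainder. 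A Gronwall argument then closes the low-order bound.

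\noindent\textbf{Higher-order bootstrap and main obstacle.} The target regularities $\hat\phi \in H^1(0,T;H)\cap L^\infty(0,T;W)\cap L^2(0,T;H^4)$ and $\hat\mu \in L^2(0,T;W)$ would follow by two successive bootstrap waves. First, I would test the PDE by $\partial_t \hat\phi$, treating the nonlinearity via the identity
\[
(\partial_t \hat\phi,\, F'(\phi_1)-F'(\phi_2)) = \tfrac12 \tfrac{\de}{\de t}\!\int_\Omega a\,\hat\phi^2\,\de x - \tfrac12 \int_\Omega (\partial_t a)\,\hat\phi^2\,\de x,
\]
where $\partial_t a \in L^2 H$ by $F'''\in L^\infty$ and $\partial_t \phi_i \in L^2 V$ from Theorem \ref{thm:strongsols}. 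Combined with the biharmonic-type integration by parts based on $\partial_{\vec n}\Delta \hat\phi = -\partial_{\vec n}(F'(\phi_1)-F'(\phi_2)) = 0$ (inherited from $\partial_{\vec n}\hat\mu = \partial_{\vec n}\phi_i = 0$), this yields $\partial_t \hat\phi \in L^2 H$, $\Delta \hat\phi \in L^\infty H$ and, via Neumann elliptic regularity, $\hat\phi \in L^\infty W$. Second, the PDE rewritten as $\Delta \hat\mu = \partial_t \hat\phi - h + \lambda_1 \hat\phi \in L^2 H$ together with $\partial_{\vec n}\hat\mu = 0$ upgrades $\hat\mu$ to $L^2 W$ by elliptic regularity; a final inversion of $-\Delta \hat\phi = \hat\mu - (F'(\phi_1)-F'(\phi_2))$, whose right-hand side now sits in $L^2 H^2$ thanks to the $L^\infty W$ bound on $\hat\phi$ and two-dimensional Sobolev embeddings, gives $\hat\phi \in L^2 H^4$. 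The main obstacle is the mismatch between the very low spatial regularity of the coefficient $\lambda_1\in L^\infty$, which forbids any gain of smoothness by differentiating the source $h - \lambda_1 \hat\phi$, and the high Sobolev regularity required for $\hat\phi$ and $\hat\mu$: every higher-order gain must be extracted from the elliptic structure of the $\hat\mu$-equation, for which the effective Lipschitz behaviour of $F'(\phi_1)-F'(\phi_2)$ guaranteed by the strict separation property is indispensable.
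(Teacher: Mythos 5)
Your proposal is correct in substance and, for the decisive higher-order part, follows essentially the same route as the paper: testing the difference equation with $\partial_t(\phi_1-\phi_2)$, exploiting $\partial_{\n}\Delta(\phi_1-\phi_2)=\partial_{\n}(F'(\phi_1)-F'(\phi_2))=0$ to extract $\frac{\de}{\de t}\norm{\Delta(\phi_1-\phi_2)}_H^2$, controlling $\Delta(F'(\phi_1)-F'(\phi_2))$ through the separation property and the uniform bounds on $F^{(k)}(\phi_i)$, and then recovering $\mu_1-\mu_2\in\LT2W$ and $\phi_1-\phi_2\in\LT2{\Hx4}$ by comparison and Neumann elliptic regularity. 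Where you genuinely diverge is the low-order estimate: the paper cross-tests \eqref{eq:phi2} with $\mu$ and \eqref{eq:mu2} with $\partial_t\phi-\Delta\phi$, using the convexity of $F_0$ via $\int_\Omega(F_0'(\phi_1)-F_0'(\phi_2))\phi\ge c_0\norm{\phi}_H^2$ to obtain $\LT\infty V\cap\LT2W$ directly, whereas you test with $\mathcal{N}^{-1}$ of the zero-mean part. Your route is more classical and lighter, but note it yields $\LT\infty{V^*}\cap\LT2V$ rather than the $\LT\infty H\cap\LT2V$ you claim; this is harmless, since the bootstrap Gronwall only needs $\norm{\phi_1-\phi_2}_{\LT2H}\le C\norm{\lambda_1-\lambda_2}_H$ (which you do get from the monotone term) and the missing $\LT\infty H$ bound is recoverable a posteriori by interpolating $\norm{v-v_\Omega}_H\le\norm{v-v_\Omega}_{V^*}^{2/3}\norm{\Delta v}_H^{1/3}$ once $\Delta(\phi_1-\phi_2)\in\LT\infty H$ is in hand. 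One further small imprecision: the identity $(\partial_t\hat\phi,F'(\phi_1)-F'(\phi_2))=\tfrac12\tfrac{\de}{\de t}\int_\Omega a\,\hat\phi^2\,\de x-\tfrac12\int_\Omega(\partial_t a)\hat\phi^2\,\de x$ is the right tool for the term arising when the $\mu$-equation is tested with $\partial_t\hat\phi$ (the paper's first estimate), not for the term $-(\Delta(F'(\phi_1)-F'(\phi_2)),\partial_t\hat\phi)_H$ that actually appears in the $\Delta$-level estimate; for the latter you must expand the Laplacian of the nonlinear increment as the paper does. Since you list both ingredients, the argument closes, but the assembly should be tightened.
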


	\begin{proof}
        {\allowdisplaybreaks
		We argue as in the proof of \cite[Theorem 3.2]{L2022}. Let us set
		  
		\[
			\phi := \phi_1 - \phi_2, \quad \mu := \mu_1 - \mu_2, \quad \lambda := \lambda_1 - \lambda_2.
		\]
		Then, it is easy to observe that
		\begin{alignat}{2}
			& \partial_t \phi - \Delta \mu = \lambda (f - \phi_2) - \lambda_1 \phi \quad && \hbox{in $Q_T$,} \label{eq:phi2} \\
			& \mu = - \Delta \phi + (F'(\phi_1) - F'(\phi_2)) \quad && \hbox{in $Q_T$,} \label{eq:mu2} \\
			& \partial_{\n} \phi = \partial_{\n} \mu = 0 \quad && \hbox{on $\Sigma_T$,} \label{eq:bc2} \\
			& \phi(0) = 0 \quad && \hbox{in $\Omega$.} \label{eq:ic2}
		\end{alignat}
		\textsc{First estimate.}
		We multiply equation \eqref{eq:phi2} by $\mu$  and equation \eqref{eq:mu2} by $\partial_t \phi - \Delta \phi$, we integrate over $\Omega$ and we add the obtained equations together. This gives
		\begin{equation}
			\label{eq:contdep:est1}
			\begin{split}
			& \mezzo \ddt \norm{\nabla \phi}^2_H + \norm{\nabla \mu}^2_H + \norm{\Delta \phi}^2_H + (F'(\phi_1) - F'(\phi_2), \partial_t \phi)_H \\
			& \quad = (\lambda (f - \phi_2), \mu)_H - (\lambda_1 \phi, \mu)_H - (\mu, \Delta \phi)_H + (F'(\phi_1) - F'(\phi_2), \Delta \phi)_H. 
			\end{split}
		\end{equation}
		We start by estimating the term on the left-hand side. 
		Using hypothesis \ref{ass:F} and Leibniz's rule on the time-derivative, we infer that 
		\begin{align*}
			& (F'(\phi_1) - F'(\phi_2), \partial_t \phi)_H \\
			& \quad = (F_0'(\phi_1) - F_0'(\phi_2), \partial_t \phi)_H
			+ (F_1'(\phi_1) - F_1'(\phi_2), \partial_t \phi)_H \\
			& \quad = \ddt (F_0'(\phi_1) - F_0'(\phi_2),  \phi)_H 
			- (F_0''(\phi_1) \partial_t \phi_1 - F_0''(\phi_2) \partial_t \phi_2, \phi)_H 
			+ (F_1'(\phi_1) - F_1'(\phi_2), \partial_t \phi)_H \\
			& \quad = \ddt (F_0'(\phi_1) - F_0'(\phi_2),  \phi)_H 
			- (F_0''(\phi_1) \partial_t \phi, \phi)_H \\
			& \qquad - ( (F_0'(\phi_1 ) - F_0'(\phi_2)) \partial_t \phi_2, \phi )_H 
			+ (F_1'(\phi_1) - F_1'(\phi_2), \partial_t \phi)_H \\
			& \quad = \ddt (F_0'(\phi_1) - F_0'(\phi_2),  \phi)_H - I_1 - I_2 + I_3.
		\end{align*}
		To estimate $I_1$, we first rewrite $\partial_t \phi$ by exploiting equation \eqref{eq:phi2}.
		Then, we use integration by parts, H\"older's inequality, Young's inequality and Sobolev embeddings, together with the regularity properties guaranteed by Theorem \ref{thm:strongsols}, \eqref{eq:separation} and \eqref{derbounds}, to deduce that
		\begin{align*}
			I_1 & = \int_\Omega F_0''(\phi_1) \partial_t \phi \, \phi \, \de x \\
			& = \int_\Omega F_0''(\phi_1) \Delta \mu \, \phi \, \de x
			+ \int_\Omega F_0''(\phi_1) \lambda f \phi \, \de x 
			- \int_\Omega F_0''(\phi_1) \lambda_1 \phi^2 \, \de x 
			- \int_\Omega F_0''(\phi_1) \lambda \phi_2 \, \phi \, \de x \\
			& = - \int_\Omega F_0'''(\phi_1) \nabla \phi_1 \cdot \nabla \mu \, \phi \, \de x 
			- \int_\Omega F_0''(\phi_1) \nabla \phi \cdot \nabla \mu \, \de x \\
			& \quad + \int_\Omega F_0''(\phi_1) \lambda f \phi \, \de x 
			- \int_\Omega F_0''(\phi_1) \lambda_1 \phi^2 \, \de x 
			- \int_\Omega F_0''(\phi_1) \lambda \phi_2 \, \phi \, \de x \\
			& \le \norm{F_0'''(\phi_1)}_{\Lx\infty} \norm{\nabla \phi_1}_{\Lx 4} \norm{\nabla \mu}_H \norm{\phi}_{\Lx 4}
			+ \norm{F_0''(\phi_1)}_{\Lx\infty} \norm{\nabla \phi}_H \norm{\nabla \mu}_H \\
			& \quad + \norm{F_0''(\phi_1)}_{\Lx\infty} \left( \norm{\lambda}_H \norm{f}_{\Lx\infty} \norm{\phi}_H 
			+ \norm{\lambda_1}_{\Lx\infty} \norm{\phi}^2_H
			+ \norm{\lambda}_H \norm{\phi_2}_{\Lx\infty} \norm{\phi}_H \right) \\
			& \le C \norm{\phi_1}_{\Hx 2} \norm{\nabla \mu}_H \norm{\phi}_V 
			+ C \norm{\nabla \phi}_H \norm{\nabla \mu}_H \\
			& \quad + C \norm{\lambda}_H \norm{\phi}_H 
			+ C \norm{\phi}^2_H 
			+ C \norm{\lambda}_H \norm{\phi}_H \\
			& \le \frac14 \norm{\nabla \mu}^2_H
			+ C \left( 1 + \norm{\phi_1}^2_{\Hx2} \right) \norm{\phi}^2_V
			+ C \norm{\lambda}^2_H. 
		\end{align*}
		Here we used the Sobolev embeddings $V \hookrightarrow \Lx4$ and $\Hx2 \hookrightarrow \Wx{1,4}$. 
		Moreover, by Theorem \ref{thm:strongsols}, we also know that $\norm{\phi_1}^2_{\Hx2} \in \Lt\infty$ uniformly.
		In order to estimate $I_2$, we use H\"older's inequality, Young's inequality and Sobolev embeddings, together with the separation property \eqref{eq:separation}, the local Lipschitz continuity of $F_0'$ and Remark \ref{rmk:Fderivatives}. 
		Hence, we obtain that 
		\begin{align*}
			I_2 & = \int_\Omega (F_0'(\phi_1) - F_0'(\phi_2)) \partial_t \phi_2 \, \phi \, \de x 
			\le \norm{F_0'(\phi_1) - F_0'(\phi_2)}_{\Lx4} \norm{\partial_t \phi_2}_H \norm{\phi}_{\Lx4} \\
			& \le C \norm{\phi}^2_{\Lx4} \norm{\partial_t \phi_2}_H 
			\le C \norm{\partial_t \phi_2}_H \norm{\phi}^2_V,  
		\end{align*}
		where $\norm{\partial_t \phi_2}_H \in \Lt2$ thanks to Theorem \ref{thm:strongsols}.
		To estimate $I_3$, we use again \eqref{eq:phi2} to replace $\partial_t \phi$. Thus we get
		\begin{align*}
			I_3 & = \int_\Omega (F_1'(\phi_1) - F_1'(\phi_2)) \partial_t \phi \, \de x \\
			& = \int_\Omega (F_1'(\phi_1) - F_1'(\phi_2)) (\Delta \mu + \lambda f - \lambda_1 \phi - \lambda \phi_2) \, \de x \\
			& = - \int_\Omega F_1''(\phi_1) \nabla \phi \cdot \nabla \mu \, \de x
			- \int_\Omega (F_1''(\phi_1) - F_1''(\phi_2)) \nabla \phi_2 \cdot \nabla \mu \, \de x \\
			& \quad + \int_\Omega (F_1'(\phi_1) - F_1'(\phi_2)) (\lambda f - \lambda_1 \phi - \lambda \phi_2) \, \de x \\
			& \le \norm{F''_1(\phi_1)}_{\Lx\infty} \norm{\nabla \phi}_H \norm{\nabla \mu}_H 
			+ \norm{F_1''(\phi_1) - F_1''(\phi_2)}_{\Lx4} \norm{\nabla \phi_2}_{\Lx4} \norm{\nabla \mu}_{\Lx4} \\
			& \qquad + \norm{F_1''(\phi_1) - F_1''(\phi_2)}_{H} \left( \norm{\lambda}_H \norm{f}_{\Lx\infty} + \norm{\lambda_1}_{\Lx\infty} \norm{\phi}_H + \norm{\lambda}_H \norm{\phi_2}_{\Lx\infty} \right) \\
			& \le C \norm{\nabla \phi}_H \norm{\nabla \mu}_H 
			+ C \norm{\phi}_V \norm{\phi_2}_{\Hx2} \norm{\nabla \mu}_{H} + C \norm{\phi}_H \left( \norm{\lambda}_H + \norm{\phi}_H \right) \\
			& \le \frac14 \norm{\nabla \mu}^2_H + C \left(1 + \norm{\phi_2}^2_{\Hx2} \right) \norm{\phi}^2_V + C \norm{\lambda}^2_H,
		\end{align*}
		where $\norm{\phi_2}^2_{\Hx2} \in \Lt\infty$ owing to Theorem \ref{thm:strongsols}.
		We now estimate all the remaining terms on the right-hand side of \eqref{eq:contdep:est1}. 
		Here we use a combination of Cauchy--Schwarz and Young's inequalities and as well as the separation property \eqref{eq:separation}, the local Lipschitz continuity of $F'$, and \eqref{derbounds}. 
		We will also replace $\mu$ by exploiting equation \eqref{eq:mu2} for the first three terms.
		Summing up, we have   
		\begin{align*}
			& (\lambda f, \mu)_H = (\lambda f, - \Delta \phi + (F'(\phi_1) - F'(\phi_2)))_H \\
			& \qquad \le \norm{\lambda}_H \norm{f}_{\Lx\infty} \norm{\Delta \phi}_H + \norm{\lambda}_H \norm{f}_{\Lx\infty} \norm{F'(\phi_1) - F'(\phi_2)}_H \\
			& \qquad \le \frac18 \norm{\Delta \phi}^2_H + C \norm{\phi}^2_H + C \norm{\lambda}^2_H, \\
			& (\lambda \phi_2, \mu)_H = (\lambda f, - \Delta \phi + (F'(\phi_1) - F'(\phi_2)))_H \\
			& \qquad \le \norm{\lambda}_H \norm{\phi_2}_{\Lx\infty} \norm{\Delta \phi}_H + \norm{\lambda}_H \norm{\phi_2}_{\Lx\infty} \norm{F'(\phi_1) - F'(\phi_2)}_H \\
			& \qquad \le \frac18 \norm{\Delta \phi}^2_H + C \norm{\phi}^2_H + C \norm{\lambda}^2_H, \\  
			& (\lambda_1 \phi, \mu)_H  = (\lambda_1 \phi, - \Delta \phi + (F'(\phi_1) - F'(\phi_2)))_H \\
			& \qquad \le \norm{\lambda_1}_{\Lx\infty} \norm{\phi}_H \norm{\Delta \phi}_H + \norm{\lambda_1}_{\Lx\infty} \norm{\phi}_H \norm{F'(\phi_1) - F'(\phi_2)}_H \\
			& \qquad \le \frac18 \norm{\Delta \phi}^2_H + C \norm{\phi}^2_H, \\
			& - (\mu, \Delta \phi)_H = (\nabla \mu, \nabla \phi)_H 
			\le \frac14 \norm{\nabla \mu}^2_H + C \norm{\nabla \phi}^2_H \\
			& (F'(\phi_1) - F'(\phi_2), \Delta \phi)_H \le \frac18 \norm{\Delta \phi}_H + C \norm{\phi}^2_H.
		\end{align*}  
		Consequently, by integrating \eqref{eq:contdep:est1} on $(0,t)$, for any $t \in (0,T)$, using all the above estimates and \eqref{eq:ic2}, we infer that 
		\begin{equation}
			\label{eq:contdep:est1int}
			\begin{split}
			& \mezzo \norm{\nabla \phi(t)}^2_H 
			+ \int_\Omega (F'_0(\phi_1) - F'_0(\phi_2)) \phi(t) \, \de x 
			+ \frac14 \int_0^t \norm{\nabla \mu}^2_H \, \de s 
			+ \mezzo \int_0^t \norm{\Delta \phi}^2_H \, \de s \\
			& \quad \le C \int_0^T \left( 1 + \norm{\phi_1}^2_{\Hx2} + \norm{\phi_2}^2_{\Hx2} + \norm{\partial_t \phi_2}_H \right) \norm{\phi}^2_V \, \de s + CT \norm{\lambda}^2_H.
			\end{split}
		\end{equation}
		At this point, by using the strict convexity of $F_0$ (see   \eqref{hp:F0_bounds}), we observe that 
		\begin{align*}
			\int_\Omega (F'_0(\phi_1) - F'_0(\phi_2)) \phi(t) \, \de x 
			= \int_\Omega \left( \int_0^1 F_0''(z \phi_1 + (1-z) \phi_2) \, \de z \right) \phi(t)^2 \, \de x \ge c_0 \norm{\phi(t)}^2_H.
		\end{align*} 
		Hence, from \eqref{eq:contdep:est1int} we further deduce that 
		\begin{align*}
			& \min\left\{\mezzo, c_0\right\} \norm{\phi(t)}^2_V 
			+ \frac14 \int_0^t \norm{\nabla \mu}^2_H \, \de s 
			+ \mezzo \int_0^t \norm{\Delta \phi}^2_H \, \de s \\
			& \quad \le C \int_0^T \underbrace{\left( 1 + \norm{\phi_1}^2_{\Hx2} + \norm{\phi_2}^2_{\Hx2} + \norm{\partial_t \phi_2}_H \right)}_{\in \, \Lt1} \norm{\phi}^2_V \, \de s + CT \norm{\lambda}^2_H.
		\end{align*}
		Therefore, Gronwall's Lemma yields
		\begin{equation}
			\label{eq:contdep:energyest}
			\norm{\phi}^2_{\LT \infty V \cap \LT 2 W} + \norm{\nabla \mu}^2_{\LT 2 H} 
			\le C \norm{\lambda}^2_H.
		\end{equation}
		Moreover, by comparison in \eqref{eq:mu2}, it is now easy to also deduce that 
		\begin{equation}
			\label{eq:contdep:mul2v}
			\norm{\mu}^2_{\LT 2 V} \le C \norm{\lambda}^2_H.
		\end{equation}
		Likewise, by \eqref{eq:phi2}, one can also easily show that 
		\begin{equation*}
			\norm{\phi}^2_{\HT 1 {V^*}} \le C \norm{\lambda}^2_H.
		\end{equation*}
        }
        
		\noindent
		\textsc{Second estimate.}
		We now multiply \eqref{eq:phi2} by $\partial_t \phi$. Then we take the gradient of \eqref{eq:mu2} and we multiply it by $\nabla \partial_t \phi$. 
        Observe that this makes sense in our regularity setting. Integrating the resulting equations over $\Omega$, summing them up, and integrating by parts, we obtain that
		\begin{equation}
			\label{eq:contdep:est2}
			\begin{split}
			& \mezzo \ddt \norm{\Delta \phi}^2_H + \norm{\partial_t \phi}^2_H \\
			& \quad = (\nabla (F'(\phi_1) - F'(\phi_2)), \nabla \partial_t \phi)_H + (\lambda f, \partial_t \phi)_H - (\lambda_1 \phi, \partial_t \phi)_H - (\lambda \phi_2, \partial_t \phi)_H.
			\end{split}   
		\end{equation}
		To estimate the first term on the right-hand side of \eqref{eq:contdep:est2}, we initially rewrite it by integrating by parts and by adding and subtracting some terms (see the first two equalities below).
        We stress that the integration by parts does not produce boundary terms, since the separation property and \eqref{eq:bc} ensure that $\partial_{\n} F'(\phi_i) = F''(\phi_i) \partial_n \phi_i = 0$, for $i=1,2$.
		Then, we employ H\"older and Young's inequalities, Sobolev embeddings and Remark \ref{rmk:Fderivatives} to deduce that
		{\allowdisplaybreaks
		\begin{align*}
			& (\nabla (F'(\phi_1) - F'(\phi_2)), \nabla \partial_t \phi)_H \\
			& \quad = - (\Delta (F'(\phi_1) - F'(\phi_2)), \partial_t \phi)_H \\
			& \quad = - (F''(\phi_1) \Delta \phi, \partial_t \phi)_H 
			- (F'''(\phi_1) (\nabla \phi_1 + \nabla \phi_2) \cdot \nabla \phi, \partial_t \phi)_H \\
			& \qquad - ((F'''(\phi_1) - F'''(\phi_2))\nabla \phi_2 \cdot \nabla \phi_2, \partial_t \phi)_H
			- ((F''(\phi_1) - F''(\phi_2)) \Delta \phi_2, \partial_t \phi)_H \\
			& \quad \le \norm{F''(\phi_1)}_{\Lx\infty} \norm{\Delta \phi}_H \norm{\partial_t \phi}_H \\
			& \qquad + \norm{F'''(\phi_1)}_{\Lx\infty} \left( \norm{\nabla \phi_1}_{\Lx4} + \norm{\nabla \phi_2}_{\Lx4} \right) \norm{\nabla \phi}_{\Lx4} \norm{\partial_t \phi}_H \\
			& \qquad + \norm{F'''(\phi_1) - F'''(\phi_2)}_{\Lx\infty} \norm{\nabla \phi_2}^2_{\Lx4} \norm{\partial_t \phi}_H \\
			& \qquad + \norm{F''(\phi_1) - F''(\phi_2)}_{\Lx\infty} \norm{\Delta \phi_2}_H \norm{\partial_t \phi}_H \\
			& \quad \le C \norm{\Delta \phi}_H \norm{\partial_t \phi}_H 
			+ C \left( \norm{\phi_1}_W + \norm{\phi_2}_W \right) \norm{\phi}_W \norm{\partial_t \phi}_H \\
			& \qquad + C \norm{\phi}_{\Lx\infty} \left( \norm{\phi_2}_W + \norm{\phi_2}^2_W \right) \norm{\partial_t \phi}_H \\
			& \quad \le \frac18 \norm{\partial_t \phi}^2_H 
			+ C \left( 1 + \norm{\phi_1}^2_W + \norm{\phi_2}^4_W \right) \norm{\phi}^2_W. 
		\end{align*}
		}%
		Here we used the Sobolev embeddings $W \hookrightarrow \Wx{1,4}$ and $W \hookrightarrow \Lx\infty$.
		Moreover, we observe that $\norm{\phi_1}^2_W + \norm{\phi_2}^4_W \in \Lt\infty$ by Theorem \ref{thm:strongsols}.
		In particular, the above estimate entails, as a byproduct, that
		\begin{equation}
			\label{eq:contdep:lapldiff}
			\norm{\Delta (F'(\phi_1) - F'(\phi_2))}^2_H \le C \left( 1 + \norm{\phi_1}^2_W + \norm{\phi_2}^4_W \right) \norm{\phi}^2_W.
		\end{equation}
		The remaining terms on the right-hand side of \eqref{eq:contdep:est2} can be easily estimated as follows
		\begin{align*}
			& (\lambda f, \partial_t \phi)_H 
			\le \norm{\lambda}_H \norm{f}_{\Lx\infty} \norm{\partial_t \phi}_H 
			\le \frac18 \norm{\partial_t \phi}^2_H + C \norm{\lambda}^2_H, \\
			& (\lambda_1 \phi, \partial_t \phi)_H 
			\le \norm{\lambda_1}_{\Lx\infty} \norm{\phi}_H \norm{\partial_t \phi}_H 
			\le \frac18 \norm{\partial_t \phi}^2_H + C \norm{\phi}^2_H, \\
			& (\lambda \phi_2, \partial_t \phi)_H 
			\le \norm{\lambda}_H \norm{\phi_2}_{\Lx\infty} \norm{\partial_t \phi}_H 
			\le \frac18 \norm{\partial_t \phi}^2_H + C \norm{\lambda}^2_H.
		\end{align*}
		Thus, by integrating \eqref{eq:contdep:est2} in $(0,t)$, for any given $t \in (0,T)$, and by exploiting the estimates above, we infer that 
		\begin{align*}
			& \mezzo \norm{\Delta \phi(t)}^2_H + \mezzo \int_0^t \norm{\partial_t \phi}^2_H \, \de s \\
			& \quad \le C \int_0^T \left( 1 + \norm{\phi_1}^2_W + \norm{\phi_2}^4_W \right) \left( \norm{\Delta \phi}^2_H + \norm{\phi}^2_H \right) \, \de s
			+ CT \norm{\lambda}^2_H.
		\end{align*} 
		Using again Gronwall's Lemma and recalling \eqref{eq:contdep:energyest} for $\phi$ in $\LT \infty H$, we get
		\begin{equation}
			\label{eq:contdep:phistrong}
			\norm{\phi}^2_{\HT 1 H \cap \LT \infty W} \le C \norm{\lambda}^2_H.
		\end{equation}
		Next, by comparison in \eqref{eq:phi2}, we infer that $\Delta \mu$ is uniformly bounded in $\LT 2 H$ by \eqref{eq:contdep:phistrong}. This means that 
		\begin{equation}
			\label{eq:contdep:mul2w}
			\norm{\mu}^2_{\LT 2 W} \le C \norm{\lambda}^2_H.
		\end{equation}
		Applying now the Laplacian to \eqref{eq:mu2}, by comparison, we infer that 
		\[
			\norm{\Delta^2 \phi}^2_H \le C \norm{\Delta \mu}^2_H + C \norm{\Delta (F'(\phi_1) - F'(\phi_2))}^2_H.
		\]
		Then, using  \eqref{eq:contdep:lapldiff}, \eqref{eq:contdep:phistrong}, \eqref{eq:contdep:mul2w} and recalling that $\norm{\phi_1}^2_W + \norm{\phi_2}^4_W \in \Lt\infty$ (see Theorem \ref{thm:strongsols}), we deduce that 
		\[
			\norm{\Delta^2 \phi}^2_{\LT 2 H} \le C \norm{\lambda}^2_H. 
		\]
		In turn, on account of \eqref{eq:bc2}, by standard elliptic regularity we infer that
		\[
			\norm{\phi}^2_{\LT 2 {\Hx4}} \le C \norm{\lambda}^2_H.
		\]
		This concludes the proof.
	\end{proof}

	\section{Existence of an optimal control}
    \label{EOC}

	Given an initial guess $\phi_0$ satisfying \ref{ass:initial2}, we now consider the optimal control problem:

	\bigskip
	\noindent(CP) \textit{Minimise the cost functional}
	\begin{equation*}
			\mathcal{J}(\phi, \lambda_0) = \frac{\alpha_1}{2} \int_0^T \int_{\Omega \setminus D} |\phi - f|^2 \,\de x \, \de t + \frac{\alpha_2}{2} \int_{\Omega \setminus D} |\phi(T) - f|^2 \,\de x + \frac{\beta}{2} \int_{\Omega \setminus D} \, \left(\frac{1}{\lambda_0} \right)^2 \,\de x,
	\end{equation*}
	\textit{subject to the control constraints}
	\begin{equation*} 
			\lambda_0 \in \Uad := \{ \lambda_0 \in L^{\infty}(\Omega \setminus D) \mid 0 < \lambda_{\text{min}} \le \lambda_0 \le \lambda_{\text{max}} \text{ a.e. in } \Omega \},  
	\end{equation*}
	\textit{and to the state system \eqref{eq:phi}--\eqref{eq:ic}}.
	\bigskip

    \noindent
	We recall that the fidelity coefficient takes the form given by \eqref{eq:lambda_gen}, namely
	\[
		\lambda(x) = \begin{cases}
			0 \quad & \hbox{if $x \in D$}, \\
			\lambda_0(x) \quad & \hbox{if $x \in \Omega \setminus D$}.
		\end{cases}
	\]



    \begin{remark}
        We mention that all the analysis we will carry out in the following can be extended in a straight-forward way to the more general case in which the last term in the cost functional $\Jcal$ is replaced by the integral of a $C^2$ convex function of $\lambda_0$. For instance, to penalise large values of $\lambda_0$, one can choose
        \[
            \frac{\beta}{r} \int_{\Omega \setminus D} \, \left(\frac{1}{\lambda_0} \right)^r \,\de x,
        \]
        for any $r > 0$. 
        Here we just use $r = 2$ for convenience.
    \end{remark}

    \noindent
	Our assumptions on the parameters at play are:
	\begin{enumerate}[font = \bfseries, label = C\arabic*., ref = \bf{C\arabic*}]
		\item\label{ass:c1} $\alpha_1, \alpha_2, \beta \ge 0$, but not all equal to $0$.
		\item\label{ass:c2} $\lambda_{\text{min}}, \lambda_{\text{max}} \in \R$ such that $0 < \lambda_{\text{min}} < \lambda_{\text{max}}$.
	\end{enumerate}	
	First, we prove the existence of an optimal control.

	\begin{theorem}
		\label{thm:excontrol}
		Assume hypotheses \ref{ass:omega}--\ref{ass:f}, \ref{ass:initial2} and \ref{ass:c1}--\ref{ass:c2}.
		Then, the optimal control problem (CP) admits at least one solution $\bar{\lambda}_0 \in \Uad$, that is, if $(\phib, \mub)$ is the solution to \eqref{eq:phi}--\eqref{eq:ic} associated to $\lambdazb$, then  
		\[
			\Jcal(\phib, \lambdazb) = \min_{\lambda_0 \in \Uad} \Jcal(\phi, \lambda_0).
		\]
	\end{theorem}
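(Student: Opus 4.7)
The plan is to apply the direct method of the calculus of variations. First, observe that $\Jcal$ is bounded from below by $0$, since each of its three terms is non-negative; thus $J_* := \inf_{\lambda_0 \in \Uad} \Jcal(\phi, \lambda_0)$ is finite. I would then pick a minimising sequence $\{\lambda_{0,n}\} \subset \Uad$ and denote by $(\phi_n, \mu_n)$ the corresponding strong solutions of \eqref{eq:phi}--\eqref{eq:ic}, whose existence, uniqueness, and regularity are guaranteed by Theorem \ref{thm:strongsols}. Since $\lambda_{\mathrm{min}} \le \lambda_{0,n} \le \lambda_{\mathrm{max}}$ a.e., the sequence is bounded in $L^\infty(\Omega \setminus D)$, so up to a subsequence $\lambda_{0,n} \weakstar \lambdazb$ in $L^\infty(\Omega \setminus D)$, and in particular $\lambda_{0,n} \weak \lambdazb$ in $L^2(\Omega \setminus D)$. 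Because $\Uad$ is convex and closed in $L^2(\Omega\setminus D)$, its limit point $\lambdazb$ belongs to $\Uad$, and the corresponding $\lambda$ is admissible in the sense of \ref{ass:lambda}.

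Next I would derive uniform a priori bounds for the states $(\phi_n, \mu_n)$. The key observation is that all the estimates in Theorem \ref{thm:strongsols} depend on the data only through $\norm{\lambda_n}_{L^\infty}$, which is controlled by $\lambda_{\mathrm{max}}$ and through the initial energy, which is fixed; by Remark \ref{rmk:regularity}, the separation constant $\delta$ can also be chosen independent of $n$. Hence $\phi_n$ is uniformly bounded in $\HT 1 V \cap \LT \infty W \cap \LT 2 {\Hx 4}$ and $\mu_n$ in $\LT \infty V \cap \LT 2 W$. By Banach--Alaoglu I can extract weak/weak-$*$ convergent subsequences $(\phi_n, \mu_n) \to (\phib, \mub)$ in the corresponding spaces, and by the Aubin--Lions lemma I can additionally ensure $\phi_n \to \phib$ strongly in $C([0,T];V) \cap \LT 2 W$ (and pointwise a.e.\ in $Q_T$). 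In particular $\phi_n(T) \to \phib(T)$ in $H$. Passing to the limit in the weak formulation is then routine: the product $\lambda_n(f - \phi_n)$ passes to $\lambda(f - \phib)$ because $\lambda_n \weakstar \lambda$ in $L^\infty$ while $\phi_n \to \phib$ strongly, and $F'(\phi_n) \to F'(\phib)$ strongly in $\LT 2 H$ thanks to the uniform separation and the $\mathcal{C}^4$-regularity of $F$ on $[-1+\delta, 1-\delta]$. By uniqueness in Theorem \ref{thm:strongsols}, $(\phib, \mub)$ is the strong solution associated to $\lambdazb$.

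Finally, the lower semicontinuity of $\Jcal$ must be checked. The two tracking terms pass to the limit directly by the strong convergences $\phi_n \to \phib$ in $L^2(Q_T)$ and $\phi_n(T) \to \phib(T)$ in $H$. For the control term, note that the map $s \mapsto 1/s^2$ is strictly convex and continuous on $[\lambda_{\mathrm{min}}, \lambda_{\mathrm{max}}]$, so the functional $\lambda_0 \mapsto \int_{\Omega \setminus D} (1/\lambda_0)^2\,\de x$ is convex and continuous on the closed convex set $\Uad \subset L^2(\Omega\setminus D)$, hence weakly lower semicontinuous. Combining these gives $\Jcal(\phib, \lambdazb) \le \liminf_n \Jcal(\phi_n, \lambda_{0,n}) = J_*$, so $\lambdazb$ is the sought optimal control.

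The main obstacle is the passage to the limit in the nonlinear terms, since $\Jcal$ and the state system are both genuinely nonlinear in $\lambda_0$; this is why it is essential to have uniform-in-$n$ strong-solution estimates and a uniform separation property, which then upgrade weak convergence of $(\phi_n, \mu_n)$ to strong convergence sufficient for identifying the limit equation and for the semicontinuity of the tracking terms.
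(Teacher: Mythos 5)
Your proposal is correct and follows essentially the same route as the paper: a minimising sequence, weak-$*$ compactness of the controls with $\Uad$ weakly closed, uniform strong-solution bounds and uniform separation for the states, compactness (Aubin--Lions/Simon) to pass to the limit in the state system, and weak lower semicontinuity of the cost. The only cosmetic difference is that the paper invokes convexity plus strong continuity of the whole functional for the semicontinuity step, whereas you pass the tracking terms to the limit directly by strong convergence and use convexity only for the control term; both are valid.
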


	\begin{proof}
		Let $\{\lambda_0^n \}_{n \in \N} \subset \Uad$ be a minimising sequence such that 
		\[
			0 \le \inf_{\lambda_0 \in \Uad} \Jcal(\phi, \lambda_0) = \lim_{n \to + \infty} \Jcal(\phi_n, \lambda_0^n),
		\]
		where $(\phi_n, \mu_n)$ is the solution to \eqref{eq:phi}--\eqref{eq:ic} corresponding to $\lambda_0^n$, with the regularity given by Theorem \ref{thm:strongsols}.
		Observe that $\{\lambda_0^n\}_{n \in \N} \subset \Uad$ is bounded in $\Lx\infty$. 
		Thus, by Banach--Alaoglu's theorem, we can infer that there exists $\lambdazb \in \Lx\infty$ such that, up to a subsequence,
		\[
			\lambda_0^n \weakstar \lambdazb \hbox{ weakly star in $\Lx\infty$} \quad \text{and} \quad \lambda_0^n \weak \lambdazb \hbox{ weakly in $\Lx p$ for any $p \in (1, + \infty)$}.
		\]
		Moreover, since $\Uad$ is convex and closed in $\Lx p$ for any $p \in (1, + \infty)$, which means that $\Uad$ is also weakly closed in $\Lx p$ for any $p \in (1, +\infty)$, it follows that $\lambdazb \in \Uad$.  

		Next, consider the solutions $(\phi_n, \mu_n)$ corresponding to $\lambda_0^n$.
		By Theorem \ref{thm:strongsols}, we have that they are uniformly bounded in the spaces of strong solutions. 
		Thus, using Banach--Alaoglu's theorem once more, we can deduce that, up to a subsequence, 
		\begin{align*}
			& \phi_n \weakstar \phib \quad \hbox{weakly star in $\HT 1 V \cap \LT \infty {\Wx{2,p}}$ for any $p \ge 2$,} \\
			& \mu_n \weakstar \mub \quad \hbox{weakly star in $\LT \infty V \cap \LT 2 W$.}
		\end{align*}
		In particular, thanks to well-known compact embeddings (see \cite[Section 8, Corollary 4]{S1986}), it follows that $\phi_n \to \phib$ strongly in $\CT 0 {\Hx s}$ for any $0 < s < 2$ which implies that $\phi_n \to \phib$ strongly in $C^0(\bar{Q_T})$, since $\Hx {s} \hookrightarrow  C^0(\bar{Q_T})$ if $s > 1$ in dimension two.
		Moreover, by uniform convergence, $\phib$ still satisfies the strict separation property, namely there exists $\delta > 0$ such that $\norm{\phib}_{C^0(\bar{Q_T})} \le 1 - \delta$. 
		This fact, by the continuity of $F'$ inside $(-1,1)$, implies that 
		\[
			F'(\phi_n) \to F'(\phib) \quad \hbox{strongly in $C^0(\bar{Q_T}) $.}
		\]
		Therefore, by exploiting the weak and strong convergences written above, it is now easy to show that the limit $(\phib, \mub)$ also satisfies the system \eqref{eq:phi}--\eqref{eq:ic} with respect to $\lambdazb$.
		Then, we can deduce that 
		\[
			\inf_{\lambda_0 \in \Uad} \Jcal(\phi, \lambda_0) \le \Jcal(\phib, \lambdazb).
		\]

		Finally, if we think $\Jcal(\phi, \lambda_0)$ as a functional $\widetilde{\Jcal}(\phi, \phi(T), \lambda_0)$ of three variables, we observe that $\widetilde{\Jcal}$ is weakly lower semi-continuous in $\Lqt2 \times H \times \{ \lambda_0 \in \Lxod2 \mid \lambda_0 \ge \lambda_* > 0 \}$, for some $0 < \lambda_* < \lambda_{\text{min}}$, with respect to the weak topology in $L^2$. 
		Indeed, it is easy to see that $\widetilde{\Jcal}$ is strongly continuous and convex in a set which is also closed and convex itself in $\Lqt2 \times H \times \Lxod2$. 
		Thus, a consequence of Hahn--Banach's theorem, implies that $\widetilde{\Jcal}$ is weakly lower semi-continuous. 
		Hence, we can infer that 
		\[
			\Jcal(\phib, \lambdazb) \le \liminf_{n \to + \infty} \Jcal(\phi_n, \lambda_0^n) = \inf_{\lambda_0 \in \Uad} \Jcal(\phi, \lambda_0) \le \Jcal(\phib, \lambdazb),
		\]
		which means that $\lambdazb$ is optimal.
		The proof is complete.
	\end{proof}

    \section{First-order optimality conditions}
    \label{OC1}

    We now introduce the control-to-state operator $\Scal$, associating to any $\lambda_0 \in \Uad$ the corresponding unique strong solution $(\phi, \mu) = \Scal(\lambda_0)$ to \eqref{eq:phi}--\eqref{eq:ic}.
	For the sake of future convenience, we set
	\begin{align*}
		& \XX := (\HT 1 V \cap \LT \infty {\Hx 3} \cap \LT 2 {\Hx4}) \times (\LT \infty V \cap \LT 2 W), \\
		& \YY := (\HT 1 H \cap \LT \infty W \cap \LT 2 {\Hx4}) \times \LT 2 W.
	\end{align*}
	Then, by Theorem \ref{thm:strongsols}, we have that $\Scal: \Uad \to \XX$  is well defined and that, by Theorem \ref{thm:contdep}, $\Scal: \Uad \to \YY$ is Lipschitz continuous with respect to the $H$-norm in $\Uad$ and the natural one in $\YY$.
	Our next step is to show the Fr\'echet differentiability of $\Scal$ in order to differentiate the cost functional and derive the first-order necessary optimality conditions.
    Indeed, in this section, we first study the linearised system and we prove the Fr\'echet differentiability of $\Scal$. Then, we prove the well-posedness of the adjoint system which allows us to show the first-order optimality conditions. 

	\subsection{Linearised system}

	As an \emph{ansatz} for the Fr\'echet derivative of the control-to-state operator, we now introduce the linearised system corresponding to our control problem.
	Indeed, to formulate the linearised system, we fix an optimal control $\lambdazb \in \Uad$, corresponding to an optimal state $(\phib, \mub) \in \XX$, and we linearise near $\lambdazb$:
	\[
		\phi = \phib + \xi, \quad \mu = \mub + \eta, \quad \lambda_0 = \lambdazb + h_0,
	\]
	with $h_0 \in L^\infty(\Omega \setminus D)$ such that $\norm{h_0}_{L^\infty(\Omega \setminus D)} < \lambda_{\text{min}}$ and $\lambdazb + h_0 \in \Uad$. 
	Then, by approximating the non-linearities at the first order of their Taylor expansion, it is easy to see that $(\xi, \eta)$ formally satisfies the problem
	\begin{alignat}{2}
		& \partial_t \xi - \Delta \eta = h (f - \phib) - \lambdab \xi \quad && \hbox{in $Q_T$,} \label{eq:xi} \\
		& \eta = - \Delta \xi + F''(\phib) \xi \quad && \hbox{in $Q_T$,} \label{eq:eta} \\
		& \partial_{\n} \xi = \partial_{\n} \eta = 0 \quad && \hbox{on $\Sigma_T$,} \label{eq:bclin} \\
		& \xi(0) = 0 \quad && \hbox{in $\Omega$,} \label{eq:iclin}
	\end{alignat}
	where we set
	\begin{equation}
		\label{eq:lambdab}
		\lambdab(x) = \begin{cases}
			0 & \hbox{if $x \in D$,} \\
			\lambdazb(x) & \hbox{if $x \in \Omega \setminus D$,}
		\end{cases}
		\quad \text{and} \quad
		h(x) = \begin{cases}
			0 & \hbox{if $x \in D$,} \\
			h_0(x) & \hbox{if $x \in \Omega \setminus D$.}
		\end{cases}
	\end{equation}
	We now prove the well-posedness of the linearised problem \eqref{eq:xi}--\eqref{eq:iclin} in the more general case in which $h \in H$. 

	\begin{proposition}
		\label{prop:linearised}
		Let \ref{ass:omega}--\ref{ass:f}, \ref{ass:initial2} and \ref{ass:c1}--\ref{ass:c2} hold. 
		If $(\phib, \mub) \in \XX$ be the strong solution to \eqref{eq:phi}--\eqref{eq:ic} corresponding to $\lambdazb \in \Uad$, 
		then, for any $h \in H$ the linearised problem \eqref{eq:xi}--\eqref{eq:iclin} admits a unique (weak) solution $(\xi,\eta)$  such that 
		\begin{align*}
			& \xi \in \HT 1 {V^*} \cap \LT \infty V \cap \LT 2 W, \\
			& \eta \in \LT 2 V,
		\end{align*}
		and
		\begin{alignat*}{2}
			&\duality{\partial_t \xi, v}_V + (\nabla \eta, \nabla v)_H = (h(f - \phib), v)_H - (\lambdab \xi, v)_H \quad && \forall v \in V, \hbox{ a.e. in } (0,T),\\
			& (\eta, v)_H = (\nabla \xi, \nabla v)_H + (F''(\phi) \xi, v)_H \quad && \forall v \in V, \hbox{ a.e. in } (0,T),\\
            &\xi(0)=0.
		\end{alignat*}	
		This weak solution is actually strong. Indeed, there holds
		\begin{align*}
			& \xi \in \HT 1 H \cap \LT \infty W \cap \LT 2 {\Hx4}, \\
			& \eta \in \LT 2 W,
		\end{align*}
		and we have the uniform estimate
		\begin{equation}
			\label{eq:lin:result}
			\norm{\xi}^2_{\HT 1 H \cap \LT \infty W \cap \LT 2 {\Hx4}} + \norm{\eta}^2_{\LT 2 W} \le C \norm{h}^2_H.
		\end{equation}
	\end{proposition}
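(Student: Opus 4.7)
The plan is to establish existence via a Faedo--Galerkin scheme using the eigenbasis $\{e_j\}_{j \in \N}$ of the Neumann--Laplace operator $\mathcal{N}$ from Section~\ref{WP}. Since these eigenfunctions are orthonormal in $H$, orthogonal in $V$, and belong to $W$ by elliptic regularity, projecting onto $V_n := \mathrm{span}\{e_1, \dots, e_n\}$ and eliminating $\eta^n$ through the discrete analogue of \eqref{eq:eta} reduces the problem to a linear system of ODEs for the Galerkin coefficients of $\xi^n$, with coefficients in $L^\infty(0,T)$ thanks to Remark~\ref{rmk:Fderivatives}, which guarantees $F''(\phib) \in L^\infty(Q_T)$. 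Carath\'eodory's theorem then yields global-in-time approximate solutions. Uniqueness of the final solution is immediate by linearity of \eqref{eq:xi}--\eqref{eq:iclin} once a continuous dependence estimate on $h$ is available; I would derive existence and the bound \eqref{eq:lin:result} simultaneously, via $n$-uniform a priori estimates, passage to the limit, and lower semi-continuity of the norms.

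For the a priori estimates I would closely mimic the two-step scheme of Theorem~\ref{thm:contdep}. The \emph{first estimate} tests the discrete counterpart of \eqref{eq:xi} against $\eta^n$ and the discrete counterpart of \eqref{eq:eta} against $\partial_t \xi^n - \Delta \xi^n$, then sums the two identities. The term $(F''(\phib) \xi^n, \partial_t \xi^n)_H$ is transformed via Leibniz's rule into $\tfrac{1}{2} \ddt (F''(\phib) \xi^n, \xi^n)_H - \tfrac{1}{2}(F'''(\phib) \partial_t \phib \, \xi^n, \xi^n)_H$, and the remainder is absorbed using $F'''(\phib) \in L^\infty(Q_T)$, $\partial_t \phib \in \LT 2 V$ (from $\phib \in \XX$), and the embedding $V \hookrightarrow \Lx 4$. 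The source term $(h(f - \phib), \eta^n)_H$, after rewriting $\eta^n$ via \eqref{eq:eta} so as to avoid requiring better regularity on $\eta^n$, produces a contribution controlled by $\tfrac{1}{8}\norm{\Delta \xi^n}^2_H + C \norm{\xi^n}^2_H + C \norm{h}^2_H$. A Gronwall argument then gives $\xi^n$ bounded in $\LT \infty V \cap \LT 2 W$ and $\eta^n$ bounded in $\LT 2 V$, each by $C \norm{h}_H$; by comparison in the equation, $\partial_t \xi^n$ is likewise bounded in $\LT 2 {V^*}$.

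The \emph{second estimate} tests the discrete \eqref{eq:xi} against $\partial_t \xi^n$ and the gradient of \eqref{eq:eta} against $\nabla \partial_t \xi^n$. Here the delicate step is the term $(\nabla (F''(\phib) \xi^n), \nabla \partial_t \xi^n)_H$, which after integration by parts (no boundary contribution, thanks to $\partial_{\n} \xi^n = 0$ and to the vanishing of $\partial_{\n} \phib$ guaranteed by the separation property \eqref{eq:separation}) expands into a finite sum of products of $F^{(i)}(\phib)$ for $i \le 4$, spatial derivatives of $\phib$ up to second order, and $\xi^n$, all tested against $\partial_t \xi^n$. The prefactors $F^{(i)}(\phib)$ are bounded in $L^\infty(Q_T)$ by Remark~\ref{rmk:Fderivatives}, while $\nabla \phib$, $\Delta \phib$ belong to $\LT \infty {\Lx 4} \cap \LT \infty H$ through the $\XX$-regularity of $\phib$; combining H\"older's inequality, the two-dimensional Sobolev embeddings $V \hookrightarrow \Lx 4$ and $W \hookrightarrow \Lx \infty$, and Young's inequality, a second Gronwall argument closes the estimate and yields $\xi^n$ bounded in $\HT 1 H \cap \LT \infty W$. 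By comparison in the discrete \eqref{eq:xi}, $\eta^n$ is bounded in $\LT 2 W$, and finally elliptic regularity applied to $-\Delta \xi^n = \eta^n - F''(\phib) \xi^n$, combined with $\partial_{\n} \xi^n = 0$, gives the $\LT 2 {\Hx 4}$ bound. Weak/weak-star compactness and lower semi-continuity transfer all bounds to the limit $(\xi, \eta)$, which satisfies the stated variational formulation by standard arguments.

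I expect the main technical obstacle to be the careful manipulation of the mixed time--space derivative terms in the second estimate — specifically, verifying the vanishing of the boundary contribution when integrating by parts $(\nabla(F''(\phib) \xi^n), \nabla \partial_t \xi^n)_H$, and controlling the remainders produced by the chain rule on $F''(\phib)$ using only the \emph{global} regularity of $\phib$ as a strong solution. Both points ultimately rest on the strict separation property \eqref{eq:separation} and on Remark~\ref{rmk:Fderivatives}, but the bookkeeping is non-trivial and mirrors very closely the second step in the proof of Theorem~\ref{thm:contdep}, which is precisely why the $H$-norm of $h$ — rather than a weaker norm — is the natural right-hand side in \eqref{eq:lin:result}.
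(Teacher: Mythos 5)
Your overall strategy -- Faedo--Galerkin in the eigenbasis of $\mathcal{N}$, two energy estimates patterned on Theorem \ref{thm:contdep}, comparison in the equations and elliptic regularity to reach $\LT 2 {\Hx 4}$ -- is exactly the route the paper takes, and almost all of your bookkeeping (the vanishing boundary terms via $\partial_{\n}\phib = \partial_{\n}\xi^n = 0$, the use of $W \hookrightarrow \Wx{1,4}$ and $W \hookrightarrow \Lx\infty$, Remark \ref{rmk:Fderivatives}) matches the paper's. There is, however, one step that does not close as you wrote it.

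In the first estimate you treat $(F''(\phib)\xi^n,\partial_t\xi^n)_H$ in one piece via Leibniz's rule, producing $\tfrac12\ddt(F''(\phib)\xi^n,\xi^n)_H$ plus a remainder. After integrating in time, the term $\tfrac12(F''(\phib(t))\xi^n(t),\xi^n(t))_H$ sits on the left-hand side, but $F'' = F_0'' + F_1''$ is \emph{not} bounded below by a positive constant: assumption \ref{ass:F} only gives $F_0'' \ge c_0$ together with $\abs{F_1''}\le\theta$, so the quadratic form is bounded below by $(c_0-\theta)\norm{\xi^n(t)}_H^2$, which is negative in the relevant regime (for the logarithmic potential \eqref{eq:logpotential}, $F_1'' \equiv -\theta_c < 0$ with $\theta_c > \theta$). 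The left-hand side then fails to control $\norm{\xi^n(t)}_V^2$ -- indeed it may fail to be coercive at all, since $\xi^n$ has no zero-mean property and the defect $-(\theta-c_0)\norm{\xi^n(t)}_H^2$ is evaluated at time $t$, so it cannot be absorbed by Gronwall, nor by $\tfrac12\norm{\nabla\xi^n(t)}_H^2$ via Poincar\'e--Wirtinger unless the constants happen to be favourable. This is precisely why the paper splits the nonlinearity: only the convex part is kept inside the time derivative, so that $(F_0''(\phib)\xi(t),\xi(t))_H \ge c_0\norm{\xi(t)}_H^2$ gives the full $V$-coercivity, while the perturbation term $(F_1''(\phib)\xi,\partial_t\xi)_H$ is handled by substituting $\partial_t\xi = \Delta\eta + h(f-\phib) - \lambdab\xi$ from \eqref{eq:xi} and integrating by parts, which costs only a fraction of $\norm{\nabla\eta}_H^2$. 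Your argument is repaired by adopting this splitting (or by first establishing an independent $\LT\infty H$ bound on $\xi^n$, e.g.\ testing \eqref{eq:xi} with $\xi^n$); as written, the claim that ``a Gronwall argument then gives $\xi^n$ bounded in $\LT\infty V$'' does not follow. The second estimate and the subsequent bootstrapping are fine once the first estimate is secured.
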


	\begin{proof}
		The proof can be carried out rigorously through a Faedo--Galerkin discretisation scheme using finite-dimensional spaces generated by the eigenvectors of the operator $\mathcal{N}$. 
		However, being the system linear, this procedure is quite standard, thus we proceed by providing only formal \emph{a priori} estimates, leaving the details to the interested reader. 
        In particular, due to the linearity of the system, the discrete solutions are $C^1$ with respect to time with values in the discrete eigenspaces, thus all the upcoming \emph{a priori} estimates make sense within the discretisation scheme.

		\noindent
		\textsc{First estimate.}
		We begin by testing \eqref{eq:xi} with $\eta$ and \eqref{eq:eta} with $- \partial_t \xi + \eta$ in $H$. Summing the obtained equations up, we obtain:
		\begin{equation}
			\label{eq:lin:est1}
			\begin{split}
			& \mezzo \ddt \norm{\nabla \xi}^2_H + \norm{\eta}^2_H + \norm{\nabla \eta}^2_H + (F''(\phib) \xi, \partial_t \xi)_H \\
			& \quad = (h(f - \phib), \eta)_H - (\lambdab \xi, \eta)_H + (F''(\phib) \xi, \eta)_H - (\Delta \xi, \eta)_H.
			\end{split}
		\end{equation} 
	   Using \ref{ass:F} and Leibniz's rule, we get
		\begin{align*}
			(F''(\phib) \xi, \partial_t \xi)_H
			& = (F''_0(\phib) \xi, \partial_t \xi)_H + (F''_1(\phib) \xi, \partial_t \xi)_H \\
			& = \mezzo \ddt (F''_0(\phib) \xi, \xi)_H - (F_0'''(\phib) \partial_t \phib \, \xi, \xi)_H + (F_1''(\phib) \xi, \partial_t \xi)_H \\
			& = \mezzo \ddt (F''_0(\phib) \xi, \xi)_H - I_1 + I_2. 
		\end{align*}
		Next, we estimate $I_1$ by using H\"older's inequality, the Sobolev embedding $V \hookrightarrow \Lx4$ and Remark \ref{rmk:Fderivatives}. This gives
		\begin{equation*}
			I_1  = \int_\Omega F_0'''(\phib) \partial_t \phib \, \xi^2 \, \de x 
			 \le \norm{F_0'''(\phib)}_{\Lx\infty} \norm{\partial_t \phib}_{H} \norm{\xi}^2_{\Lx 4} 
			\le C \norm{\partial_t \phib}_{H} \norm{\xi}^2_V,
		\end{equation*}
		where $\norm{\partial_t \phib}_{H} \in \Lt2$ by Theorem \ref{thm:strongsols}. 
		To estimate $I_2$, we rewrite $\partial_t \xi$ through equation \eqref{eq:xi}, then we integrate by parts and finally we use H\"older and Young's inequalities, Sobolev embeddings and  \ref{ass:F}. Hence, we find
		\begin{align*}
			I_2 & = \int_\Omega F_1''(\phib) \xi \, \partial_t \xi \, \de x \\
			& = \int_\Omega F_1''(\phib) \xi \, \Delta \eta \, \de x 
			+ \int_\Omega F_1''(\phib) \xi \left( h(f - \phib) - \lambdab \xi \right) \, \de x \\
			& = - \int_\Omega F_1''(\phib) \nabla \xi \cdot \nabla \eta \, \de x 
			- \int_\Omega F_1'''(\phib) \nabla \phib \cdot \nabla \eta \, \xi \, \de x 
			+ \int_\Omega F_1''(\phib) \xi \left( h(f - \phib) - \lambdab \xi \right) \, \de x \\
			& \le \norm{F_1''(\phib)}_{\Lx\infty} \norm{\nabla \xi}_H \norm{\nabla \eta}_H + \norm{F_1'''(\phib)}_{\Lx\infty} \norm{\nabla \phib}_{\Lx 4} \norm{\nabla \eta}_H \norm{\xi}_{\Lx 4} \\
			& \quad + \norm{F_1''(\phib)}_{\Lx\infty} \norm{\xi}_H \left( \left( \norm{f}_{\Lx\infty} + \norm{\phib}_{\Lx\infty} \right) \norm{h}_H + \norm{\lambdab}_{\Lx\infty} \norm{\xi}_H \right) \\
			& \le \frac12 \norm{\nabla \eta}^2_H + C \left( 1 + \norm{\nabla \phib}^2_{\Lx 4} \right) \norm{\xi}^2_V + \norm{h}^2_H, 
		\end{align*} 
		where $\norm{\nabla \phib}^2_{\Lx 4} \in \Lt\infty$ owing to Theorem \ref{thm:strongsols} and the Sobolev embedding $\Hx2 \hookrightarrow \Wx{1,4}$.
		Going back to \eqref{eq:lin:est1}, we now estimate the terms on the right-hand side by using similar techniques, namely,
		\begin{align*}
			& (h(f - \phib), \eta)_H 
			\le \norm{h}_H \left( \norm{f}_{\Lx\infty} + \norm{\phib}_{\Lx\infty} \right) \norm{\eta}_H 
			\le \frac14 \norm{\eta}^2_H + C \norm{h}^2_H, \\
			& (\lambdab \xi, \eta)_H \le \norm{\lambdab}_{\Lx\infty} \norm{\xi}_H \norm{\eta}_H \le \frac14 \norm{\eta}^2_H + C \norm{\xi}^2_H, \\
			& (F''(\phib) \xi, \eta)_H \le \norm{F''(\phib)}_{\Lx\infty} \norm{\xi}_H \norm{\eta}_H \le \frac14 \norm{\eta}^2_H + C \norm{\xi}^2_H, \\
			& - (\Delta \xi, \eta)_H = (\nabla \xi, \nabla \eta)_H \le \frac14 \norm{\nabla \eta}^2_H + \norm{\nabla \xi}^2_H.
		\end{align*}
		Therefore, by integrating \eqref{eq:lin:est1} on $(0,t)$, for any $t \in (0,T)$, recalling \eqref{eq:iclin}, and putting together all the previous estimates, we deduce that 
		\begin{align*}
			& \mezzo \left( (F''_0(\phib) \xi(t), \xi(t))_H + \norm{\nabla \xi(t)}^2_H  \right) + \frac14 \int_0^t \norm{\eta}^2_V \, \de s \\
			& \quad \le C \int_0^T \left( 1 + \norm{\partial_t \phib}_H + \norm{\nabla \phib}^2_{\Lx 4} \right) \norm{\xi}^2_V \, \de x + CT \norm{h}^2_H.
		\end{align*} 
		Then, \ref{ass:F} entails that 
		\[
			(F''_0(\phib) \xi(t), \xi(t))_H + \norm{\nabla \xi(t)}^2_H 
			\ge c_0 \norm{\xi(t)}^2_H + \norm{\nabla \xi(t)}^2_H
			\ge \min\{c_0,1\} \norm{\xi(t)}^2_V.
		\]
		Thus, using Gronwall's Lemma, we conclude that 
		\begin{equation}
			\label{eq:lin:energyest}
			\norm{\xi}^2_{\LT \infty V} + \norm{\eta}^2_{\LT 2 V} \le C \norm{h}^2_H.
		\end{equation}
		Moreover, by comparison in \eqref{eq:xi} and \eqref{eq:eta}, it is now easy to see that  
		\begin{equation}
			\label{eq:lin:phil2w}
			\norm{\phi}^2_{\HT 1 {V^*} \cap \LT 2 W} \le C \norm{h}^2_H. 
		\end{equation}
		Using the above bounds, one can simply pass to the limit in the discretisation framework and show that there exists a (weak) solution $(\xi, \eta)$ to the linearised system \eqref{eq:xi}--\eqref{eq:iclin} with the prescribed regularities. 
		Additionally, being the system linear, uniqueness follows from estimate \eqref{eq:lin:energyest}.

		\noindent
		\textsc{Second estimate.}
		We now test \eqref{eq:xi} with $\partial_t \xi$ and \eqref{eq:eta} with $\Delta \partial_t \xi$ and we sum them up. This gives
		\begin{equation}
			\label{eq:lin:est2}
			\mezzo \ddt \norm{\Delta \xi}^2_H + \norm{\partial_t \xi}^2_H = (F''(\phib) \xi, \Delta \partial_t \xi)_H + (h(f - \phib), \partial_t \xi)_H - (\lambdab \xi, \partial_t \xi)_H. 
		\end{equation}
		Note that, to be rigorous, one should perform this estimate within the Galerkin discretisation framework. 
		Nevertheless, our formal \emph{a priori} estimates can easily be obtained also in the corresponding discretised version.
        
		Let us estimate the first term on the right-hand side of \eqref{eq:lin:est2}. We integrate by parts and then employ H\"older and Young's inequalities, the Sobolev embeddings $W \hookrightarrow \Wx{1,4}$ and $W \hookrightarrow \Lx\infty$ and Remark \ref{rmk:Fderivatives} to deduce the following:
		\begin{align*}
			(F''(\phib) \xi, \Delta \partial_t \xi)_H 
			& = (\Delta (F''(\phib) \xi), \partial_t \xi)_H \\
			& = (F''(\phib) \Delta \xi + 2 F'''(\phib) \nabla \phib \cdot \nabla \xi + F^{(4)}(\phib) \nabla \phib \cdot \nabla \phib \, \xi, \partial_t \xi)_H \\
			& \le \norm{F''(\phib)}_{\Lx\infty} \norm{\Delta \xi}_H \norm{\partial_t \xi}_H \\
			& \quad + 2 \norm{F'''(\phib)}_{\Lx\infty} \norm{\nabla \phib}_{\Lx4} \norm{\nabla \xi}_{\Lx4} \norm{\partial_t \xi}_H \\
			& \quad + \norm{F^{(4)}(\phib)}_{\Lx\infty} \norm{\nabla \phib}^2_{\Lx4} \norm{\xi}_{\Lx\infty} \norm{\partial_t \xi}_H \\
			& \le \frac14 \norm{\partial_t \xi}^2_H + C \left( 1 + \norm{\phib}^2_W + \norm{\phib}^4_W \right) \norm{\xi}^2_W. 
		\end{align*}
		On the other hand, the estimate above also implies that 
		\begin{equation}
			\label{eq:lin:lapl}
			\norm{\Delta (F''(\phib) \xi)}^2_H \le C \left( 1 + \norm{\phib}^2_W + \norm{\phib}^4_W \right) \norm{\xi}^2_W,
		\end{equation}
		where $\norm{\phib}^2_W + \norm{\phib}^4_W \in \Lt\infty$ by Theorem \ref{thm:strongsols}. 
		Next, we can also easily estimate the remaining terms on the right-hand side of \eqref{eq:lin:est2}, namely,
		\begin{align*}
			& (h(f - \phib), \partial_t \xi)_H 
			\le \norm{h}_H \left( \norm{f}_{\Lx\infty} + \norm{\phib}_{\Lx\infty} \right) \norm{\partial_t \xi}_H 
			\le \frac14 \norm{\partial_t \xi}^2_H + C \norm{h}^2_H, \\
			& (\lambdab \xi, \partial_t \xi)_H
			\le \norm{\lambdab}_{\Lx\infty} \norm{\xi}_H \norm{\partial_t \xi}_H
			\le \frac14 \norm{\partial_t \xi}^2_H + C \norm{\xi}^2_H.
		\end{align*}
		Then, integrating \eqref{eq:lin:est2} over $(0,t)$, for any $t \in (0,T)$, and using all the above estimates, we infer that 
		\begin{align*}
			& \mezzo \norm{\Delta \xi(t)}^2_H + \frac14 \int_0^t \norm{\partial_t \xi}^2_H \, \de s \\
			& \quad \le C \int_0^T \left( 1 + \norm{\phib}^2_W + \norm{\phib}^4_W \right) \left( \norm{\Delta \xi}^2_H + \norm{\xi}^2_H \right) \, \de s + CT \norm{h}^2_H.
		\end{align*}
		Hence, using Gronwall's Lemma and  \eqref{eq:lin:energyest} for $\xi$, we get that 
		\begin{equation}
			\label{eq:lin:xistrong}
			\norm{\xi}^2_{\HT 1 H \cap \LT \infty W} \le C \norm{h}^2_H. 
		\end{equation}
		Next, by comparison in \eqref{eq:xi}, it follows that $\Delta \eta$ is uniformly bounded in $\LT 2 H$ by \eqref{eq:lin:xistrong}, so that 
		\begin{equation}
			\label{eq:lin:etal2w}
			\norm{\eta}^2_{\LT 2 W} \le C \norm{h}^2_H. 
		\end{equation}
		Finally, taking the Laplacian of \eqref{eq:eta}, by comparison, we deduce that
		\[
			\norm{\Delta^2 \xi}^2_H \le C \norm{\Delta \eta}^2_H + C \norm{\Delta (F''(\phib) \xi)}^2_H.
		\]
		Consequently, by using \eqref{eq:lin:lapl}--\eqref{eq:lin:etal2w}, we obtain that 
		\[
			\norm{\Delta^2 \xi}^2_{\LT 2 H} \le C \norm{h}^2_H.
		\]
		Recalling now \eqref{eq:bclin}, by elliptic regularity theory we eventually find that 
		\[
			\norm{\xi}^2_{\LT 2 {\Hx4}} \le C \norm{h}^2_H.
		\]
		This ends the proof.
	\end{proof}

	\begin{remark}
		\label{rmk:lingen}
		For future reference, we observe that the proof of Proposition \ref{prop:linearised} can be easily extended to a more general linear system of the form:
		\begin{alignat}{2}
			& \partial_t \xi - \Delta \eta = - \lambdab \xi + g_1 \quad && \hbox{in $Q_T$,} \label{eq:xigen} \\
			& \eta = - \Delta \xi + F''(\phib) \xi + g_2 \quad && \hbox{in $Q_T$,} \label{eq:etagen} \\
			& \partial_{\n} \xi = \partial_{\n} \eta = 0 \quad && \hbox{on $\Sigma_T$,} \label{eq:bclingen} \\
			& \xi(0) = 0 \quad && \hbox{in $\Omega$,} \label{eq:iclingen}
		\end{alignat}
		where $g_1$ and $g_2$ are given sources ($g_1 = h (f - \phib)$ and $g_2 = 0$ in \eqref{eq:xi}--\eqref{eq:iclin}). 
		In particular, the following results can be proven:
		\begin{itemize}
			\item If $g_1 \in \LT 2 {V^*}$ and $g_2 \in \LT 2 V$, then there exists a unique weak solution to \eqref{eq:xigen}--\eqref{eq:iclingen} such that 
			\begin{align*}
				& \xi \in \HT 1 {V^*} \cap \LT \infty V \cap \LT 2 W, \\
				& \eta \in \LT 2 V,
			\end{align*}
			and the following estimate holds:
			\begin{equation}
				\label{eq:lingen:estweak}
				\begin{split}
				& \norm{\xi}^2_{\HT 1 {V^*} \cap \LT \infty V \cap \LT 2 W} + \norm{\eta}^2_{\LT 2 V} \\
				& \quad \le C \left( \norm{g_1}^2_{\LT 2 {V^*}} + \norm{g_2}^2_{\LT 2 V} \right).
				\end{split}
			\end{equation}
			\item If $g_1 \in \LT 2 H$ and $g_2 \in \LT 2 W$, then there exists a unique strong solution to \eqref{eq:xigen}--\eqref{eq:iclingen} such that 
			\begin{align*}
				& \xi \in \HT 1 H \cap \LT \infty W \cap \LT 2 {\Hx4}, \\
				& \eta \in \LT 2 W,
			\end{align*}
			and the following estimate holds:
			\begin{equation}
				\label{eq:lingen:eststrong}
				\begin{split}
				& \norm{\xi}^2_{\HT 1 H \cap \LT \infty W \cap \LT 2 {\Hx4}} + \norm{\eta}^2_{\LT 2 W} \\
				& \quad \le C \left( \norm{g_1}^2_{\LT 2 H} + \norm{g_2}^2_{\LT 2 W} \right).
				\end{split}
			\end{equation}
		\end{itemize}
	\end{remark}

	\subsection{Fr\'echet differentiability}

	We now fix an open bounded set $\mathcal{U}_R \subset L^\infty(\Omega \setminus D)$ such that $\Uad \subset \mathcal{U}_R$ and $\lambda_0 > 0$ for any $\lambda_0 \in \mathcal{U}_R$. 
	Note that the continuous dependence estimate of Theorem \ref{thm:contdep} now holds in $\mathcal{U}_R$ with a constant $K$ depending only on $R$ and the fixed data of the problem. 
	Here we want to show that the control-to-state operator $\Scal: \mathcal{U}_R \to \WW$ is Fr\'echet differentiable, where 
	\[
		\WW := (\HT 1 {V^*} \cap \LT \infty V \cap \LT 2 W) \times \LT 2 V.
	\]
	More precisely, the following result holds.

	\begin{theorem}
		\label{thm:frechet}
		Let \ref{ass:omega}--\ref{ass:f}, \ref{ass:initial2} and \ref{ass:c1}--\ref{ass:c2} hold. 
		Then, $\Scal: \mathcal{U}_R \to \WW$ is Fr\'echet differentiable, namely for any $\lambdazb \in \mathcal{U}_R$ there exists a unique Fr\'echet derivative $\D \Scal(\lambdazb) \in \mathcal{L}(L^\infty(\Omega \setminus D), \WW)$ such that 
		\begin{equation}
			\label{eq:fre:defdiff}
			\frac{\norm{\Scal(\lambdazb + h_0) - \Scal(\lambdazb) - \D \Scal(\lambdazb)[h_0]}_{\WW}}{\norm{h_0}_{L^2(\Omega \setminus D)}} \to 0 \quad \text{as } \norm{h_0}_{L^2(\Omega \setminus D)} \to 0.
		\end{equation}
		In particular, for any $h_0 \in L^\infty(\Omega \setminus D)$, the Fr\'echet derivative of $\Scal$ at $\lambdazb$, which we denote by $\D \Scal(\lambdazb)[h_0]$, is defined as the solution $(\xi, \eta)$ to the linearised system \eqref{eq:xi}--\eqref{eq:iclin} corresponding to $(\phib, \mub) = \Scal(\lambdazb)$, with datum $h_0$. In addition, the Fr\'echet derivative $\D \Scal$ is Lipschitz continuous as a function from $\mathcal{U}_R$ to the space $\mathcal{L}(L^\infty(\Omega \setminus D), \WW)$.
		More precisely, for any $\lambdazb^1, \lambdazb^2 \in \mathcal{U}_R$ the following estimate holds:
		\begin{equation}
			\label{eq:fre:c1}
			\norm{\D \Scal(\lambdazb^1) - \D \Scal(\lambdazb^2)}_{\mathcal{L}(L^\infty(\Omega \setminus D), \WW)} \le K \norm{\lambdazb^1 - \lambdazb^2}_{L^2(\Omega \setminus D)},
		\end{equation}
		for some constant $K > 0$ depending only on the parameters of the system.
	\end{theorem}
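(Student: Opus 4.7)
The plan follows the classical strategy for showing Fréchet differentiability of the control-to-state operator of a semilinear parabolic problem, combined with a Taylor expansion of the singular nonlinearity $F'$ made possible by the strict separation property. Fix $\lambdazb\in \mathcal{U}_R$ and let $(\phib,\mub)=\Scal(\lambdazb)\in\XX$. For a perturbation $h_0 \in L^\infty(\Omega\setminus D)$ with $\lambdazb+h_0\in\mathcal{U}_R$ (extended by zero to $h$ on $\Omega$), set $(\phi^h,\mu^h):=\Scal(\lambdazb+h_0)\in\XX$ and let $(\xi,\eta)$ be the solution of the linearised system \eqref{eq:xi}--\eqref{eq:iclin} provided by Proposition~\ref{prop:linearised}. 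Define the remainder
\[
\rho := \phi^h-\phib-\xi,\qquad \sigma := \mu^h-\mub-\eta.
\]

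First I would subtract the equations and use the identity
\[
F'(\phi^h)-F'(\phib)-F''(\phib)\xi = F''(\phib)\,\rho + R(\phi^h,\phib),
\]
where $R(\phi^h,\phib):=F'(\phi^h)-F'(\phib)-F''(\phib)(\phi^h-\phib)$ is the second-order Taylor remainder. A direct computation then shows that $(\rho,\sigma)$ solves a problem of exactly the form \eqref{eq:xigen}--\eqref{eq:iclingen} of Remark~\ref{rmk:lingen} (with $\lambdab$ and $F''(\phib)$ as the coefficients linearised at $(\phib,\mub)$), with sources
\[
g_1 = -\,h\,(\phi^h-\phib), \qquad g_2 = R(\phi^h,\phib).
\]
The weak estimate \eqref{eq:lingen:estweak} in Remark~\ref{rmk:lingen} then gives
\[
\norm{(\rho,\sigma)}_{\WW}^2 \le C\bigl(\norm{g_1}_{\LT 2 {V^*}}^2+\norm{g_2}_{\LT 2 V}^2\bigr),
\]
so the entire proof reduces to showing that the two sources are of order $\norm{h}_H^2$.

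The $g_1$ term is easy: by the Sobolev embedding $W\hookrightarrow L^\infty$ and the continuous dependence estimate \eqref{eq:contdep:result} of Theorem~\ref{thm:contdep}, one gets
\[
\norm{h(\phi^h-\phib)}_{\LT 2 H} \le \norm{h}_H\,\norm{\phi^h-\phib}_{\LT\infty{\Lx\infty}} \le C\,\norm{h}_H^2.
\]
The hard step, and the main obstacle, is to control $g_2$ in $\LT 2 V$. Here I would use the strict separation property \eqref{eq:separation} applied to $\phib$, $\phi^h$ and to the convex combinations $z\phi^h+(1-z)\phib$, which stay in a common compact subset of $(-1,1)$; together with Remark~\ref{rmk:Fderivatives}, this yields uniform bounds on $F^{(k)}$, $k=2,3,4$, along the segment. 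An integral form of the Taylor remainder then gives the pointwise bound $|g_2|\le C|\phi^h-\phib|^2$, and differentiating
\[
\nabla g_2 = \bigl[F''(\phi^h)-F''(\phib)-F'''(\phib)(\phi^h-\phib)\bigr]\nabla \phi^h + F'''(\phib)(\phi^h-\phib)\nabla(\phi^h-\phib)
\]
shows that $\nabla g_2$ is also quadratic in $\phi^h-\phib$ and its gradient. Invoking $\YY$-regularity of $\phi^h-\phib$ and the embeddings $W\hookrightarrow W^{1,4}\cap L^\infty$, each factor is controlled, after integration in time, by $\norm{\phi^h-\phib}_{\YY}^2\le C\norm{h}_H^2$ by Theorem~\ref{thm:contdep}. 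Plugging back in and dividing by $\norm{h_0}_{L^2(\Omega\setminus D)}$ yields \eqref{eq:fre:defdiff}. Uniqueness of $\D\Scal(\lambdazb)$ is immediate from the linearity of the ansatz, while the inclusion $\D\Scal(\lambdazb)\in\mathcal{L}(L^\infty(\Omega\setminus D),\WW)$ is a direct consequence of the estimate \eqref{eq:lin:result} of Proposition~\ref{prop:linearised}.

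For the Lipschitz bound \eqref{eq:fre:c1} the strategy is analogous. Given $\lambdazb^1,\lambdazb^2\in\mathcal{U}_R$ with corresponding states $(\phib^i,\mub^i)=\Scal(\lambdazb^i)$, and $h_0\in L^\infty(\Omega\setminus D)$ with linearised solutions $(\xi_i,\eta_i):=\D\Scal(\lambdazb^i)[h_0]$, I would write the system satisfied by $(\xi,\eta):=(\xi_1-\xi_2,\eta_1-\eta_2)$ by linearising at $(\phib^1,\mub^1)$ and carrying the discrepancies into the sources, obtaining again a problem of the form \eqref{eq:xigen}--\eqref{eq:iclingen} with
\[
g_1 = -h(\phib^1-\phib^2)-(\lambdab^1-\lambdab^2)\xi_2,\qquad g_2 = \bigl(F''(\phib^1)-F''(\phib^2)\bigr)\xi_2.
\]
Bounding $\phib^1-\phib^2$ in $\YY$ by $\norm{\lambdazb^1-\lambdazb^2}_{L^2(\Omega\setminus D)}$ via Theorem~\ref{thm:contdep}, and $\xi_2$ in $\YY$ by $\norm{h_0}_{L^\infty(\Omega\setminus D)}$ via Proposition~\ref{prop:linearised}, the same Sobolev/separation toolbox used above yields
\[
\norm{g_1}_{\LT 2 H}+\norm{g_2}_{\LT 2 V} \le C\,\norm{h_0}_{L^\infty(\Omega\setminus D)}\,\norm{\lambdazb^1-\lambdazb^2}_{L^2(\Omega\setminus D)},
\]
so that Remark~\ref{rmk:lingen} gives \eqref{eq:fre:c1} at once.
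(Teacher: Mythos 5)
Your proposal is correct and follows essentially the same route as the paper: the same remainder variables, the same Taylor-remainder decomposition of $F'$ handled via the separation property, the reduction to the general linearised system of Remark~\ref{rmk:lingen}, and the same sources $g_1,g_2$ in the Lipschitz step. The only (harmless) variation is that you estimate $g_1=-h(\phi^h-\phib)$ in $\LT 2 H$ using $\norm{\phi^h-\phib}_{\LT\infty{\Lx\infty}}\le C\norm{h}_H$, whereas the paper places it in $\LT 2 {V^*}$ via $\Lx{4/3}\hookrightarrow V^*$; both suffice for the weak estimate \eqref{eq:lingen:estweak}.
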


	\begin{remark}
		\label{rmk:Sext}
		Note that the definition of $\D \Scal(\lambdazb)[h_0]$ as the corresponding solution to the linearised system \eqref{eq:xi}--\eqref{eq:iclin} is well defined for any $h_0 \in L^2(\Omega \setminus D)$, and thus for any $h_0 \in L^\infty(\Omega \setminus D)$, due to Proposition \ref{prop:linearised}. 
		Then, $\D \Scal(\lambdazb)$ is a well defined linear bounded operator, i.e., it belongs to $\mathcal{L}(L^\infty(\Omega \setminus D), \WW)$.
		Observe also that \eqref{eq:fre:defdiff} shows the Fr\'echet differentiability with respect to the $L^2(\Omega \setminus D)$-norm, but, since $L^\infty(\Omega \setminus D) \hookrightarrow L^2(\Omega \setminus D)$, this also implies the differentiability in our reference space.
		Moreover, $\D \Scal(\lambdazb)$ can be extended to a linear bounded linear operator in $\mathcal{L}(L^2(\Omega \setminus D), \WW)$.
	\end{remark}

	\begin{proof}
		It is sufficient to prove the result for any small enough perturbation $h_0$, hence we fix $\gamma > 0$ and consider only perturbations such that 
		\[
			\norm{h_0}_{L^2(\Omega \setminus D)} \le \gamma.
		\]
		Let us fix $\lambdazb \in \mathcal{U}_R$ and $h_0$ as above and consider
		\begin{align*}
			& (\phih, \muh) := \Scal(\lambdazb + h_0), \\
			& (\phib, \mub) := \Scal(\lambdazb), \\
			& (\xi, \eta) \hbox{ solution to \eqref{eq:xi}--\eqref{eq:iclin} corresponding to $h_0$}.
		\end{align*}
		In order to show the Fr\'echet differentiability of $\Scal$, it is enough to show that there exists a constant $C > 0$, depending only on the parameters of the system and possibly on $\gamma$, and an exponent $s > 2$ such that 
		\[
			\norm{(\phih, \muh) - (\phib, \mub) - (\xi, \eta)}^2_{\WW} \le C \norm{h_0}^s_{L^2(\Omega \setminus D)}. 
		\]
		If we introduce the additional variables (see Theorem \ref{thm:strongsols} and Proposition \ref{prop:linearised})
		\begin{align*}
			& \psi := \phih - \phib - \xi \in \HT 1 H \cap \LT \infty W \cap \LT 2 {\Hx4}, \\
			& \zeta := \muh - \mub - \eta \in \LT 2 W,  
		\end{align*}
		then this is equivalent to showing that 
		\begin{equation}
			\label{eq:fre:aim}
			\norm{(\psi, \zeta)}^2_{\WW} \le C \norm{h_0}^s_{\Lx2}. 
		\end{equation}
		We can now rewrite problem \eqref{eq:xi}--\eqref{eq:iclin} in terms of $\psi$ and $\zeta$, namely,
		\begin{alignat}{2}
			& \partial_t \psi - \Delta \zeta = \Lambda^h \quad && \hbox{in $Q_T$,} \label{eq:psi} \\
			& \zeta = - \Delta \psi + F^h \quad && \hbox{in $Q_T$,} \label{eq:zeta} \\
			& \partial_{\n} \psi = \partial_{\n} \zeta = 0 \quad && \hbox{on $\Sigma_T$,} \label{eq:bcfre}\\
			& \psi(0) = 0 \quad && \hbox{in $\Omega$,} \label{eq:icfre}
		\end{alignat}
		where 
		\begin{align*}
			& \Lambda^h := (\lambdab + h) (f - \phih) - \lambdab (f - \phib) - h(f - \phib) + \lambdab \xi, \\
			& F^h := F'(\phi^h) - F'(\phib) - F''(\phib) \xi,
		\end{align*}
		and $\lambdab$ and $h$ are as in \eqref{eq:lambdab}.
		We can rewrite $\Lambda^h$ and $F^h$ as follows
		\begin{equation*}
			\Lambda^h = - \lambdab \psi - h (\phih - \phib), \quad
			F^h = F''(\phib) \psi + R^h (\phih - \phib)^2,
		\end{equation*}
		where 
		\[
			R^h = \int_0^1 (1-z) F'''(\phib + z(\phih - \phib)) \, \de z. 
		\]
		Recalling that $\phih$ and $\phib$ satisfy \eqref{eq:separation} and using the continuity of $F'''$, it is easy to see that there exists a constant $C_\gamma > 0$, possibly depending on $\gamma$, such that 
		\begin{equation}
			\label{eq:fre:Rh1}
			\norm{R^h}_{\Lqt\infty} \le C_\gamma.
		\end{equation}   
		Moreover, one can also see that 
		\begin{equation}
			\label{eq:fre:Rh2}
			\norm{\nabla R^h}_{\Lqt\infty} \le C_\gamma.
		\end{equation}
		Indeed, 
        recalling Theorem \ref{thm:strongsols} and \eqref{derbounds}, we have that
		\begin{align*}
			\norm{\nabla R^h}_{\Lx\infty} 
			& \le \int_0^1 (1 - z) \norm{F^{(4)}(\phib + z(\phih - \phib))}_{\Lx\infty} \norm{\nabla \phib + z (\nabla \phih - \nabla \phib)}_{\Lx\infty} \, \de z \\
			& \le C_\gamma \left( \norm{\phih}_{\Wx{1,\infty}} + \norm{\phib}_{\Wx{1,\infty}} \right) \\
			& \le C_\gamma \left( \norm{\phih}_{\LT \infty {\Wx{2,p}}} + \norm{\phib}_{\LT \infty {\Wx{2,p}}} \right) \le C_\gamma,
		\end{align*}
		which implies \eqref{eq:fre:Rh2}. 

		We now observe that problem \eqref{eq:psi}--\eqref{eq:icfre} has the same formal structure as general linearised problem \eqref{eq:xigen}--\eqref{eq:iclingen} in Remark \ref{rmk:lingen} (with $\xi = \psi$ and $\eta = \zeta$), if we put
		\[
			g_1 := - h (\phih - \phib) \quad \text{and} \quad 
			g_2 := R^h (\phih - \phib)^2.
		\]
		Then, by \eqref{eq:lingen:estweak}, we immediately deduce that
		\begin{equation}
			\label{eq:fre:unifest}
			\begin{split}
			& \norm{\psi}^2_{\HT 1 {V^*} \cap \LT \infty V \cap \LT 2 W} + \norm{\zeta}^2_{\LT 2 V} \\
			& \quad \le C \left( \norm{g_1}^2_{\LT 2 {V^*}} + \norm{g_2}^2_{\LT 2 V} \right).
			\end{split}
		\end{equation}
		Let us estimate the norms on the right-hand side. By applying H\"older's inequality, the Sobolev embedding $V \hookrightarrow \Lx 4$, which implies $\Lx {4/3} \hookrightarrow V^*$, and the continuous dependence estimate \eqref{eq:contdep:result}, we infer that 
		\begin{align*}
			\norm{g_1}^2_{\LT 2 {V^*}} 
			& = \int_0^T \norm{h (\phih - \phib)}^2_{V^*} \, \de t \\
			& \le \int_0^T \norm{h (\phih - \phib)}^2_{\Lx {4/3}} \, \de t \\
			& \le \int_0^T \norm{h}^2_H \norm{\phih - \phib}^2_{\Lx 4} \, \de t \\
			& \le \norm{h}^2_H \norm{\phih - \phib}^2_{\LT 2 V}
			\le C \norm{h}^4_H.
		\end{align*}
		Next, by using H\"older's inequality, the Sobolev embeddings $V \hookrightarrow \Lx 4$ and $W \hookrightarrow \Wx{1,4}$, estimates \eqref{eq:fre:Rh1} and \eqref{eq:fre:Rh2} and again the continuous dependence estimate \eqref{eq:contdep:result}, we deduce that 
		\begin{align*}
			\norm{g_2}^2_{\LT 2 H} 
			& = \int_0^T \norm{R^h (\phih - \phib)^2}^2_H \, \de t \\
			& \le \int_0^T \norm{R^h}^2_{\Lx\infty} \norm{\phih - \phib}^4_{\Lx 4} \, \de t \\
			& \le \norm{R^h}^2_{\Lqt\infty} \int_0^T \norm{\phih - \phib}^4_V \, \de t \\
			& \le C T \norm{\phih - \phib}^4_{\LT \infty V} \le C \norm{h}^4_H, \\
			\norm{\nabla g_2}^2_{\LT 2 H} 
			& = \int_0^T \norm{\nabla R^h \, (\phih - \phib)^2 + 2 R^h (\phih - \phib) \nabla (\phih - \phib)}^2_H \, \de t \\
			& \le C \int_0^T \norm{\nabla R^h}^2_{\Lx\infty} \norm{\phih - \phib}^4_{\Lx 4} \, \de t \\
			& \quad + C \int_0^T \norm{R^h}^2_{\Lx\infty} \norm{\phih - \phib}^2_{\Lx 4} \norm{\nabla (\phih - \phib)}^2_{\Lx 4} \, \de t \\
			& \le C T \norm{\nabla R^h}_{\Lqt\infty} \norm{\phih - \phib}^4_{\LT \infty V} \\
			& \quad + C \norm{R^h}^2_{\Lqt\infty} \norm{\phih - \phib}^2_{\LT \infty V} \int_0^T \norm{\phih - \phib}^2_{\Wx{1,4}} \, \de t \\
			& \le C \norm{\phih - \phib}^4_{\LT \infty V} + C \norm{\phih - \phib}^2_{\LT \infty V} \norm{\phih - \phib}^2_{\LT 2 W} \\
			& \le C \norm{h}^4_H. 
		\end{align*}
		Therefore, we deduce from \eqref{eq:fre:unifest} that 
		\[
			\norm{(\psi, \zeta)}^2_{\WW} \le \norm{h}^4_H, 
		\]
		which is exactly \eqref{eq:fre:aim} with $s = 4 > 2$. 

		We are left to prove that $\D \Scal$ is Lipschitz continuous. This amounts to show that, given $\lambdazb^i \in \mathcal{U}_R$, $i = 1,2$, corresponding to the solutions $(\phib_i, \mub_i)$, $i = 1,2$, we have, for any $h_0 \in L^\infty(\Omega \setminus D)$ with $\norm{h_0}_{L^\infty(\Omega \setminus D)} = 1$, that
		\begin{equation}
			\label{eq:fre:lipaim}
			\norm{\D\Scal(\lambdazb^1)[h_0] - \D\Scal(\lambdazb^2)[h_0]}^2_{\WW} 
			= \norm{(\xi_1, \eta_1) - (\xi_2, \eta_2)}^2_{\WW} \le K \norm{\lambdazb^1 - \lambdazb^2}^2_{L^2(\Omega \setminus D)}, 
		\end{equation} 
		which, by taking the supremum over $h_0 \in L^\infty(\Omega \setminus D)$ with $\norm{h_0}_{L^\infty(\Omega \setminus D)} = 1$, yields \eqref{eq:fre:c1}.
		Here, $(\xi_i, \eta_i)$, $i=1,2$, denotes the corresponding strong solution to the linearised system \eqref{eq:xi}--\eqref{eq:iclin} with $(\phib_i, \mub_i)$, $i = 1,2$, respectively,  with the same $h$. 
		
         Let us consider the problem solved by the differences $\xi := \xi_1 - \xi_2$ and $\eta := \eta_1 - \eta_2$, which formally takes the following form (see \eqref{eq:lambdab})
		\begin{alignat}{2}
			& \partial_t \xi - \Delta \eta = - \lambdab_1 \xi - h (\phib_1 - \phib_2) - (\lambdab_1 - \lambdab_2) \xi_2  \quad && \hbox{in $Q_T$,} \label{eq:xi2} \\
			& \eta = - \Delta \xi + F''(\phib_1) \xi + (F''(\phib_1) - F''(\phib_2)) \xi_2 \quad && \hbox{in $Q_T$,} \label{eq:eta2} \\
			& \partial_{\n} \xi = \partial_{\n} \eta = 0 \quad && \hbox{on $\Sigma_T$,} \label{eq:bclin2} \\
			& \xi(0) = 0 \quad && \hbox{in $\Omega$.} \label{eq:iclin2}
		\end{alignat}
		Observe once more that problem \eqref{eq:xi2}--\eqref{eq:iclin2} has the same formal structure as the general linearised problem \eqref{eq:xigen}--\eqref{eq:iclingen}, provided we set 
		\[
			g_1 = - h (\phib_1 - \phib_2) - (\lambdab_1 - \lambdab_2) \xi_2 
			\quad \text{and} \quad 
			g_2 = (F''(\phib_1) - F''(\phib_2)) \xi_2.
		\]
		Then, by Remark \ref{rmk:lingen}, recalling \eqref{eq:lingen:estweak}, we have
		\begin{equation}
		\label{eq:fre:mainestlip}
			\norm{(\xi, \eta)}^2_{\WW} \le C \left( \norm{g_1}^2_{\LT 2 {V^*}} + \norm{g_2}^2_{\LT 2 V} \right).
		\end{equation} 
		Let us estimate the norm of $g_1$ first. Using that $\norm{h}_{\Lx\infty} = 1$ and $\xi_2 \in \LT \infty {V}$ uniformly by Proposition \eqref{prop:linearised}, the Sobolev embedding $V \hookrightarrow \Lx4$, which implies $\Lx{4/3} \hookrightarrow V^*$, and the continuous dependence estimate \eqref{eq:contdep:result}, we have
		\begin{align*}
			\norm{g_1}^2_{\LT 2 {V^*}} & = \int_0^T \norm{g_1}^2_{V^*} \, \de t \\
			& \le C \int_0^T \norm{h (\phib_1 - \phib_2)}^2_{\Lx{4/3}} \, \de t + C \int_0^T \norm{(\lambdab_1 - \lambdab_2) \xi_2}^2_{\Lx{4/3}} \, \de t \\
			& \le C \norm{h}^2_H \int_0^T \norm{\phib_1 - \phib_2}^2_{\Lx4} \, \de t + C \int_0^T \norm{\xi_2}^2_{\Lx4} \norm{\lambdab_1 - \lambdab_2}^2_H \, \de t \\
			& \le C \int_0^T \norm{\phib_1 - \phib_2}^2_V \, \de t 
			+ C T \norm{\xi_2}^2_{\LT \infty V} \norm{\lambdab_1 - \lambdab_2}^2_H \\
			& \le C \norm{\lambdab_1 - \lambdab_2}^2_H.
		\end{align*}
		Concerning the norm of $g_2$, by employing Remark \ref{rmk:Fderivatives}, the Sobolev embedding $W \hookrightarrow \Lx \infty$, the regularity of $\xi_2$ given by Proposition \ref{prop:linearised} and the continuous dependence estimate \eqref{eq:contdep:result}, we find that
		\begin{align*}
			\norm{g_2}^2_{\LT 2 H} 
			& = \int_0^T \norm{(F''(\phib_1) - F''(\phib_2)) \xi_2}^2_H \, \de t \\
			& \le \int_0^T \norm{\xi_2}^2_{\Lx\infty} \norm{F''(\phib_1) - F''(\phib_2)}^2_H \, \de t \\
			& \le \norm{\xi_2}^2_{\LT \infty W} \int_0^T C \norm{\phib_1 - \phib_2}^2_H \, \de t \\
			& \le C \norm{\lambdab_1 - \lambdab_2}^2_H.  
		\end{align*}
		Arguing similarly for $\nabla g_2$, we get
        {\allowdisplaybreaks
		\begin{align*}
			& \norm{\nabla g_2}^2_{\LT 2 H} \\
			& \quad \le C \int_0^T \norm{F'''(\phib_1) \nabla(\phib_1 - \phib_2)}^2_H \, \de t + C \int_0^T \norm{(F'''(\phib_1) - F'''(\phib_2)) \nabla \phib_2 \, \xi_2}^2_H \, \de t \\
			& \qquad + C \int_0^T \norm{(F''(\phib_1) - F''(\phib_2)) \nabla \xi_2}^2_H \, \de t \\
			& \quad \le C \int_0^T \norm{F'''(\phib_1)}^2_{\Lx\infty} \norm{\nabla (\phib_1 - \phib_2)}^2_H \, \de t \\
			& \qquad + C \int_0^T \norm{F'''(\phib_1) - F'''(\phib_2)}^2_{\Lx4} \norm{\nabla \phib_2}^2_{\Lx4} \norm{\xi_2}^2_{\Lx\infty} \, \de t \\
			& \qquad + C \int_0^T \norm{F''(\phib_1) - F''(\phib_2)}^2_{\Lx4} \norm{\nabla \xi_2}^2_{\Lx4} \, \de t \\
			& \quad \le C \int_0^T \norm{\phib_1 - \phib_2}^2_V \, \de t 
			+ C \norm{\phib_2}^2_{\LT \infty W} \norm{\xi_2}^2_{\LT \infty W} \int_0^T \norm{\phib_1 - \phib_2}^2_{\Lx4} \, \de t \\
			& \qquad + \norm{\xi_2}^2_{\LT \infty W} \int_0^T \norm{\phib_1 - \phib}^2_{\Lx4} \, \de t \\
			& \quad \le C \int_0^T \norm{\phib_1 - \phib_2}^2_V \, \de t \le C \norm{\lambdab_1 - \lambdab_2}^2_H.  
		\end{align*}
        }%
		In particular, throughout the estimates above, we employed H\"older's inequality, \eqref{derbounds}, the Sobolev embeddings $V \hookrightarrow \Lx4$, $W \hookrightarrow \Lx \infty$ and $W \hookrightarrow \Wx{1,4}$, the regularities of $\phib_2$ and $\xi_2$ given respectively by Theorem \ref{thm:strongsols} and Proposition \ref{prop:linearised}, and finally the continuous dependence estimate \eqref{eq:contdep:result}. 
		Finally, going back to \eqref{eq:fre:mainestlip}, we conclude that 
		\[
			\norm{(\xi, \eta)}^2_{\WW} \le C \norm{\lambdab_1 - \lambdab_2}^2_H,
		\]
		that is, \eqref{eq:fre:lipaim}. 
		This concludes the proof.
	\end{proof}

	\subsection{Adjoint system}

	In order to derive the first-order optimality conditions for (CP) in a form which is suitable for applications, we now introduce the adjoint problem associated with our optimal control problem. 
	Indeed, we fix an optimal state $(\phib, \mub) = \Scal(\lambdazb)$ and we use the formal Lagrangian method with adjoint variables $(p,q)$. 
	Then, we find that the adjoint problem, which is formally solved by these variables, has the following form:
	\begin{alignat}{2}
		& - \partial_t p - \Delta q + \lambdab p + F''(\phib) q = \alpha_1 \chi_{\Omega \setminus D} (\phib - f) \quad && \hbox{in $Q_T$,} \label{eq:p} \\
		& q = - \Delta p \quad && \hbox{in $Q_T$,} \label{eq:q} \\
		& \partial_{\n} p = \partial_{\n} {q} = 0 \quad && \hbox{on $\Sigma_T$,} \label{eq:bcadj} \\
		& p(T) = \alpha_2 \chi_{\Omega \setminus D} (\phib(T) - f) \quad && \hbox{in $\Omega$.} \label{eq:fcadj}
	\end{alignat}
	Here $\chi_{\Omega \setminus D}$ denotes the characteristic function of the set $\Omega \setminus D$, namely $\chi_{\Omega \setminus D}(x) = 1$ if $x \in \Omega \setminus D$ and $\chi_{\Omega \setminus D}(x) = 0$ otherwise, $\lambdab$ is given by \eqref{eq:lambdab} and $\alpha_1$ and $\alpha_2$ are the constants appearing in the definition of $\Jcal$.

	\begin{remark}
	\label{rmk:adjfunct}
		To derive the adjoint problem, it is useful to rewrite the functional $\Jcal$ as
		\[
			\mathcal{J}(\phi, \lambda_0) = \frac{\alpha_1}{2} \int_0^T \int_{\Omega} \chiod |\phi - f|^2 \,\de x \, \de t + \frac{\alpha_2}{2} \int_{\Omega} \chiod |\phi(T) - f|^2 \,\de x + \frac{\beta}{2} \int_{\Omega \setminus D} \, \left(\frac{1}{\lambda_0}\right)^2 \,\de x.
		\]
		In this way, we can get the equations above.
		We also mention that if $\alpha_2 > 0$, due to the low regularity of the final datum $p(T)$, which can be at most in $\Lx \infty$, we are only able to establish the existence of very weak solutions to \eqref{eq:p}--\eqref{eq:fcadj}. 
		This is enough to derive first-order necessary conditions. However, we will need higher regularity for second-order ones.
		For this reason, we will also assume $\alpha_2 = 0$ in the following (see Section \ref{OC2}).
	\end{remark}

	\noindent
	We now state and prove a well-posedness result for the adjoint problem \eqref{eq:p}--\eqref{eq:fcadj}. 

	\begin{proposition}
		\label{prop:adjoint}
		Let \ref{ass:omega}--\ref{ass:f}, \ref{ass:initial2} and \ref{ass:c1}--\ref{ass:c2} hold. 
	Consider the strong solution $(\phib, \mub) \in \XX$ to \eqref{eq:phi}--\eqref{eq:ic} corresponding to $\lambdazb \in \Uad$. 
		Then the adjoint problem \eqref{eq:p}--\eqref{eq:fcadj} admits a unique weak solution such that 
		\begin{align*}
			& p \in \HT 1 {W^*} \cap \LT \infty H \cap \LT 2 W, \\
			& q \in \LT  2 H,
		\end{align*}
		which satisfies the following variational formulation 
		\begin{align}
			& \duality{- \partial_t p, w}_W - (q, \Delta w)_H + (\lambdab p, w)_H + (F''(\phib)q, w)_H = \alpha_1 (\chiod (\phib - f), w)_H, \label{eq:varform:p} \\
			& (q, w)_H = (- \Delta p, w)_H, \label{eq:varform:q}
		\end{align}
		for almost any~$t \in (0,T)$ and for any $w \in W$, and $p(T) = \alpha_2 \chiod (\phib(T) - f)$ in $H$. Moreover, if $\alpha_2 = 0$, we can further say that the unique solution $(p,q)$ is a strong solution to \eqref{eq:p}--\eqref{eq:fcadj}, satisfying all the equations almost everywhere, and enjoying the regularity:
		\begin{align*}
			& p \in \HT 1 H \cap \LT \infty W \cap \LT 2 {\Hx4}, \\
			& q \in \LT 2 W.
		\end{align*}
	\end{proposition}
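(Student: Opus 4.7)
I will outline the approach based on a time-reversal reduction to a fourth-order linear parabolic equation. Setting $s = T - t$ and defining $\tilde{p}(s) = p(T-s)$, $\tilde{q}(s) = q(T-s)$, $\tilde{\phi}(s) = \phib(T-s)$, the system becomes forward in time; substituting $\tilde{q} = -\Delta \tilde{p}$ into the first equation yields
\[
\partial_s \tilde{p} + \Delta^2 \tilde{p} - F''(\tilde{\phi})\Delta \tilde{p} + \lambdab\tilde{p} = \alpha_1 \chiod (\tilde{\phi} - f),
\]
with Neumann-type conditions $\partial_{\n} \tilde{p} = \partial_{\n}\Delta \tilde{p} = 0$ and initial datum $\tilde{p}(0) = \alpha_2 \chiod(\phib(T) - f) \in \Lx\infty \hookrightarrow H$. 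The coefficients $\lambdab$ and $F''(\tilde{\phi})$ are in $\Lqt\infty$ thanks to \ref{ass:lambda}, the strict separation property \eqref{eq:separation}, and Remark~\ref{rmk:Fderivatives}, while the source belongs to $\LT 2 H$ by Theorem~\ref{thm:strongsols} and \ref{ass:f}. I would construct approximate solutions by a Faedo--Galerkin scheme based on the eigenbasis of $\mathcal{N}$, whose elements automatically satisfy both Neumann conditions above.

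The first a priori estimate comes from testing the discretised equation with $\tilde{p}$ in $H$: using the biharmonic boundary conditions, this gives
\[
\tfrac{1}{2}\ddt \norm{\tilde{p}}^2_H + \norm{\Delta \tilde{p}}^2_H + (\lambdab\tilde{p}, \tilde{p})_H = (F''(\tilde{\phi})\Delta \tilde{p}, \tilde{p})_H + \alpha_1(\chiod(\tilde{\phi}-f), \tilde{p})_H,
\]
where Cauchy--Schwarz and Young's inequalities control the critical right-hand term by $\tfrac{1}{2}\norm{\Delta \tilde p}^2_H + C\norm{\tilde p}^2_H$. Gronwall's lemma then yields $\tilde{p} \in \LT\infty H \cap \LT 2 W$, hence $\tilde{q} = -\Delta \tilde{p} \in \LT 2 H$, and a comparison in the equation gives $\partial_s \tilde{p} \in \LT 2 {W^*}$. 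Passing to the limit in the Galerkin scheme and interpreting $\int_\Omega (-\Delta q) w \, \de x = -(q, \Delta w)_H$ for $w \in W$ by duality yields the weak formulation \eqref{eq:varform:p}--\eqref{eq:varform:q}; uniqueness follows from linearity by applying the same estimate to the difference of two solutions.

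When $\alpha_2 = 0$, the initial datum $\tilde p(0) = 0$ allows for the higher-order estimate obtained by testing with $\partial_s \tilde{p}$ in $H$:
\[
\norm{\partial_s \tilde p}^2_H + \tfrac{1}{2}\ddt \norm{\Delta \tilde p}^2_H = (F''(\tilde\phi)\Delta \tilde p - \lambdab \tilde p + \alpha_1 \chiod(\tilde\phi - f), \partial_s \tilde p)_H.
\]
Young's inequality plus Gronwall yields $\tilde p \in \HT 1 H \cap \LT\infty W$, whence $\tilde q \in \LT\infty H$. Finally, rewriting the equation as $\Delta^2 \tilde p = -\partial_s \tilde p + F''(\tilde\phi)\Delta \tilde p - \lambdab \tilde p + \alpha_1 \chiod(\tilde\phi - f)$ shows $\Delta^2 \tilde p \in \LT 2 H$, and elliptic regularity applied twice (first to $-\Delta \tilde p = \tilde q$ with $\partial_{\n}\tilde p = 0$, then to $-\Delta \tilde q = \Delta^2 \tilde p$ with $\partial_{\n}\tilde q = 0$) gives $\tilde q \in \LT 2 W$ and $\tilde p \in \LT 2 {\Hx 4}$. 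The main obstacle is precisely the low regularity of $p(T)$ when $\alpha_2 > 0$: since it only belongs to $\Lx\infty \hookrightarrow H$, the corresponding final datum is just barely enough for the weak formulation and prevents us from obtaining a strong solution, which is exactly why the case $\alpha_2 = 0$ must be treated separately for the second-order optimality analysis.
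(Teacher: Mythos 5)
Your proposal is correct and follows essentially the same route as the paper: a Faedo--Galerkin scheme on the eigenbasis of $\mathcal{N}$, an $H$-energy estimate (your test with $\tilde p$ is exactly the paper's combined test with $p$ and $q$), a $\partial_t p$-type estimate for the strong regularity when $\alpha_2=0$, and then comparison plus elliptic regularity for $q\in\LT 2 W$ and $p\in\LT 2 {\Hx 4}$. The time reversal and the substitution $q=-\Delta p$ into a single fourth-order parabolic equation are only cosmetic reformulations, and skipping the paper's intermediate $V$-level estimate is harmless since it is not needed to close the argument.
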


	\begin{proof}
		Since \eqref{eq:p}--\eqref{eq:fcadj} is a backward linear problem, one can rigorously prove the existence of a unique weak solution through a convenient Faedo--Galerkin discretisation scheme. 
		For the sake of brevity, here we just obtain the formal \emph{a priori} bounds that are needed to pass to the limit in the discretisation scheme. 
		
		\noindent
		\textsc{First estimate.} We take $w = p$ in \eqref{eq:varform:p} and $w = q$ in \eqref{eq:varform:q} and we sum the corresponding equations to obtain:
		\begin{equation}
			\label{eq:adj:est1}
				- \mezzo \ddt \norm{p}^2_H + \norm{q}^2_H
				= (\lambdab p, p)_H + (F''(\phib) q, p)_H + \alpha_1 (\chiod (\phib - f), p)_H. 
		\end{equation}
		Then, we estimate the terms on the right-hand side of \eqref{eq:adj:est1} by means of H\"older and Young's inequalities and Remark \ref{rmk:Fderivatives}. This gives
		\begin{align*}
			& (\lambdab p, p)_H \le \norm{\lambdab}_{\Lx\infty} \norm{p}^2_H \le C \norm{p}^2_H, \\
			& (F''(\phib) q, p)_H \le \norm{F''(\phib)}_{\Lx\infty} \norm{q}_H \norm{p}_H \le \mezzo \norm{q}^2_H + C \norm{p}^2_H, \\
			& \alpha_1 (\chiod (\phib - f), p)_H \le \mezzo \norm{p}^2_H + \mezzo \norm{\alpha_1 \chiod (\phib - f)}^2_H. 
		\end{align*}
		Hence, by integrating \eqref{eq:adj:est1} in $(t,T)$, for any given $t \in (0,T)$, and using the above estimates, we infer that
		\begin{align*}
			& \mezzo \norm{p(t)}^2_H + \mezzo \int_t^T \norm{q}^2_H \, \de s \\
			& \quad \le \norm{\alpha_2 \chiod (\phib(T) - f)}^2_H + C \int_0^T \norm{p}^2_H \, \de s + C \int_0^T \norm{\alpha_1 \chiod (\phib - f)}^2_H \, \de s. 
		\end{align*} 
		Since $\alpha_2 \chiod (\phib(T) - f) \in H$ and $\alpha_1 \chiod (\phib - f) \in \LT 2 H$, an application of Gronwall's Lemma yields 
		\begin{equation}
			\label{eq:adj:firstest}
				\norm{p}^2_{\LT \infty H} + \norm{q}^2_{\LT 2 H} \le C \left( \norm{\alpha_2 \chiod (\phib(T) - f)}^2_H + \norm{\alpha_1 \chiod (\phib - f)}^2_{\LT 2 H} \right).
		\end{equation}
		Moreover, by comparison in \eqref{eq:p} and \eqref{eq:q}, we also easily deduce that 
		\begin{equation}
			\label{eq:adj:pl2w} 
			\norm{p}^2_{\HT 1 {W^*} \cap \LT 2 W} \le C \left( \norm{\alpha_2 \chiod (\phib(T) - f)}^2_H + \norm{\alpha_1 \chiod (\phib - f)}^2_{\LT 2 H} \right).
		\end{equation}
		Estimates \eqref{eq:adj:firstest} and \eqref{eq:adj:pl2w} are then enough to pass to the limit in the discretisation framework and prove the existence of the desired weak solution, which is also unique, the problem being linear.  
		In particular, observe that, due to the embedding $\HT 1 {W^*} \cap \LT 2 W \hookrightarrow \CT 0 H$, the final condition makes sense in $H$. 

		\noindent
		\textsc{Second estimate.} From now on, we assume that $\alpha_2 = 0$, so that the final condition \eqref{eq:fcadj} becomes $p(T) = 0$.
		Taking $w = q$ in \eqref{eq:varform:p} and $w = \partial_t p$ in \eqref{eq:varform:q} and adding together the resulting identities, we find
		\begin{equation}
			\label{eq:adj:est2}
			- \mezzo \ddt \norm{\nabla p}^2_H + \norm{\nabla q}^2_H = - (\lambdab p, q)_H - (F''(\phib) q, q)_H + (\alpha_1 \chiod (\phib - f), q)_H. 
		\end{equation}
		By using H\"older and Young's inequalities, we estimate the terms on the right-hand side of \eqref{eq:adj:est2} in the following way:
		\begin{align*}
			& (\lambdab p, q)_H 
			\le \norm{\lambdab}_{\Lx\infty} \norm{p}_H \norm{q}_H
			\le C \norm{p}^2_H + C \norm{q}^2_H \\
			& (F''(\phib) q, q)_H \le \norm{F''(\phib)}_{\Lx\infty} \norm{q}^2_H \le C \norm{q}^2_H \\
			& (\alpha_1 \chiod (\phib - f), q)_H \le \mezzo \norm{q}^2_H + \mezzo \norm{\alpha_1 \chiod (\phib - f)}^2_H.
		\end{align*}
		Thus, by integrating \eqref{eq:adj:est2} in $(t,T)$, for any given $t \in (0,T)$, and by exploiting the above estimates, we infer that
		\begin{align*}
			\mezzo \norm{\nabla p (t)}^2_H + \int_t^T \norm{\nabla q}^2_H \, \de s \le C \int_0^T \left( \norm{p}^2_H + \norm{q}^2_H \right) \, \de s + C \int_0^T \norm{\alpha_1 \chiod (\phib - f)}^2_H \, \de s.
		\end{align*}
		Therefore, by using the previous estimate \eqref{eq:adj:firstest} on the right-hand side, we conclude that 
		\begin{equation}
			\label{eq:adj:plinfv}
			\norm{p}^2_{\LT \infty V} + \norm{q}^2_{\LT 2 V} \le C \norm{\alpha_1 \chiod (\phib - f)}^2_{\LT 2 H}.
		\end{equation}

		\noindent
		\textsc{Third estimate.} We now take $w = - \partial_t p$ in \eqref{eq:varform:p} and $w = - \Delta \partial_t p$ in \eqref{eq:varform:q}. Again, summing the resulting identities up gives
		\begin{equation}
			\label{eq:adj:est3}
			- \mezzo \ddt \norm{\Delta p}^2_H + \norm{\partial_t p}^2_H 
			= (\lambdab p, \partial_t p)_H + (F''(\phib) q, \partial_t p)_H - (\alpha_1 \chiod (\phib - f), \partial_t p)_H.
		\end{equation}
		Then, arguing as before, we estimate the terms on the right-hand side and we find
		\begin{align*}
			& (\lambdab p, \partial_t p)_H \le \norm{\lambdab}_{\Lx\infty} \norm{p}_H \norm{\partial_t p}_H \le \frac14 \norm{\partial_t p}^2_H + C \norm{p}^2_H \\
			& (F''(\phib) q, \partial_t p)_H \le \norm{F''(\phib)}_{\Lx\infty} \norm{q}_H \norm{\partial_t p}_H \le \frac14 \norm{\partial_t p}^2_H + C \norm{q}^2_H \\
			& (\alpha_1 \chiod (\phib - f), \partial_t p)_H \le \frac14 \norm{\partial_t p}^2_H + \norm{\alpha_1 \chiod (\phib - f)}^2_H. 
		\end{align*}
		Thus, by integrating \eqref{eq:adj:est3} in $(t,T)$, for any given $t \in (0,T)$, and by exploiting the above estimates, we infer that
		\begin{align*}
			\mezzo \norm{\Delta p (t)}^2_H + \frac14 \int_t^T \norm{\partial_t p}^2_H \, \de s \le C \int_0^T \left( \norm{p}^2_H + \norm{q}^2_H \right) \, \de s + C \int_0^T \norm{\alpha_1 \chiod (\phib - f)}^2_H \, \de s.
		\end{align*}
		Hence, by using the previous estimate \eqref{eq:adj:firstest} on the right-hand side, we conclude that 
		\begin{equation}
			\label{eq:adj:ph1h}
			\norm{p}^2_{\HT 1 H \cap \LT \infty W} \le C \norm{\alpha_1 \chiod (\phib - f)}^2_{\LT 2 H}.
		\end{equation}
		Finally, by comparison in \eqref{eq:p}, due to \eqref{eq:adj:ph1h}, we see that $\Delta q$ is now uniformly bounded in $\LT 2 H$, which implies that 
		\begin{equation}
			\label{eq:adj:ql2w}
			\norm{q}^2_{\LT 2 W} \le C \norm{\alpha_1 \chiod (\phib - f)}^2_{\LT 2 H}.
		\end{equation}
		Moreover, due to \eqref{eq:adj:ql2w}, applying the Laplacian to  \eqref{eq:q}, we also see that $\Delta^2 p$ is uniformly bounded in $\LT 2 H$. 
		Then, by standard elliptic regularity theory (see also \eqref{eq:bcadj}), we deduce that 
		\[
			\norm{p}^2_{\LT 2 {\Hx4}} \le C \norm{\alpha_1 \chiod (\phib - f)}^2_{\LT 2 H}.
		\] 
		The proof is complete.
	\end{proof}

	\begin{remark}
	\label{rmk:adjgen}
		For future convenience, we observe that the proof of Proposition \ref{prop:adjoint} can be easily generalised to a more general backward linear problem of the form:
		\begin{alignat}{2}
			& - \partial_t p - \Delta q + \lambdab p + F''(\phib) q = g_3 \quad && \hbox{in $Q_T$,} \label{eq:pgen} \\
			& q = - \Delta p \quad && \hbox{in $Q_T$,} \label{eq:qgen} \\
			& \partial_{\n} p = \partial_{\n} {q} = 0 \quad && \hbox{on $\Sigma_T$,} \label{eq:bcadjgen} \\
			& p(T) = g_4 \quad && \hbox{in $\Omega$.} \label{eq:fcadjgen}
		\end{alignat}
		where $g_3$ and $g_4$ are given sources ($g_3 = \alpha_1 \chiod (\phib - f)$ and $g_4 = \alpha_2 \chiod (\phib(T) - f)$, or $g_4 = 0$ if $\alpha_2 = 0$, in \eqref{eq:p}--\eqref{eq:fcadj}). 
		In particular, we have the following results:
		\begin{itemize}
			\item If $g_3 \in \LT 2 H$ and $g_4 \in H$, then there exists a unique weak solution to \eqref{eq:pgen}--\eqref{eq:fcadjgen} such that 
			\begin{align*}
				& \xi \in \HT 1 {W^*} \cap \LT \infty H \cap \LT 2 W, \\
				& \eta \in \LT 2 H,
			\end{align*}
			and the following estimate holds:
			\begin{equation}
				\label{eq:adjgen:estweak}
				\begin{split}
				& \norm{p}^2_{\HT 1 {W^*} \cap \LT \infty H \cap \LT 2 W} + \norm{q}^2_{\LT 2 H} \\
				& \quad \le C \left( \norm{g_3}^2_{\LT 2 H} + \norm{g_4}^2_H \right).
				\end{split}
			\end{equation}
			\item If $g_2 \in \LT 2 H$ and $g_4 \in W$, then there exists a unique strong solution to \eqref{eq:pgen}--\eqref{eq:fcadjgen} such that 
			\begin{align*}
				& p \in \HT 1 H \cap \LT \infty W \cap \LT 2 {\Hx4}, \\
				& q \in \LT 2 W,
			\end{align*}
			and the following estimate holds:
			\begin{equation}
				\label{eq:adjgen:eststrong}
				\begin{split}
				& \norm{p}^2_{\HT 1 H \cap \LT \infty W \cap \LT 2 {\Hx4}} + \norm{q}^2_{\LT 2 W} \\
				& \quad \le C \left( \norm{g_3}^2_{\LT 2 H} + \norm{g_4}^2_W \right).
				\end{split}
			\end{equation}
		\end{itemize}
	\end{remark}

	\subsection{First-order optimality conditions}

	Having the adjoint system at our disposal, we can now state and prove the main result of this section. 

	\begin{theorem}
		\label{thm:optcond_first}
		Let \ref{ass:omega}--\ref{ass:f}, \ref{ass:initial2} and \ref{ass:c1}--\ref{ass:c2} hold. 
		If $\lambdazb \in \Uad$ is an optimal control for (CP) and  $(\phib, \mub) \in \XX$ is the corresponding optimal state, i.e.~the strong solution to \eqref{eq:phi}--\eqref{eq:ic} with such $\lambdazb$, then the optimal control $\lambdazb$ satisfies the following variational inequality:
		\begin{equation}
			\label{eq:varineq}
			\int_0^T \int_{\Omega\setminus D} \alpha_1 p (f - \phib) (\lambda_0 - \lambdazb) \, \de x \, \de t 
			- \int_{\Omega \setminus D} \frac{\beta}{\lambdazb^3} (\lambda_0 - \lambdazb) \, \de x \ge 0 
			\quad \hbox{for any $\lambda_0 \in \Uad$,}
		\end{equation} 
        where $(p,q)$ is the adjoint variable to $(\phib, \mub)$, i.e.~the solution to the adjoint system \eqref{eq:p}--\eqref{eq:fcadj}. 
	\end{theorem}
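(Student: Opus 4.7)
\bigskip

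\noindent\textbf{Proof sketch (plan).}
The plan is to proceed via the standard adjoint calculus for optimal control: introduce the reduced cost functional $\Jred(\lambda_0) := \Jcal(\Scal(\lambda_0), \lambda_0)$ defined on $\Uad$, compute its Fr\'echet derivative via chain rule using Theorem \ref{thm:frechet}, write the convex-set optimality condition $\D \Jred(\lambdazb)[\lambda_0 - \lambdazb] \ge 0$ for all $\lambda_0 \in \Uad$, and finally eliminate the linearised state $\xi$ by a duality identity with the adjoint variable $p$. Since the term $\beta/(2\lambda_0^2)$ in $\Jcal$ is smooth and convex on an open set containing $\Uad$, its Gateaux derivative in direction $h_0$ is straightforwardly $-\beta\int_{\Omega\setminus D} h_0 / \bar{\lambda}_0^3 \,\de x$.

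First, exploiting the Fr\'echet differentiability of $\Scal : \mathcal{U}_R \to \WW$ proved in Theorem \ref{thm:frechet}, together with the trivial differentiability of the tracking and penalisation contributions, I would show that $\Jred$ is Fr\'echet differentiable at $\lambdazb$ with
\[
\D \Jred(\lambdazb)[h_0] = \alpha_1 \int_{Q_T} \chiod (\phib - f) \, \xi \, \de x \, \de t + \alpha_2 \int_\Omega \chiod (\phib(T) - f) \, \xi(T) \, \de x - \beta \int_{\Omega \setminus D} \frac{h_0}{\lambdazb^3} \, \de x,
\]
where $(\xi, \eta) = \D \Scal(\lambdazb)[h_0]$ solves the linearised system \eqref{eq:xi}--\eqref{eq:iclin} with datum $h$ extended by zero on $D$ as in \eqref{eq:lambdab}. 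The convexity of $\Uad$ and the necessary condition for a minimum then yield $\D \Jred(\lambdazb)[\lambda_0 - \lambdazb] \ge 0$ for every $\lambda_0 \in \Uad$.

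Next comes the heart of the argument: I would test the adjoint equation \eqref{eq:p} with $\xi$ and the linearised equation \eqref{eq:xi} with $p$, integrate over $Q_T$, and subtract. The integration by parts in time uses $\xi(0) = 0$ (from \eqref{eq:iclin}) and the final condition $p(T) = \alpha_2 \chiod (\phib(T) - f)$, producing the boundary contribution $\alpha_2 \int_\Omega \chiod(\phib(T) - f) \xi(T) \, \de x$. For the spatial integrations by parts, I would use the homogeneous Neumann conditions in \eqref{eq:bclin} and \eqref{eq:bcadj}, together with the identities $\eta = - \Delta \xi + F''(\phib) \xi$ and $q = - \Delta p$: all terms involving $\Delta \xi$, $\Delta p$, $F''(\phib)\xi q$ and $\bar{\lambda} p \xi$ cancel pairwise, so that the only surviving terms are the tracking contributions on one side and $\int_{Q_T} p \, h (f - \phib) \, \de x \, \de t$ on the other. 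This yields the duality identity
\[
\alpha_1 \int_{Q_T} \chiod (\phib - f) \, \xi \, \de x \, \de t + \alpha_2 \int_\Omega \chiod(\phib(T) - f) \xi(T) \, \de x = \int_0^T \int_{\Omega \setminus D} p \, h_0 (f - \phib) \, \de x \, \de t.
\]
Plugging this into the expression for $\D \Jred(\lambdazb)$ and applying the optimality inequality with $h_0 = \lambda_0 - \lambdazb$ delivers \eqref{eq:varineq}.

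The main obstacle, as is typical for Cahn--Hilliard-type control problems, is making the duality argument rigorous in the case $\alpha_2 > 0$, where Proposition \ref{prop:adjoint} only provides the weak regularity $p \in \HT 1 {W^*} \cap \LT \infty H \cap \LT 2 W$ and $q \in \LT 2 H$. I would handle this by pairing $\langle -\partial_t p, \xi \rangle_W$ against $\xi \in \LT 2 W$ (which is granted by Proposition \ref{prop:linearised}), and by interpreting $(\Delta q, \xi)_H = (q, \Delta \xi)_H$ in the $L^2$-sense through the Neumann boundary conditions and the regularity $\xi \in \LT 2 W$, $q \in \LT 2 H$. Endpoint values at $t = 0$ and $t = T$ are controlled via the embedding $\HT 1 {W^*} \cap \LT 2 W \hookrightarrow \CT 0 H$ recalled in Proposition \ref{prop:adjoint}, which makes $(p(T), \xi(T))_H$ meaningful. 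All the remaining algebraic identities are routine and exploit only the bounds of Theorem \ref{thm:strongsols}, \eqref{derbounds} and \ref{ass:lambda}.
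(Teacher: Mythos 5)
Your proposal is correct and follows essentially the same route as the paper: reduced cost functional plus chain rule via Theorem \ref{thm:frechet}, the convex-set necessary condition $\D\Jred(\lambdazb)[\lambda_0-\lambdazb]\ge 0$, and elimination of the linearised state $\xi$ through the adjoint duality identity, with the same care about pairing the weak adjoint solution (tested in $W$) against the strong linearised solution when $\alpha_2>0$. Your duality identity and the resulting inequality agree with the one actually derived in the paper's proof (note that the factor $\alpha_1$ displayed in \eqref{eq:varineq} does not appear in the derivation there either).
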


	\begin{proof}
		Observe that the cost functional $\Jcal$ is convex and Fr\'echet differentiable in the space $\CT 0 H \times \mathcal{U}_R$, where $\mathcal{U}_R \subset \Lx\infty$ is an open bounded set such that $\Uad \subset \mathcal{U}_R$ and $\lambda_0 > 0$ for any $\lambda_0 \in \mathcal{U}_R$.
		Moreover, thanks to Theorem \ref{thm:frechet}, we know that the solution operator $\Scal$ is Fr\'echet differentiable from $\mathcal{U}_R$ to $\WW$. 
		Consequently, since $\HT 1 {V^*} \cap \LT 2 V \hookrightarrow \CT 0 H$ (cf. the definition of $\WW$), we also have that the operator $\Scal_1$, which selects the first component of $\Scal$, is Fr\'echet differentiable from $\mathcal{U}_R$ to $\CT 0 H$. 
		Therefore, we can consider the reduced cost functional $\Jcal_{\text{red}}: \mathcal{U}_R \to \R$, defined as 
		\[
			\Jcal_{\text{red}}(\lambda_0) = \Jcal(\Scal_1(\lambda_0), \lambda_0),
		\]
		which is Fr\'echet differentiable in $\mathcal{U}_R$ by the chain rule. Thus, we can rewrite the optimal control problem (CP) through the reduced cost functional as the following minimisation problem 
		\[
			\argmin_{\lambda_0 \in \, \Uad} \Jcal_{\text{red}}(\lambda_0).
		\]
		Recalling now that $\Uad$ is convex and $\Jcal_{\text{red}}$ is Fr\'echet differentiable, if $\lambdazb$ is optimal, then it has to satisfy the necessary optimality condition 
		\[
			\D \Jcal_{\text{red}}(\lambdazb)[\lambda_0 - \lambdazb] \ge 0 \quad \hbox{for any $\lambda_0 \in \Uad$.}
		\] 
		By computing explicitly the derivative of $\Jcal_{\text{red}}$, we get that, for any $\lambda_0 \in \Uad$,
		\begin{equation*}
			\int_0^T \int_\Omega \alpha_1 \chiod (\phib - f) \xi \, \de x \, \de t
			+ \int_\Omega \alpha_2 \chiod (\phib(T) - f) \xi(T) \, \de x 
			- \int_{\Omega \setminus D} \frac{\beta}{\lambdazb^3}(\lambda_0 - \lambdazb) \, \de x \ge 0, 
		\end{equation*}
		where $\xi = \D \Scal_1(\phib)[\lambda_0 - \lambdazb]$ is the first component of the solution $(\xi, \eta)$ to the linearised problem \eqref{eq:xi}--\eqref{eq:iclin} corresponding to $(\phib, \mub)$ and $h_0 = \lambda_0 - \lambdazb$.
		Observe that the right-hand side and the final condition of the adjoint problem \eqref{eq:p}--\eqref{eq:fcadj} appear in the above inequality. 
		Thus, by substituing equations \eqref{eq:varform:p} and \eqref{eq:fcadj} into the previous expression and accounting for the low regularity of the adjoint variables, we deduce that for any $\lambda_0 \in \Uad$
		\begin{align*}
			& \int_0^T \duality{- \partial_t p, \xi}_W \, \de t + \int_0^T \int_\Omega \left( - q \Delta \xi + \lambdab p \, \xi + F''(\phib) q \, \xi \right) \, \de x \, \de t \\
			& \quad + \int_\Omega p(T) \xi(T) \, \de x - \int_{\Omega \setminus D} \frac{\beta}{\lambdazb^3}(\lambda_0 - \lambdazb) \, \de x \ge 0,
		\end{align*}
		where we used the boundary condition \eqref{eq:bclin}.
		Integrating by parts in time and using \eqref{eq:iclin}, we obtain that, for any $\lambda_0 \in \Uad$,
		\begin{align*}
			\int_0^T \int_\Omega \left( p \partial_t \xi - q \Delta \xi + \lambdab p \, \xi + F''(\phib) q \, \xi \right) \, \de x \, \de t - \int_{\Omega \setminus D} \frac{\beta}{\lambdazb^3}(\lambda_0 - \lambdazb) \, \de x \ge 0.
		\end{align*}
		Recalling now the adjoint equation \eqref{eq:q}, we integrate by parts, using the boundary conditions \eqref{eq:bclin} and \eqref{eq:bcadj}, and we find
		\begin{align*}
			0 = \int_0^T \int_\Omega ( - q - \Delta p) \eta \, \de x \, \de t = \int_0^T \int_\Omega ( - q \eta - p \Delta \eta ) \, \de x \, \de t. 
		\end{align*} 
		Hence, combining the above identity with the previous inequality and regrouping some terms, we have that, for any $\lambda_0 \in \Uad$,
		\begin{align*}
			& \int_0^T \int_\Omega \left( \partial_t \xi - \Delta \eta + \lambdab \xi \right) p \, \de x \, \de t
			+ \int_0^T \int_\Omega (- \eta - \Delta \xi + F''(\phib) \xi) q \, \de x \, \de t \\
			& \quad - \int_{\Omega \setminus D} \frac{\beta}{\lambdazb^3}(\lambda_0 - \lambdazb) \, \de x \ge 0.
		\end{align*}
		We now notice that the linearised equations \eqref{eq:xi}--\eqref{eq:eta}, up to their source terms, have appeared in the above inequality. 
		Indeed, we can substitute them, recalling that $h_0 = \lambda_0 - \lambdazb$ in our context, and infer that 
		\begin{align*}
			\int_0^T \int_\Omega p (f - \phib) (\lambda - \lambdab) \, \de x \, \de t 
			- \int_{\Omega \setminus D} \frac{\beta}{\lambdazb^3} (\lambda_0 - \lambdazb) \, \de x \ge 0, 
		\end{align*}
        for any $\lambda_0 \in \Uad$. This is exactly inequality \eqref{eq:varineq} and the proof follows.
	\end{proof}

	\section{Second-order optimality conditions}
    \label{OC2}

	Observe that problem \eqref{eq:phi}--\eqref{eq:ic} is nonlinear and so is the control-to-state operator $\Scal$. Therefore, our optimal control problem (CP) is not convex and the first-order conditions given by Theorem \ref{thm:optcond_first} are only necessary and not sufficient. 
	In this section, we derive second-order sufficient conditions to characterise the optimal solutions of (CP). 
    In general, such conditions require more regularity on the solutions to the adjoint system \eqref{eq:p}--\eqref{eq:fcadj}. In view of Remark \ref{rmk:adjgen}, we observe that one could keep $\alpha_2 > 0$ if, for some particular application, the target image $f$ could be replaced by a regularised version $\ftil \in \Hxod2$, for instance through the system \eqref{eq:f}--\eqref{eq:bcf2}. 
    However, it is not clear if this could be relevant from the point of view of applications (e.~g.~numerical approximations), therefore we stick to the standard case $f \in \Lxod\infty$.
    This choice forces us to assume that 
	\begin{enumerate}[font = \bfseries, label = C\arabic*., ref = \bf{C\arabic*}]
		\setcounter{enumi}{2}
		\item\label{ass:c3} $\alpha_2 = 0$.
	\end{enumerate}
    Note that most classical PDE-based inpainting methods impose either a boundary condition on $\Omega \setminus D$ or a penalty term that keeps $\phi$ close to $f$ for all time. Hence, many of the references in the literature can be viewed as the special case $\alpha_2 = 0$.

	\subsection{Control-to-costate operator}

	In view of computing the second-order derivatives of the cost functional $\Jcal$, we now introduce the \emph{control-to-costate operator}. 
	Then, for any $\lambda_0\in\Uad$, we define
	\[
		\Tcal(\lambda_0) = (p,q),
	\]
	where $(p,q)$ is the solution to the adjoint problem \eqref{eq:p}--\eqref{eq:fcadj} with $\lambdazb = \lambda_0$, $(\phib, \mub) = \Scal(\lambda_0)$ and $\alpha_2 = 0$.  Recalling Proposition \ref{prop:adjoint}, we have that
    \[
		\Tcal: \mathcal{U}_R \supset \Uad \to \YY = (\HT 1 H \cap \LT \infty W \cap \LT 2 {\Hx4}) \times \LT 2 W.
	\]

	The rest of this section is devoted to study the differentiability properties of the operator $\Tcal$. First, we establish its Lipschitz continuity.

	\begin{proposition}
		\label{prop:lipcont_costate}
		Assume that \ref{ass:omega}--\ref{ass:f}, \ref{ass:initial2} and \ref{ass:c1}--\ref{ass:c3} hold.
		Let $\lambda_0^1, \lambda_0^2 \in \mathcal{U}_R$ and let $(p_1, q_1)$ and $(p_2, q_2)$ be the corresponding solutions to the adjoint problem \eqref{eq:p}--\eqref{eq:fcadj}. 
		Then, there exists a constant $K > 0$, depending only on $R$ and the parameters of the system, such that 
		\begin{equation}
			\label{eq:lipadj:result}
			\norm{p_1 - p_2}^2_{\HT 1 H \cap \LT \infty W \cap \LT 2 {\Hx4}} + \norm{q_1 - q_2}^2_{\LT 2 W} \le K \norm{\lambda_0^1 - \lambda_0^2}^2_{L^2(\Omega \setminus D)}.
		\end{equation}   
	\end{proposition}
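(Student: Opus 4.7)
The plan is to view the system satisfied by the differences $p := p_1 - p_2$, $q := q_1 - q_2$ as a special case of the general backward linear problem \eqref{eq:pgen}--\eqref{eq:fcadjgen} and then apply the strong solution estimate \eqref{eq:adjgen:eststrong} of Remark \ref{rmk:adjgen}. Setting $\bar{\lambda}_i$ as in \eqref{eq:lambdab} from $\lambda_0^i$ and $(\phi_i, \mu_i) = \Scal(\lambda_0^i)$, and subtracting the two adjoint systems \eqref{eq:p}--\eqref{eq:fcadj}, I obtain a backward problem for $(p,q)$ with coefficients $(\bar{\lambda}_1, \phi_1)$, vanishing final datum $g_4 = 0 \in W$ (thanks to \ref{ass:c3}), and source
\[
g_3 = \alpha_1 \chiod(\phi_1 - \phi_2) - (\bar{\lambda}_1 - \bar{\lambda}_2)\, p_2 - \bigl(F''(\phi_1) - F''(\phi_2)\bigr) q_2.
\]
The strong estimate \eqref{eq:adjgen:eststrong} then reduces matters to controlling $\|g_3\|^2_{\LT 2 H}$ by $C \|\lambda_0^1 - \lambda_0^2\|^2_{L^2(\Omega\setminus D)}$, observing that $\|\bar{\lambda}_1 - \bar{\lambda}_2\|_H$ coincides with this latter norm since $\bar{\lambda}_i$ vanishes on $D$.

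Each of the three summands in $g_3$ can be handled with the tools already at hand. The first is immediately bounded in $\LT 2 H$ by the continuous dependence estimate \eqref{eq:contdep:result}. For the second, I would exploit the uniform bound of $p_2$ in $\LT\infty W$ provided by Proposition \ref{prop:adjoint}, combined with the Sobolev embedding $W \hookrightarrow \Lx\infty$ in two dimensions, to factor $p_2$ out in the $\Lx\infty$ norm and reduce to $\|\bar{\lambda}_1 - \bar{\lambda}_2\|_H^2$. For the third, the strict separation property \eqref{eq:separation}, the local Lipschitz continuity of $F''$ on $[-1+\delta, 1-\delta]$, and again $W \hookrightarrow \Lx\infty$ yield $\|F''(\phi_1) - F''(\phi_2)\|_{\Lx\infty} \le C \|\phi_1 - \phi_2\|_W$ pointwise in time; coupled with the uniform bound of $q_2$ in $\LT 2 H$ from Proposition \ref{prop:adjoint} and the continuous dependence estimate \eqref{eq:contdep:result} for $\phi_1 - \phi_2$ in $\LT\infty W$, this closes the bound after an application of H\"older's inequality in time.

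The only real obstacle I anticipate is bookkeeping rather than genuine analytical difficulty: one must verify that the constants in the invoked results (Theorem \ref{thm:strongsols}, Theorem \ref{thm:contdep}, Proposition \ref{prop:adjoint}, and estimate \eqref{eq:adjgen:eststrong}) are uniform as $\lambda_0^i$ ranges in the bounded set $\mathcal{U}_R$. This reduces to observing that the separation constant $\delta$ in \eqref{eq:separation} depends only on the fixed initial energy (see Remark \ref{rmk:regularity}), that the $\Lx\infty$ bound of $\bar{\lambda}_i$ is controlled by $R$, and that the sources $\alpha_1 \chiod(\phi_i - f)$ driving the adjoint estimates of Proposition \ref{prop:adjoint} are uniformly bounded in $\LT 2 H$ by Theorem \ref{thm:strongsols}. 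Once this uniformity is in place, summing the three term-by-term bounds immediately yields \eqref{eq:lipadj:result}.
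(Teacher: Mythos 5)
Your proposal is correct and follows essentially the same route as the paper: you subtract the two adjoint systems, identify the difference problem as an instance of the general backward linear problem \eqref{eq:pgen}--\eqref{eq:fcadjgen} with the same source $g_3$ and $g_4=0$, apply \eqref{eq:adjgen:eststrong}, and bound the three pieces of $g_3$ via the continuous dependence estimate \eqref{eq:contdep:result}, the $\LT\infty W$ bound on $p_2$, and the Lipschitz continuity of $F''$ under the separation property. The only (inconsequential) difference is in the H\"older pairing for the third term, where the paper uses $\Lx4$--$\Lx4$ with $q_2\in\LT2V$ while you use $\Lx\infty$--$\Lx2$ with $q_2\in\LT2H$; both close the estimate.
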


	\begin{proof}
		We set 
		\[
			p := p_1 - p_2, \qquad q := q_1 - q_2,
		\]
		and we define the extensions $\lambda_1$ and $\lambda_2$ of $\lambda_0^1$ and $\lambda_0^2$ (see \eqref{eq:lambdab}). 
		Then, it is straightforward to check that these new variables formally satisfy the following problem:
		\begin{alignat}{2}
			& - \partial_t p - \Delta q + \lambdab_1 p + F''(\phib_1) q \nonumber \\
			& \quad = \alpha_1 \chi_{\Omega \setminus D} (\phib_1 - \phib_2) - (\lambda_1 - \lambda_2) p_2 - (F''(\phib_1) - F''(\phib_2)) q_2 \quad && \hbox{in $Q_T$,} \label{eq:p2} \\
			& q = - \Delta p \quad && \hbox{in $Q_T$,} \label{eq:q2} \\
			& \partial_{\n} p = \partial_{\n} {q} = 0 \quad && \hbox{on $\Sigma_T$,} \label{eq:bcadj2} \\
			& p(T) = 0 \quad && \hbox{in $\Omega$.} \label{eq:fcadj2}
		\end{alignat}
		Observe that this problem has the same formal structure as the general adjoint problem \eqref{eq:pgen}--\eqref{eq:fcadjgen} introduced in Remark \ref{rmk:adjgen}, provided we set
		\[
			g_3 = \alpha_1 \chi_{\Omega \setminus D} (\phib_1 - \phib_2) - (\lambda_1 - \lambda_2) p_2 - (F''(\phib_1) - F''(\phib_2)) q_2 
			\quad \text{and} \quad 
			g_4 = 0.
		\]
		Hence, if $g_3 \in \LT 2 H$, then, by Remark \ref{rmk:adjgen}, estimate \eqref{eq:adjgen:eststrong} holds, namely 
		\[
			\norm{p}^2_{\HT 1 H \cap \LT \infty W \cap \LT 2 {\Hx4}} + \norm{q}^2_{\LT 2 W} \le C \norm{g_3}^2_{\LT 2 H}.
		\]
		Consequently, if we prove that 
		\begin{equation}
			\label{eq:lipadj:aim}
			\norm{g_3}^2_{\LT 2 H} \le C \norm{\lambda_1 - \lambda_2}^2_H,
		\end{equation}
		then \eqref{eq:lipadj:result} immediately follows.
		Indeed, we have that 
		\begin{align*}
			\norm{g_3}^2_{\LT 2 H}
			& = \int_0^T \norm{g_3}^2_H \, \de t \\
			& \le C \int_0^T \norm{\alpha_1 \chiod (\phib_1 - \phib_2)}^2_H \, \de t 
			+ C \int_0^T \norm{(\lambda_1 - \lambda_2) p_2}^2_H \, \de t \\
			& \quad + C \int_0^T \norm{(F''(\phib_1) - F''(\phib_2))q_2}^2_H \, \de t \\
			& \le C \int_0^T \norm{\phib_1 - \phib_2}^2_H \, \de t 
			+ C \int_0^T \norm{p_2}^2_{\Lx\infty} \norm{\lambda_1 - \lambda_2}^2_H \, \de t \\
			& \quad + C \int_0^T \norm{\phib_1 - \phib_2}^2_{\Lx4} \norm{q_2}^2_{\Lx4} \, \de t \\
			& \le C \int_0^T \norm{\phib_1 - \phib_2}^2_H \, \de t + CT \norm{p_2}^2_{\LT \infty W} \norm{\lambda_1 - \lambda_2}^2_H \\
			& \quad + C \norm{\phib_1 - \phib_2}^2_{\LT \infty V} \int_0^T \norm{q_2}^2_V \, \de t \\
			& \le C \norm{\lambda_1 - \lambda_2}^2_H,
		\end{align*}
		where we used H\"older's inequality, the Sobolev embeddings $V \hookrightarrow \Lx4$ and $W \hookrightarrow \Lx\infty$, Remark \ref{rmk:Fderivatives}, the uniform bounds for $p_2 \in \LT \infty W$ and $q_2 \in \LT 2 V$ given by Proposition \ref{prop:adjoint}, and estimate \eqref{eq:contdep:result}. 
		This concludes the proof.
	\end{proof}

	Next, we want to show that $\Tcal$ is also continuously Fr\'echet differentiable. 
	As an \emph{ansatz} for the Fr\'echet derivative of the control-to-costate operator, we introduce a linearised version of the adjoint problem corresponding to our original control problem.
	Indeed, we fix an adjoint state $(\pb, \qb)$ corresponding to $\lambdazb \in \Uad$, and $(\phib, \mub) = \Scal(\lambdazb)$, and we linearise near $\lambdazb$:
	\[
		p = \pb + P, \quad q = \qb + Q, \quad \phi = \phib + \xi, \quad \lambda_0 = \lambdazb + h_0,
	\]
	with $h_0 \in L^\infty(\Omega \setminus D)$ such that $\norm{h_0}_{L^\infty(\Omega \setminus D)} < \lambda_{\text{min}}$. 
	Then, by approximating the non-linear terms at the first order using their Taylor expansions, we see that $(P, Q)$ formally satisfies the problem
	\begin{alignat}{2}
		& - \partial_t P - \Delta Q + \lambdab P + F''(\phib) Q = \alpha_1 \chiod \xi - h \pb - F'''(\phib) \xi \, \qb \quad && \hbox{in $Q_T$,} \label{eq:P} \\
		& Q = - \Delta P \quad && \hbox{in $Q_T$,} \label{eq:Q} \\
		& \partial_{\n} P = \partial_{\n} Q = 0 \quad && \hbox{on $\Sigma_T$,} \label{eq:bcadjlin} \\
		& P(T) = 0 \quad && \hbox{in $\Omega$,} \label{eq:fcadjlin}
	\end{alignat}
	where $\xi$ is the solution to the linearised problem \eqref{eq:xi}--\eqref{eq:iclin} and we define the extensions $\lambdab$ and $h$ of $\lambdazb$ and $h_0$, respectively, as in \eqref{eq:lambdab}.
    
	We now prove the well-posedness of the backward linear problem \eqref{eq:P}--\eqref{eq:fcadjlin} in the more general case in which $h \in \Lx2$ is a given function.

	\begin{proposition}
		\label{prop:adjlinear}
		Let \ref{ass:omega}--\ref{ass:f}, \ref{ass:initial2} and \ref{ass:c1}--\ref{ass:c3} hold.
		Consider the strong solution $(\pb, \qb) \in \YY$ to the adjoint problem \eqref{eq:p}--\eqref{eq:fcadj} that corresponds to $\lambdazb \in \Uad$ and $(\phib, \mub) = \Scal(\lambdazb) \in \XX$.
		Moreover, given $h \in \Lx2$, consider the strong solution $(\xi, \eta) \in \YY$ to the linearised problem \eqref{eq:xi}--\eqref{eq:iclin} corresponding to $\lambdazb$ and $(\phib, \mub)$.

		Then, for any $h \in \Lx2$, problem \eqref{eq:P}--\eqref{eq:fcadjlin} admits a unique strong solution such that 
		\begin{align*}
			& P \in \HT 1 H \cap \LT \infty W \cap \LT 2 {\Hx4}, \\
			& Q \in \LT 2 W,
		\end{align*}
		and the following estimate holds:
		\begin{equation}
			\label{eq:adjlin:result}
			\norm{P}^2_{\HT 1 H \cap \LT \infty W \cap \LT 2 {\Hx4}} + \norm{Q}^2_{\LT 2 W} \le C \norm{h}^2_H.
		\end{equation}   
	\end{proposition}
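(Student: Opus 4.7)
\textbf{Proof plan for Proposition \ref{prop:adjlinear}.}
The strategy is to recognise that problem \eqref{eq:P}--\eqref{eq:fcadjlin} fits exactly into the framework of the general backward linear problem \eqref{eq:pgen}--\eqref{eq:fcadjgen} discussed in Remark \ref{rmk:adjgen}, with the identifications
\[
    g_3 := \alpha_1 \chiod \xi - h \pb - F'''(\phib) \xi \, \qb, \qquad g_4 := 0.
\]
Since $g_4 = 0 \in W$ trivially, it then suffices to verify that $g_3 \in \LT 2 H$ with a bound of the form $\norm{g_3}_{\LT 2 H}^2 \le C \norm{h}_H^2$, and then \eqref{eq:adjlin:result} will follow from the strong-solvability estimate \eqref{eq:adjgen:eststrong}. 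Uniqueness is immediate by linearity: the difference of two solutions satisfies the same system with $h = 0$ and zero final condition, hence vanishes by \eqref{eq:adjgen:eststrong}. As with Propositions \ref{prop:linearised} and \ref{prop:adjoint}, the rigorous justification of existence can be carried out through a Faedo--Galerkin scheme on the eigenspaces of $\mathcal{N}$, but I will only sketch the \emph{a priori} bound on $g_3$, which is the only substantive point.

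The first term is harmless: by \ref{ass:omega} and the linearised estimate \eqref{eq:lin:result} (which furnishes $\xi \in \LT \infty W \hookrightarrow \LT \infty H$),
\[
    \norm{\alpha_1 \chiod \xi}_{\LT 2 H}^2 \le C \norm{\xi}_{\LT \infty H}^2 \le C \norm{h}_H^2.
\]
For the second term, Proposition \ref{prop:adjoint} provides $\pb \in \LT \infty W \hookrightarrow \LT \infty {\Lx\infty}$ by the Sobolev embedding $W \hookrightarrow \Lx\infty$ in dimension two, whence
\[
    \norm{h \pb}_{\LT 2 H}^2 \le \int_0^T \norm{\pb}_{\Lx\infty}^2 \norm{h}_H^2 \, \de t \le C T \norm{h}_H^2.
\]
The only term requiring a bit of care is the last, because it couples $\xi$ and $\qb$. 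Here we use \eqref{derbounds} together with H\"older's inequality and the embedding $V \hookrightarrow \Lx 4$:
\[
    \norm{F'''(\phib) \xi \, \qb}_H \le \norm{F'''(\phib)}_{\Lx\infty} \norm{\xi}_{\Lx 4} \norm{\qb}_{\Lx 4} \le C \norm{\xi}_V \norm{\qb}_V.
\]
Integrating in time and using the uniform bound $\xi \in \LT\infty V$ from \eqref{eq:lin:result} together with $\qb \in \LT 2 W \hookrightarrow \LT 2 V$ from Proposition \ref{prop:adjoint}, one obtains
\[
    \int_0^T \norm{F'''(\phib) \xi \, \qb}_H^2 \, \de t \le C \norm{\xi}_{\LT\infty V}^2 \norm{\qb}_{\LT 2 V}^2 \le C \norm{h}_H^2.
\]

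Combining the three bounds yields $\norm{g_3}_{\LT 2 H}^2 \le C \norm{h}_H^2$, and an application of \eqref{eq:adjgen:eststrong} with $g_4 = 0$ gives exactly \eqref{eq:adjlin:result}. The main (mild) obstacle is just the cross term $F'''(\phib)\xi\qb$; everything else is a direct consequence of the linear estimates already proven for the state, linearised, and adjoint systems. No new ideas beyond Remark \ref{rmk:adjgen} and the uniform regularity of $\pb$, $\qb$, $\xi$ are required.
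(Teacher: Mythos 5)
Your proposal is correct and follows essentially the same route as the paper: identify \eqref{eq:P}--\eqref{eq:fcadjlin} with the general backward problem of Remark \ref{rmk:adjgen} via $g_3 = \alpha_1 \chiod \xi - h\pb - F'''(\phib)\xi\,\qb$, $g_4 = 0$, and bound $\norm{g_3}^2_{\LT 2 H} \le C\norm{h}^2_H$ term by term using the same embeddings ($W \hookrightarrow \Lx\infty$, $V \hookrightarrow \Lx4$) and the same uniform bounds on $\xi$, $\pb$, $\qb$ from \eqref{eq:lin:result} and Proposition \ref{prop:adjoint}. No substantive difference.
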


	\begin{proof}
		We observe that, if we set
		\[
			g_3 =  \alpha_1 \chiod \xi - h \pb - F''(\phib) \xi \, \qb
			\quad \text{and} \quad 
			g_4 = 0,
		\]
		then problem \eqref{eq:P}--\eqref{eq:fcadjlin} has the same formal structure as the general adjoint problem \eqref{eq:pgen}--\eqref{eq:fcadjgen} introduced in Remark \ref{rmk:adjgen}.
		Hence, if $g_3 \in \LT 2 H$, by Remark \ref{rmk:adjgen}, estimate \eqref{eq:adjgen:eststrong} holds, namely, 
		\[
			\norm{P}^2_{\HT 1 H \cap \LT \infty W \cap \LT 2 {\Hx4}} + \norm{Q}^2_{\LT 2 W} \le C \norm{g_3}^2_{\LT 2 H}.
		\]
		Consequently, we just have to prove that
		\begin{equation*}
			\norm{g_3}^2_{\LT 2 H} \le C \norm{h}^2_H
		\end{equation*}
		to readily deduce the wanted estimate \eqref{eq:adjlin:result}.
		Indeed, we have that 
		\begin{align*}
			\norm{g_3}^2_{\LT 2 H}
			& = \int_0^T \norm{g_3}^2_H \, \de t \\
			& \le C \int_0^T \norm{\alpha_1 \chiod \xi}^2_H \, \de t 
			+ C \int_0^T \norm{h \pb}^2_H \, \de t 
			+ C \int_0^T \norm{F'''(\phib) \xi \, \qb}^2_H \, \de t \\
			& \le C \int_0^T \norm{\xi}^2_H \, \de t 
			+ C \int_0^T \norm{\pb}^2_{\Lx\infty} \norm{h}^2_H \, \de t \\
			& \quad + C \int_0^T \norm{F'''(\phib)}^2_{\Lx\infty} \norm{\xi}^2_{\Lx4} \norm{\qb}^2_{\Lx4} \, \de t \\
			& \le C \int_0^T \norm{\xi}^2_H \, \de t + CT \norm{\pb}^2_{\LT \infty W} \norm{h}^2_H 
			+ C \norm{\xi}^2_{\LT \infty V} \int_0^T \norm{\qb}^2_V \, \de t \\
			& \le C \norm{h}^2_H.
		\end{align*}
		Here, we used H\"older's inequality, the Sobolev embeddings $V \hookrightarrow \Lx4$ and $W \hookrightarrow \Lx\infty$, Remark \ref{rmk:Fderivatives}, the uniform bounds $\pb \in \LT \infty W$ and $\qb \in \LT 2 V$ given by Proposition \ref{prop:adjoint}, and   estimate \eqref{eq:lin:result}. 
		This ends the proof.
	\end{proof}

	We are now ready to show that the control-to-costate operator $\Tcal$ is continuously Fr\'echet differentiable. 

	\begin{theorem}
		\label{thm:adjfrechet}
		Let \ref{ass:omega}--\ref{ass:f}, \ref{ass:initial2} and \ref{ass:c1}--\ref{ass:c3} hold. 
		Then, $\Tcal: \mathcal{U}_R \to \YY$ is Fr\'echet differentiable, namely, for any $\lambdazb \in \mathcal{U}_R$, there exists a unique Fr\'echet derivative $\D \Tcal(\lambdazb) \in \mathcal{L}(L^\infty(\Omega \setminus D), \YY)$ such that 
		\begin{equation*}
			\frac{\norm{\Tcal(\lambdazb + h_0) - \Tcal(\lambdazb) - \D \Tcal(\lambdazb)[h_0]}_{\YY}}{\norm{h_0}_{L^2(\Omega \setminus D)}} \to 0 \quad \text{as } \norm{h_0}_{L^2(\Omega \setminus D)} \to 0.
		\end{equation*}
		In particular, for any $h_0 \in L^\infty(\Omega \setminus D)$, the Fr\'echet derivative of $\Tcal$, denoted by $\D \Tcal(\ub)[h_0]$, is defined as the solution $(P, Q)$ to the problem \eqref{eq:P}--\eqref{eq:fcadjlin} corresponding to $(\pb, \qb) = \Tcal(\lambdazb)$, $(\phib, \mub) = \Scal(\lambdazb)$, $(\xi, \eta) = \D\Scal(\lambdazb)[h_0]$ and $h_0$.
		Moreover, $\D \Tcal$ is Lipschitz continuous as a function from $\mathcal{U}_R$ to the space $\mathcal{L}(L^\infty(\Omega \setminus D), \YY)$.
		More precisely, for any $\lambdazb^1, \lambdazb^2 \in \mathcal{U}_R$, the following estimate holds:
		\begin{equation}
			\label{eq:adjfre:c1}
			\norm{\D \Tcal(\lambdazb^1) - \D \Tcal(\lambdazb^2)}_{\mathcal{L}(L^\infty(\Omega \setminus D), \YY)} \le K \norm{\lambdazb^1 - \lambdazb^2}_{L^2(\Omega \setminus D)},
		\end{equation}
		for some constant $K > 0$ depending only on the parameters of the system and on $R$.
	\end{theorem}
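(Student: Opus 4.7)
The argument mirrors the proof of Theorem \ref{thm:frechet} for the control-to-state operator, with the strong-adjoint estimate \eqref{eq:adjgen:eststrong} in Remark \ref{rmk:adjgen} playing the role of the strong-linearised estimate \eqref{eq:lingen:eststrong}. I fix $\lambdazb \in \mathcal{U}_R$ together with a perturbation $h_0$ such that $\lambdazb + h_0 \in \mathcal{U}_R$, and I set $(\phih, \muh) := \Scal(\lambdazb + h_0)$, $(\phib, \mub) := \Scal(\lambdazb)$, $(\xi, \eta) := \D\Scal(\lambdazb)[h_0]$, $(p^h, q^h) := \Tcal(\lambdazb + h_0)$, $(\pb, \qb) := \Tcal(\lambdazb)$, and I take $(P, Q)$ to be the unique strong solution to \eqref{eq:P}--\eqref{eq:fcadjlin} granted by Proposition \ref{prop:adjlinear}. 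I then introduce the second-order remainders $\Psi := p^h - \pb - P$ and $Z := q^h - \qb - Q$, together with $\psi := \phih - \phib - \xi$, which from the proof of Theorem \ref{thm:frechet} already satisfies $\norm{\psi}^2_{\LT\infty V} \le C\norm{h_0}^4_{L^2(\Omega\setminus D)}$.

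Subtracting the adjoint equations solved by $p^h$, $\pb$ and $P$, and employing a second-order Taylor expansion of $F''$ around $\phib$ together with the identity $\phih - \phib = \xi + \psi$, I find that $(\Psi, Z)$ formally solves a backward problem of the form \eqref{eq:pgen}--\eqref{eq:fcadjgen} with $g_4 = 0$ (since $\alpha_2 = 0$ by \ref{ass:c3}) and source
\[
g_3 = \alpha_1 \chiod \psi - h (p^h - \pb) - F'''(\phib) \xi (q^h - \qb) - F'''(\phib) \psi \, q^h - \tilde R^h (\phih - \phib)^2 q^h,
\]
where $\tilde R^h := \int_0^1 (1-z) F^{(4)}(\phib + z(\phih - \phib)) \, \de z$ is uniformly bounded in $\Lqt\infty$ by the separation property and \eqref{derbounds}. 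The heart of the matter is to show $\norm{g_3}^2_{\LT 2 H} \le C \norm{h_0}^4_{L^2(\Omega\setminus D)}$: each summand is a product whose two factors each carry a norm of order $\norm{h_0}_{L^2(\Omega\setminus D)}$, which I obtain by invoking the estimate on $\psi$ above, the Lipschitz bounds $\norm{p^h - \pb}_{\LT\infty W}$ and $\norm{q^h - \qb}_{\LT 2 W}$ from Proposition \ref{prop:lipcont_costate}, and $\norm{\phih - \phib}_{\LT\infty W}$ from \eqref{eq:contdep:result}, while the $\LT 2 H$ norms are controlled by Hölder's inequality combined with the Sobolev embeddings $V \hookrightarrow \Lx 4$ and $W \hookrightarrow \Lx\infty$. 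A direct application of \eqref{eq:adjgen:eststrong} then gives $\norm{(\Psi, Z)}^2_{\YY} \le C \norm{h_0}^4_{L^2(\Omega\setminus D)}$, which yields the Fréchet differentiability with respect to the $L^2(\Omega\setminus D)$-norm and hence, a fortiori, with respect to $L^\infty(\Omega\setminus D)$.

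For the Lipschitz estimate \eqref{eq:adjfre:c1}, I take $\lambdazb^1, \lambdazb^2 \in \mathcal{U}_R$ and an arbitrary $h_0$ with $\norm{h_0}_{L^\infty(\Omega\setminus D)} = 1$, I denote by $(P_i, Q_i)$ the corresponding linearised adjoint solutions (associated with $(\phib_i, \mub_i)$, $(\pb_i, \qb_i)$ and $(\xi_i, \eta_i)$), and I consider the difference $(P_1 - P_2, Q_1 - Q_2)$. It satisfies a backward problem of the form \eqref{eq:pgen}--\eqref{eq:fcadjgen} (with coefficients $\lambdab_1$ and $\phib_1$) and source
\[
g_3 = \alpha_1 \chiod(\xi_1 - \xi_2) - h(\pb_1 - \pb_2) - (\lambdab_1 - \lambdab_2) P_2 - (F''(\phib_1) - F''(\phib_2)) Q_2 - \bigl[F'''(\phib_1)\xi_1\qb_1 - F'''(\phib_2)\xi_2\qb_2\bigr].
\]
After splitting the last bracket by a three-term telescoping, each contribution to $\norm{g_3}^2_{\LT 2 H}$ is bounded by $C \norm{\lambdazb^1 - \lambdazb^2}^2_{L^2(\Omega\setminus D)}$ using the Lipschitz estimates \eqref{eq:contdep:result}, \eqref{eq:fre:c1} and \eqref{eq:lipadj:result}, together with the uniform bounds supplied by Propositions \ref{prop:linearised} and \ref{prop:adjoint}, \eqref{derbounds}, and the same Hölder/Sobolev tools. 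A final application of \eqref{eq:adjgen:eststrong} and a supremum over $h_0$ of unit $L^\infty$-norm deliver \eqref{eq:adjfre:c1}.

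The main obstacle I foresee is the quadratic factor $(\phih - \phib)^2 q^h$ appearing in $g_3$ for the differentiability step, because $q^h$ only belongs to $\LT 2 V$ and no stronger regularity is available; this is overcome by exploiting the stronger regularity $\phih - \phib \in \LT\infty W$ given by \eqref{eq:contdep:result} combined with $W \hookrightarrow \Lx\infty$, which lets me factor out $\norm{\phih - \phib}^2_{\LT\infty W}$ and leave only the uniformly controlled $\norm{q^h}_{\LT 2 H}$. An analogous care is needed for the terms $F'''(\phib_2)\xi_2(\qb_1 - \qb_2)$ in the Lipschitz step. The rest of the argument is a careful but otherwise routine accounting of Hölder-type estimates following the template already used in Theorem \ref{thm:frechet}.
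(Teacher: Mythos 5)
Your proposal is correct and follows essentially the same route as the paper: the same remainder variables $\Psi=p^h-\pb-P$, $Z=q^h-\qb-Q$, the same reduction to the general backward problem of Remark \ref{rmk:adjgen} with estimate \eqref{eq:adjgen:eststrong}, and the same three-term telescoping for the Lipschitz part. Your decomposition of the source, $-F'''(\phib)\xi(q^h-\qb)-F'''(\phib)\psi\,q^h-\tilde R^h(\phih-\phib)^2q^h$, is an algebraically equivalent rearrangement of the paper's $-(F''(\phih)-F''(\phib))(q^h-\qb)-F'''(\phib)\psi\,\qb-R^h(\phih-\phib)^2\qb$, and all the individual bounds you indicate go through exactly as in the paper's proof.
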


	\begin{remark}
		\label{rmk:Text}
		Note that, as before, the definition of $\D \Tcal(\lambdazb)[h_0]$ as the corresponding solution to problem \eqref{eq:P}--\eqref{eq:fcadjlin} makes sense for any $h_0 \in L^2(\Omega \setminus D)$, and thus for any $h_0 \in L^\infty(\Omega \setminus D)$, due to Proposition \ref{prop:adjlinear}. 
		Then, $\D \Tcal(\lambdazb)$ is well defined as a bounded linear operator, that is, it belongs to $\mathcal{L}(L^\infty(\Omega \setminus D), \YY)$.
		Moreover, $\D \Tcal(\lambdazb)$ can also be extended in order to belong to $\mathcal{L}(L^2(\Omega \setminus D), \WW)$.
	\end{remark}

	\begin{proof}
		We follow the general strategy already used in the proof of Theorem \ref{thm:frechet}.
		Indeed, to prove the Fr\'echet differentiability we consider only small perturbations $h_0$ such that
		\[
			\norm{h_0}_{L^2(\Omega \setminus D)} \le \gamma,
		\]
		for some $\gamma > 0$.
		Next, we fix $\lambdazb \in \mathcal{U}_R$ and $h_0$ as above, set
		\begin{equation*}
			(p^h, q^h) := \Tcal(\lambdazb + h_0), \quad
			 (\pb, \qb) := \Tcal(\lambdazb), \end{equation*}
    and indicate by $(P, Q)$ the solution to \eqref{eq:P}--\eqref{eq:fcadjlin} with respect to $h_0$ and $\lambdazb$.
		
		Then, we need to show that there exists a constant $C > 0$ and an exponent $s > 2$ such that 
		\[
			\norm{(p^h, q^h) - (\pb, \qb) - (P, Q)}^2_{\YY} \le C \norm{h_0}^s_{L^2(\Omega \setminus D)}.
		\]
		If we introduce the additional variables (see Propositions \ref{prop:adjoint} and \ref{prop:adjlinear})
		\begin{align*}
			& \pi : = p^h - \pb - P \in \HT 1 H \cap \LT \infty W \cap \LT 2 {\Hx4}, \\
			& \upsilon : = q^h - \qb - Q \in \LT 2 W,
		\end{align*}
		then showing the above inequality is equivalent to show that 
		\begin{equation}
			\label{eq:adjfre:aim}
			\norm{(\pi, \upsilon)}^2_{\YY} \le C \norm{h_0}^s_{L^2(\Omega \setminus D)}.
		\end{equation}
		To such end, we observe that, $(\pi, \upsilon)$ satisfies the following problem:
		\begin{alignat}{2}
			& - \partial_t \pi - \Delta \upsilon + \lambdab \pi + F''(\phib) \upsilon = \alpha_1 \chiod \psi - h (p^h - \pb) - F^h \quad && \hbox{in $Q_T$,} \label{eq:pi} \\
			& \upsilon = - \Delta \pi \quad && \hbox{in $Q_T$,} \label{eq:ups} \\
			& \partial_{\n} \pi = \partial_{\n} \upsilon = 0 \quad && \hbox{on $\Sigma_T$,} \label{eq:bcadjfre} \\
			& \pi(T) = 0 \quad && \hbox{in $\Omega$,} \label{eq:fcadjfre}
		\end{alignat}
		where the extensions $\lambdab$ and $h$ are defined, as usual, like in \eqref{eq:lambdab}, $(\phi^h, \mu^h) = \Scal(\lambdazb + h_0)$, $(\phib, \mub) = \Scal(\lambdazb)$, $(\xi, \eta) = \D \Scal(\lambdazb)[h_0]$, $\psi = \phi^h - \phib - \xi$ is the same variable appearing in \eqref{eq:psi}--\eqref{eq:icfre} (see the proof of Theorem \ref{thm:frechet}), and 
		\[
			F^h := (F''(\phi^h) - F''(\phib)) q^h - F'''(\phib) \xi \qb.
		\] 
		Using now Taylor's expansion with integral remainder, we can rewrite $F^h$ as follows
		\[
			F^h = F'''(\phib) \psi + (F''(\phi^h) - F''(\phib))(q^h - \qb) + R^h (\phi^h - \phib)^2 \qb,
		\]
		where
		\[
			R^h := \int_0^1 (1 - z) F^{(4)}(\phib + z(\phi^h - \phib)) \, \de z.
		\]
		In particular, arguing as in Theorem \ref{thm:frechet}, we find the bound
		\[
			\norm{R^h}_{\Lqt\infty} \le C_\gamma.
		\]
		Moreover, we recall that throughout the proof of Theorem \ref{thm:frechet}, using \eqref{eq:fre:unifest} and subsequent estimates, we showed that 
		\begin{equation}
			\label{eq:adjfre:psi}
			\norm{\psi}^2_{\HT 1 {V^*} \cap \LT \infty V \cap \LT 2 W} \le C \norm{h}^4_H.
		\end{equation}

		We are now ready to perform some estimates on the solution to problem \eqref{eq:pi}--\eqref{eq:fcadjfre}. 
		Observe that this problem has the same formal structure as the general adjoint problem \eqref{eq:pgen}--\eqref{eq:fcadjgen}, provided we set
		\begin{align*}
			g_3 & := \alpha_1 \chiod \psi - h (p^h - \pb) - F^h \\
			& = \alpha_1 \chiod \psi - h (p^h - \pb) - F'''(\phib) \psi - (F''(\phi^h) - F''(\phib))(q^h - \qb) - R^h (\phi^h - \phib)^2 \qb.
		\end{align*} 
		Thus, by Remark \ref{rmk:adjgen}, we immediately deduce that 
		\[
			\norm{(\pi, \upsilon)}^2_{\YY} \le C \norm{g_3}^2_{\LT 2 H}.
		\]
		On the other hand, using H\"older's inequality, \eqref{eq:adjfre:psi}, the continuous dependence results obtained in Theorem \ref{thm:contdep} and Proposition \ref{prop:lipcont_costate}, Remark \ref{rmk:Fderivatives}, the Sobolev embeddings $V \hookrightarrow \Lx4$ and $W \hookrightarrow \Lx\infty$, and the regularity of $\qb$ given by Proposition \ref{prop:adjoint}, we can deduce the following estimate:
		\begin{align*}
			\norm{g_3}^2_{\LT 2 H}
			& \le C \int_0^T \alpha_1 \norm{\chiod}_{\Lx\infty} \norm{\psi}^2_H \, \de t 
			+ C \int_0^T \norm{p^h - \pb}^2_{\Lx\infty} \norm{h}^2_H \, \de t \\
			& \quad + C \int_0^T \norm{F'''(\phib)}^2_{\Lx\infty} \norm{\psi}^2_H \, \de t 
			+ C \int_0^T \norm{F''(\phi^h) - F''(\phib)}^2_{\Lx4} \norm{q^h - \qb}^2_{\Lx4} \, \de t \\
			& \quad + C \int_0^T \norm{R^h}^2_{\Lx\infty} \norm{\phi^h - \phib}^4_{\Lx4} \norm{\qb}^2_{\Lx\infty} \, \de t \\
			& \le C \norm{\psi}^2_{\LT 2 H}
			+ C \norm{h}^2_H \int_0^T \norm{p^h - \pb}^2_W \, \de t \\
			& \quad + C \norm{\phi^h - \phib}^2_{\LT \infty V} \int_0^T \norm{q^h - \qb}^2_V \, \de t 
			+ C \norm{\phi^h - \phib}^4_{\LT \infty V} \int_0^T \norm{\qb}^2_W \, \de t \\
			& \le C_\gamma \norm{h}^4_H, 
		\end{align*}
		which gives
		\[
			\norm{(\pi, \upsilon)}^2_{\YY} \le C \norm{h}^4_H,
		\]
		so that \eqref{eq:adjfre:aim} holds, i.e., the Fr\'echet differentiability of $\Tcal$.

		We now want to prove that $\D \Tcal$ is Lipschitz continuous. 
		To do this, we have to show that, given $\lambdazb^i \in \mathcal{U}_R$, with corresponding solutions $(\phib^i, \mub^i)$ and $(\pb^i, \qb^i)$, $i=1,2$, then for any $h_0 \in L^\infty(\Omega \setminus D)$ with $\norm{h_0}_{L^\infty(\Omega \setminus D)} = 1$ it holds that 
		\begin{equation}
			\label{eq:adjfre:lipaim}
			\norm{\D \Tcal(\lambdazb^1)[h_0] - \D \Tcal (\lambdazb^2)[h_0]}^2_{\YY} = \norm{(P_1, Q_1) - (P_2, Q_2)}^2_{\YY} \le C \norm{\lambdazb^1 - \lambdazb^2}^2_{L^2(\Omega \setminus D)}, 
		\end{equation}
		which implies \eqref{eq:adjfre:c1}. 
		Here, we called $(P_i, Q_i)$, $i=1,2$, the corresponding solutions to \eqref{eq:P}--\eqref{eq:fcadjlin} with $(\pb_i, \qb_i)$ and $(\phib_i, \mub_i)$, $i = 1,2$, respectively and the same $h_0$. 
		We now consider the problem formulated for $P := P_1 - P_2$ and $Q := Q_1 - Q_2$, which takes the following form (see also \eqref{eq:lambdab})
		\begin{alignat}{2}
			& - \partial_t P - \Delta Q + \lambdab_1 P + F''(\phib_1) Q \nonumber \\ 
			& \quad = \alpha_1 \chiod (\xi_1 - \xi_2) - h (\pb_1 - \pb_2) - (\lambdab_1 - \lambdab_2) P_2 - (F''(\phib_1) - F''(\phib_2)) Q_2 \nonumber \\
			& \quad - F'''(\phib_1) \xi_1 (\qb_1 - \qb_2) - F'''(\phib_1)(\xi_1 - \xi_2) \qb_2 - (F'''(\phib_1) - F'''(\phib_2)) \xi_2 \qb_2 \quad && \hbox{in $Q_T$,} \label{eq:P2} \\
			& Q = - \Delta P \quad && \hbox{in $Q_T$,} \label{eq:Q2} \\
			& \partial_{\n} P = \partial_{\n} Q = 0 \quad && \hbox{on $\Sigma_T$,} \label{eq:bcadjlin2} \\
			& P(T) = 0 \quad && \hbox{in $\Omega$,} \label{eq:fcadjlin2}
		\end{alignat}
		Observe that, also in this case, problem \eqref{eq:P2}--\eqref{eq:fcadjlin2} has the same formal structure as the general adjoint problem \eqref{eq:pgen}--\eqref{eq:fcadjgen}, if we set
		\begin{align*}
			g_3 & := \alpha_1 \chiod (\xi_1 - \xi_2) - h (\pb_1 - \pb_2) - (\lambdab_1 - \lambdab_2) P_2 - (F''(\phib_1) - F''(\phib_2)) Q_2 \\
			& \quad - F'''(\phib_1) \xi_1 (\qb_1 - \qb_2) - F'''(\phib_1)(\xi_1 - \xi_2) \qb_2 - (F'''(\phib_1) - F'''(\phib_2)) \xi_2 \qb_2.
		\end{align*} 
		Hence, taking Remark \ref{rmk:adjgen} into account, we immediately deduce that 
		\[
			\norm{(P,Q)}^2_{\YY} \le C \norm{g_3}^2_{\LT 2 H}.
		\]
		Then, employing H\"older's inequality, the Sobolev embeddings $V \hookrightarrow \Lx4$ and $W \hookrightarrow \Lx \infty$, Remark \ref{rmk:Fderivatives}, the uniform bounds for $(\pb_i, \qb_i)$ and $(P_i, Q_i)$ given by Propositions \ref{prop:adjoint} and \ref{prop:adjlinear}, the continuous dependence estimates given by Theorem \ref{thm:contdep} and Proposition \ref{prop:lipcont_costate} and the Lipschitz continuity of $\D \Scal$ by Theorem \ref{thm:frechet}, we find
		{\allowdisplaybreaks
		\begin{align*}
			\norm{g_3}^2_{\LT 2 H}
			& \le C \int_0^T \alpha_1 \norm{\chiod}_{\Lx\infty} \norm{\xi_1 - \xi_2}^2_H \, \de t 
			+ C \int_0^T \norm{\pb_1 - \pb_2}^2_{\Lx\infty} \norm{h}^2_{H} \, \de t \\
			& \quad + C \int_0^T \norm{\lambdab_1 - \lambdab_2}^2_H \norm{P_2}^2_{\Lx\infty} \, \de t 
			+ C \int_0^T \norm{F''(\phib_1) - F''(\phib_2)}^2_{\Lx4} \norm{Q_2}^2_{\Lx4} \, \de t \\
			& \quad + C \int_0^T \norm{F'''(\phi_1)}^2_{\Lx\infty} \norm{\xi_1}^2_{\Lx4} \norm{\qb_1 - \qb_2}^2_{\Lx4} \, \de t \\
			& \quad + C \int_0^T \norm{F'''(\phib_1)}^2_{\Lx\infty} \norm{\xi_1 - \xi_2}^2_{\Lx4} \norm{\qb_2}^2_{\Lx4} \\
			& \quad + C \int_0^T \norm{F'''(\phib_1) - F'''(\phib_2)}^2_{\Lx4} \norm{\xi_2}^2_{\Lx\infty} \norm{\qb_2}^2_{\Lx4} \\
			& \le C \norm{\xi_1 - \xi_2}^2_{\LT 2 H}
			+ C \int_0^T \norm{\pb_1 - \pb_2}^2_{W} \, \de t 
			+ C \norm{\lambdab_1 - \lambdab_2}^2_H \int_0^T \norm{P_2}^2_W \, \de t \\
			& \quad + C \norm{\phib_1 - \phib_2}^2_{\LT \infty V} \int_0^T \norm{Q_2}^2_V \, \de t
			+ C \norm{\xi_1}^2_{\LT \infty V} \int_0^T \norm{\qb_1 - \qb_2}^2_V \, \de t \\
			& \quad + C \norm{\xi_1 - \xi_2}^2_{\LT \infty V} \int_0^T \norm{\qb_2}^2_V \, \de t \\
			& \quad + C \norm{\xi_2}^2_{\LT \infty W} \norm{\phib_1 - \phib_2}^2_{\LT \infty V} \int_0^T \norm{\qb_2}^2_V \, \de t \\
			& \le C \norm{\lambdab_1 - \lambdab_2}^2_H,
 		\end{align*}
		}%
		so that 
        \eqref{eq:adjfre:lipaim} holds
		and the proof is complete.
	\end{proof}

	\subsection{Second-order optimality conditions}

	By Theorems \ref{thm:frechet} and \ref{thm:adjfrechet}, the control-to-state operator $\Scal$ and the control-to-costate operator $\Tcal$ are now both continuously Fr\'echet differentiable in the respective spaces. 
	Then, the reduced cost functional $\Jred: \mathcal{U}_R \to \R$, defined as $\Jred(\lambda_0) = \Jcal (\Scal_1(\lambda_0), \lambda_0)$, where $\Scal_1$ selects the first component of $\Scal$, is twice continuously Fr\'echet differentiable by the chain rule.
	In particular, we can explicitly compute the second order derivatives of the $\mathcal{C}^2$ functional $\Jred$ in terms of the systems introduced above.
	Indeed, on account of \ref{ass:c3}, we have
	\[
		\Jred(\lambda_0) = \frac{\alpha_1}{2} \int_0^T \int_{\Omega \setminus D} \abs{\phi - f}^2 \, \de x \, \de t + \frac{\beta}{2} \int_{\Omega \setminus D} \frac{1}{\lambda_0^2} \, \de x, 
	\] 
	where $\phi = \Scal_1(\lambda_0)$.
	For a fixed $\lambdazb \in \Uad$, Theorem \ref{thm:optcond_first} shows that 
	\[
		\D \Jred(\lambdazb)[h_0] = \int_0^T \int_{\Omega \setminus D} \alpha_1 (\phib - f) \xi \, \de x \, \de t - \int_{\Omega \setminus D} \frac{\beta}{\lambdazb^3} h_0 \, \de x, 
	\]
	where $\phib = \Scal_1(\lambdazb)$ and $\xi = \D \Scal_1(\lambdazb)[h_0]$. 
	By exploiting the adjoint problem \eqref{eq:p}--\eqref{eq:fcadj}, we also know that we can rewrite the above derivative in the form (see Theorem \ref{thm:optcond_first}) 
	\begin{equation}
	\label{eq:Jprime}
		\D \Jred(\lambdazb)[h_0] = \int_0^T \int_{\Omega \setminus D} \pb (f - \phib) h_0 \, \de x \, \de t - \int_{\Omega \setminus D} \frac{\beta}{\lambdazb^3} h_0 \, \de x,
	\end{equation}
	where $\pb = \Tcal_1(\lambdazb)$, $\phib = \Scal_1(\lambdazb)$. 
	Then, thanks to the chain rule, we can compute the second derivative of $\Jred$ at $\lambdazb$ in the directions $h_0, k_0 \in L^\infty(\Omega \setminus D)$ as follows:
	\begin{align*}
		\D^2 \Jred(\lambdazb)[h_0, k_0] & = \int_0^T \int_{\Omega \setminus D} \D T_1(\lambdazb)[k_0] (f - \phib) h_0 \, \de x \, \de t \\
		& \quad + \int_0^T \int_{\Omega \setminus D} \pb \, \D \Scal_1(\lambdazb)[k_0] h_0 \, \de x \, \de t 
		+ \int_{\Omega \setminus D} \frac{3 \beta}{\lambdazb^4} h_0 k_0 \, \de x,
	\end{align*}
	namely,
	\begin{equation}
		\label{eq:Jsecond}
		\D^2 \Jred(\lambdazb)[h_0, k_0] = \int_0^T \int_{\Omega \setminus D} P \, (f - \phib) h_0 \, \de x \, \de t 
		+ \int_0^T \int_{\Omega \setminus D} \pb \, \xi h_0 \, \de x \, \de t 
		+ \int_{\Omega \setminus D} \frac{3 \beta}{\lambdazb^4} h_0 k_0 \, \de x,
	\end{equation}
	where $P := \D \Tcal_1(\lambdazb)[k_0]$ is the solution to the linearised adjoint problem \eqref{eq:P}--\eqref{eq:fcadjlin} with $k_0$, $\phib := \Scal_1(\lambdazb)$ is the solution to \eqref{eq:phi}--\eqref{eq:ic} with $\lambdazb$, $\pb := \Tcal_1(\lambdazb)$ is the solution to the adjoint problem \eqref{eq:p}--\eqref{eq:fcadj} with $\lambdazb$, and $\xi := \D \Scal_1(\lambdazb)[k_0]$ solves the linearised problem \eqref{eq:xi}--\eqref{eq:iclin} with $k_0$.
	Moreover, we observe that, owing to Remarks \ref{rmk:Sext} and \ref{rmk:Text}, the expressions above for the derivatives of $\Jred$ make sense even for $h_0, k_0 \in L^2(\Omega \setminus D)$. 
	Then, for any $\lambdazb \in \mathcal{U}_R$, $\D^2 \Jred(\lambdazb)$ can be extended to a continuous bilinear form from $L^2(\Omega \setminus D)$ to $\R$.
	
	We are now ready to state and prove the second-order sufficient optimality conditions for our optimal control problem (CP). 
	To do this, we follow the approach developed in \cite{CRT2008,CT2012} (cf. also \cite{CTU2000, troltzsch} for a slightly different one).
	Both were later applied also to Cahn--Hilliard-type systems (see \cite{ACGW2024, EK2020, CSS2021_secondorder, ST2024, CFGS2015, CS2024}).
	The idea of sufficient second-order conditions is to show that if $\D^2 \Jred(\lambdazb)$ is positive definite in an optimal point $\lambdazb \in \Uad$, namely
	\[
		\D^2 \Jred(\lambdazb)[h_0, h_0] > 0 \quad \hbox{for any $h_0 \in L^2(\Omega \setminus D) \setminus \{0\}$,}
	\]
	then such a $\lambdazb$ is a strict local minimiser of $\Jred$ on $\Uad$.
	However, imposing the positive definiteness of $\D^2\Jred$ in all directions $h_0$ is generally too restrictive. 
	Indeed, it suffices to require such a condition only in a certain class of critical directions (cf. \cite[Section 4.10]{troltzsch} and \cite{CRT2008, CT2012} for more details on this topic).
    Essentially, due to some additional information on the structure of the optimal controls that can be extracted from the first-order optimality condition and from the fact that $\Uad$ has a box form (cf. \cite{troltzsch, CRT2008, CT2012}), one can restrict to impose the positive definiteness only in some particular directions.
	Hence, we need to introduce some  notions which are summarized in the following

	\begin{definition}
		Given $\lambdazb \in \Uad$, we call
		\[
			A_0(\lambdazb) := \left\{ x \in \Omega \setminus D: \, \abs*{ \int_0^T \pb(x,t)(\phib(x,t) - f) \, \de t - \frac{\beta}{\lambdazb(x)^3}} > 0 \right\}
		\]
		the \emph{set of strongly active constraints}, where $\phib$ and $\pb$ are respectively the forward and adjoint states associated to $\lambdazb$. 
		Then, we define the \emph{cone of critical directions} as the set 
		\[
			\mathcal{C}_0(\lambdazb) := \{ h_0 \in \mathcal{U}_R \mid \hbox{$h_0$ satisfies \eqref{eq:criticalcone}} \},
		\]
		where \eqref{eq:criticalcone} is given by
		\begin{equation}
			\label{eq:criticalcone}
			h_0(x) \, 
			\begin{cases}
				\ge 0, \quad \hbox{if $x \notin A_0(\lambdazb)$ and $\lambdazb(x) = \lambda_{\text{min}}(x)$,} \\
				\le 0, \quad \hbox{if $x \notin A_0(\lambdazb)$ and $\lambdazb(x) = \lambda_{\text{max}}(x)$,} \\
				= 0, \quad \hbox{if $x \in A_0(\lambdazb)$,}
			\end{cases}
		\end{equation}
		for almost any $x \in \Omega \setminus D$.
	\end{definition}

	Another thing to keep in mind when dealing with the second-order sufficient conditions is the so-called \emph{two-norm discrepancy}. This arises because $\Jred$ is of class $\mathcal{C}^2$ in $L^\infty(\Omega \setminus D)$, but its second derivative should be positive definite with respect to the $L^2(\Omega \setminus D)$-norm.
	However, if $\beta > 0$, this issue can be overcome and the whole analysis can be extended to $\Lxod 2$. 
	For further details on the matter, we refer the reader to \cite{CT2012, CT2015}. 
	Here we just state and prove the main result of this section.

	\begin{theorem}
		\label{thm:optcond_second}
		Let \ref{ass:omega}--\ref{ass:f}, \ref{ass:initial2} and \ref{ass:c1}--\ref{ass:c3} hold and let $\beta > 0$.
		Suppose that $\lambdazb \in \Uad$ is an optimal control satisfying  \eqref{eq:varineq}, and assume that
		\begin{equation}
			\label{eq:posdef}
			\D^2 \Jred(\lambdazb)[h_0, h_0] > 0 \quad \hbox{for any $h_0 \in \mathcal{C}_0(\lambdazb) \setminus \{0\}$.}
		\end{equation} 
		Then, there exist constants $\eps, \delta > 0$ such that 
		\begin{equation}
			\label{eq:localminim}
			\Jred(\lambda_0) \ge \Jred(\lambdazb) + \delta \norm{\lambda_0 - \lambdazb}^2_{L^2(\Omega \setminus D)}
			\quad 
			\hbox{$\forall \lambda_0 \in \Uad$ s.t. $\norm{\lambda_0 - \lambdazb}_{L^2(\Omega \setminus D)} < \eps$.}
		\end{equation}
		Consequently, $\lambdazb$ is a strict local minimiser of $\Jred$ in $\Uad$.
	\end{theorem}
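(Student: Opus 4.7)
The plan is to argue by contradiction, following the classical Casas--Tr\"oltzsch second-order framework, adapted to the Cahn--Hilliard structure set up in the previous sections. Suppose \eqref{eq:localminim} fails. Then for every $n \in \N$ there exists $\lambda_0^n \in \Uad$ with $\rho_n := \norm{\lambda_0^n - \lambdazb}_{L^2(\Omega \setminus D)} \to 0$ and
\[
    \Jred(\lambda_0^n) < \Jred(\lambdazb) + \frac{\rho_n^2}{n}.
\]
Set $h_0^n := (\lambda_0^n - \lambdazb)/\rho_n$, so that $\norm{h_0^n}_{L^2(\Omega \setminus D)} = 1$; up to a subsequence, $h_0^n \weak h_0$ weakly in $L^2(\Omega \setminus D)$.

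The first task would be to show that $h_0 \in \mathcal{C}_0(\lambdazb)$. The sign conditions in \eqref{eq:criticalcone} are inherited by $h_0$ because the pointwise cones $\{h \ge 0\text{ a.e.~on } \{\lambdazb = \lambda_{\text{min}}\}\}$ and $\{h \le 0\text{ a.e.~on } \{\lambdazb = \lambda_{\text{max}}\}\}$ are convex and strongly closed, hence weakly closed, in $L^2(\Omega \setminus D)$. To obtain $h_0 = 0$ on $A_0(\lambdazb)$, one combines a second-order Taylor expansion of $\Jred$ at $\lambdazb$ with the contradiction hypothesis, the non-negativity $\D\Jred(\lambdazb)[\lambda_0^n - \lambdazb] \ge 0$ coming from the first-order condition, and the boundedness of $\D^2\Jred$ on $\mathcal{U}_R$; this forces $\D\Jred(\lambdazb)[h_0^n] \to 0$. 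Using the explicit form \eqref{eq:Jprime}, this reads $\int_{\Omega \setminus D} \Psi\, h_0^n \,\de x \to 0$, where $\Psi(x) := \int_0^T \pb(f-\phib)\,\de t - \beta/\lambdazb^3$ is the first-variation density. Since the pointwise first-order condition pins $\lambdazb$ to the extremum whose sign matches that of $\Psi$, one has $\Psi\, h_0^n \ge 0$ a.e.; this non-negativity passes to the weak limit, and combined with $\int_{\Omega \setminus D} \Psi\, h_0\,\de x = 0$ it forces $\Psi\, h_0 = 0$ a.e., hence $h_0 = 0$ on $A_0(\lambdazb) = \{\abs{\Psi} > 0\}$.

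The second task is to pass to the limit in the quadratic inequality $\D^2\Jred(\hat\lambda_0^n)[h_0^n,h_0^n] < 2/n$ obtained by subtracting the non-negative first-order term in Taylor's expansion, where $\hat\lambda_0^n$ lies on the segment from $\lambdazb$ to $\lambda_0^n$. The Lipschitz continuity of $\Scal$ and $\Tcal$ (Theorem \ref{thm:contdep} and Proposition \ref{prop:lipcont_costate}) and of $\D\Scal, \D\Tcal$ (Theorems \ref{thm:frechet} and \ref{thm:adjfrechet}) permits one to replace $\hat\lambda_0^n$ by $\lambdazb$ in the bilinear form, up to an $O(\rho_n)$ residual that is uniform for $h_0^n$ bounded in $L^2(\Omega \setminus D)$. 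The three terms of \eqref{eq:Jsecond} are then examined at $(\lambdazb, h_0^n, h_0^n)$: the linearised state $\xi^n = \D\Scal_1(\lambdazb)[h_0^n]$ and costate $P^n = \D\Tcal_1(\lambdazb)[h_0^n]$ are bounded in $\WW$ and $\YY$ respectively via the $L^2$-extensions from Remarks \ref{rmk:Sext} and \ref{rmk:Text}, and converge strongly to $\xi, P$ in $\LT 2 H$ by Aubin--Lions-type compact embeddings, so the first two terms pass to the limit in the natural way. The third term $\int_{\Omega \setminus D} 3\beta\lambdazb^{-4} (h_0^n)^2 \,\de x$ is a convex quadratic functional, hence weakly lower semicontinuous, and is bounded below by $3\beta \lambda_{\text{max}}^{-4} > 0$ uniformly in $n$.

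The contradiction is then closed by splitting into two cases. If $h_0 \neq 0$, membership $h_0 \in \mathcal{C}_0(\lambdazb)$ together with the positivity hypothesis \eqref{eq:posdef} and lower semicontinuity give
\[
    \liminf_n \D^2\Jred(\lambdazb)[h_0^n, h_0^n] \ge \D^2\Jred(\lambdazb)[h_0, h_0] > 0,
\]
contradicting $\D^2\Jred(\hat\lambda_0^n)[h_0^n, h_0^n] < 2/n + o(1)$. If instead $h_0 = 0$, the first two terms of \eqref{eq:Jsecond} vanish in the limit by the strong convergences $\xi^n \to 0$ and $P^n \to 0$, but the third still contributes at least $3\beta \lambda_{\text{max}}^{-4}$, yielding the same contradiction. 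The main obstacle is the two-norm discrepancy hidden in the Lipschitz replacement above: the perturbations are small only in $L^2(\Omega \setminus D)$ while the Fr\'echet derivatives are naturally defined on $L^\infty(\Omega \setminus D)$, and it is precisely the $L^2$-extensions of $\D\Scal, \D\Tcal$ together with the strict lower bound provided by the assumption $\beta > 0$ that allow both cases to be closed.
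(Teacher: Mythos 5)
Your proposal is correct and follows essentially the same route as the paper: a contradiction argument in the Casas--Tr\"oltzsch style with a normalised sequence $h_0^n$, identification of the weak limit as a critical direction, a second-order Taylor expansion controlled by the Lipschitz continuity of $\D\Scal$ and $\D\Tcal$, and the strictly positive $\beta$-term closing the degenerate case. The only (cosmetic) differences are that you organise the endgame as a case split on $h_0=0$ versus $h_0\neq 0$ instead of first proving $\D^2\Jred(\lambdazb)[\hb_0,\hb_0]\le 0$, and that you correctly invoke only weak lower semicontinuity for the quadratic term $\int 3\beta\lambdazb^{-4}(h_0^n)^2$, where the paper somewhat loosely asserts convergence.
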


	\begin{proof}
		We argue by contradiction in the same spirit of \cite[Theorem 4.1]{CRT2008}.
		Suppose that $\lambdazb \in \Uad$ is an optimal control satisfying \eqref{eq:varineq} and \eqref{eq:posdef}, but not \eqref{eq:localminim}. 
		Then, there must exist a sequence of controls $\{\lambda_0^n \}_{n \in \N} \subset \Uad$ such that $\lambda_0^n \neq \lambdazb$ for any $n \in \N$, $\lambda_0^n \to \lambdazb$ strongly in $\Lxod 2$ and 
		\begin{equation}
			\label{eq:oc2:contradiction}
			\Jred(\lambda_0^n) < \Jred(\lambdazb) + \frac{1}{n} \norm{\lambda_0^n - \lambdazb}^2_{\Lxod 2} 
			\quad \hbox{for any $n \in \N$.}
		\end{equation}
		Let us set, for any $n \in \N$, 
		\begin{equation}
			\label{eq:oc2:h0n}
			h_0^n := \frac{\lambda_0^n - \lambdazb}{\norm{\lambda_0^n - \lambdazb}_{\Lxod 2}} \in \Lxod\infty.
		\end{equation}  
		Since $\norm{h_0^n}_{\Lxod 2} = 1$ for any $n \in \N$, we can extract a (not relabelled) subsequence such that 
		\[
			h_0^n \weak \hb_0 \quad \hbox{weakly in $\Lxod 2$, for some $\hb_0 \in \Lxod 2$.}
		\]
		
		\noindent
		\textsc{Step 1.} We first want to show that $\D \Jred(\lambdazb)[\hb_0] = 0$. 
		For any $n \in \N$, we call $\phi_n = \Scal_1(\lambda_0^n)$ and $p_n = \Tcal_1(\lambda_0^n)$ the forward and adjoint states corresponding to $\lambda_0^n$, and we set $\phib := \Scal_1(\lambdazb)$ and $\pb := \Tcal_1(\lambdazb)$. 
		Moreover, we observe that by Theorem \ref{thm:contdep} and Proposition \ref{prop:lipcont_costate} the operators $\Scal$ and $\Tcal$ are both continuous from $\Lxod 2$ to $\YY$. 
		Thus, if $\lambda_0^n \to \lambdazb$ strongly in $\Lxod 2$, then we can immediately conclude that 
		\[
			\phi_n \to \phib \quad \text{and} \quad p_n \to \pb \quad \hbox{strongly in } \HT 1 H \cap \LT \infty W \cap \LT 2 {\Hx4}. 
		\]
		Then, by looking at \eqref{eq:Jprime}, since $h_0^n \weak \hb_0$ weakly in $\Lxod 2$, it is a standard matter to show that 
		\begin{align*}
			& \int_0^T \int_{\Omega \setminus D} p_n (f - \phi_n) h_0^n \, \de x \, \de t \to \int_0^T \int_{\Omega \setminus D} \pb (f - \phib) \hb_0 \, \de x \, \de t, \\
			& \int_{\Omega \setminus D} \frac{\beta}{(\lambda_0^n)^3} h_0^n \, \de x \to \int_{\Omega \setminus D} \frac{\beta}{\lambdazb^3} \hb_0 \, \de x. 
		\end{align*}
		In particular, observe that in the second limit we used that $1/(\lambda_0^n)^3 \to 1/\lambdazb^3$ strongly in $\Lxod 2$, which follows from the strong convergence of $\lambda_0^n \to \lambdazb$ in $\Lxod 2$ and the fact that $\lambda_0^n \in \Uad \subset \Lxod\infty$ uniformly.
		The limits above imply that 
		\begin{equation}
			\label{eq:oc2:limit1}
			\lim_{n \to + \infty} \D \Jcal(\lambda_0^n)[h_0^n] = \D \Jcal(\lambdazb)[\hb_0].
		\end{equation}
		Next, by Taylor expansion, we can write 
		\[
			\Jred(\lambda_0^n) = \Jred(\lambdazb) + \int_0^1 \D \Jred(z \lambda_0^n + (1 - z) \lambdazb)[\lambda_0^n - \lambdazb] \, \de z,
		\]
		which, by comparison with our assumption \eqref{eq:oc2:contradiction}, gives
		\[
			\int_0^1 \D \Jred(z \lambda_0^n + (1 - z) \lambdazb)[\lambda_0^n - \lambdazb] \, \de z < \frac{1}{n} \norm{\lambda_0^n - \lambdazb}^2_{\Lxod 2} \to 0, \quad \hbox{as $n \to +\infty$.}
		\]
		Thus, by combining this fact with \eqref{eq:oc2:h0n}--\eqref{eq:oc2:limit1} and the continuity of  $\D \Jred$, we infer that $\D \Jred(\lambdazb)[\hb] \le 0$. 
		On the other hand, since $\lambda_0^n \in \Uad$ and $h_0^n$ is defined by \eqref{eq:oc2:h0n}, it follows from the optimality condition \eqref{eq:varineq} that $\D \Jred(\lambdazb)[h_0^n] \ge 0$. 
		Then, by continuity, we deduce that $\D \Jred(\lambdazb)[\hb] \ge 0$. Hence, we get
		\begin{equation}
			\label{eq:oc2:step1}
			\D \Jred(\lambdazb)[\hb_0] = 0.
		\end{equation}

		\noindent
		\textsc{Step 2.} Here we show that $\hb_0 \in \mathcal{C}_0(\lambdazb)$. 
		To do this, we have to verify that $\hb_0$ satisfies the sign conditions \eqref{eq:criticalcone}.
		First of all, we observe that the set of functions in $\Lxod 2$ that are non-negative if $\lambdazb(x) = \lambda_{\text{min}}(x)$ and non-positive if $\lambdazb(x) = \lambda_{\text{max}}(x)$, almost everywhere, is closed and convex. 
		Thus, it is also weakly closed in $\Lxod 2$.
		Moreover, it is clear that, since $\lambda_0^n \in \Uad$, $\lambda_0^n - \lambdazb$ belongs to this set for any $n \in \N$.
		Then, by its definition \eqref{eq:oc2:h0n}, $h_0^n$ belongs to the same set for any $n \in \N$ and $\hb_0$ does as well, due to the weak closedness.
		At this point, by \textsc{Step 1}, we know that 
		\[
			\D \Jred(\lambdazb)[\hb_0] = \int_{\Omega \setminus D} \left( \int_0^T \pb(x,t) (\phib(x,t) - f) \, \de t - \frac{\beta}{\lambdazb(x)^3} \right) \hb_0(x) \, \de x = 0, 
		\]
		which immediately implies that $\hb_0(x) = 0$ if $x \in A_0(\lambdazb)$. 
		Consequently, $\hb_0$ satisfies the sign conditions \eqref{eq:criticalcone}, so it belongs to $\mathcal{C}_0(\lambdazb)$.

		\noindent
		\textsc{Step 3.} Our next aim is to show that $\hb_0 \equiv 0$. 
		To do this, it is enough to prove that 
		\begin{equation}
			\label{eq:oc2:step3}
			\D^2 \Jred(\lambdazb)[\hb_0, \hb_0] \le 0. 
		\end{equation}
		Indeed, if \eqref{eq:oc2:step3} holds, since $\hb_0 \in \mathcal{C}_0(\lambdazb)$ by \textsc{Step 2} and $\lambdazb$ satisfies \eqref{eq:posdef}, the only possibility is that $\hb_0 \equiv 0$. 
		Hence, we now show \eqref{eq:oc2:step3}.
		First, for any $n \in \N$ we set 
        \begin{equation*}
        \xi_n := \D \Scal(\lambdazb)[h_0^n], \quad P_n := \D \Tcal(\lambdazb)[h_0^n], \quad \bar{\xi} := \D \Scal(\lambdazb)[\hb_0], \quad \bar{P} := \D \Tcal(\lambdazb)[\hb_0].
        \end{equation*}
		Next, by the weak convergence $h_0^n \weak \hb_0$ and the fact that, for fixed $\lambdazb$, $\D\Scal(\lambdazb)$ and $\D\Tcal(\lambdazb)$ are linear bounded operators, we infer that 
		\[
			\xi_n \to \bar{\xi} \quad \text{and} \quad P_n \to \bar{P} \quad \hbox{weakly in $\HT 1 H \cap \LT 2 {\Hx4}$}.
		\]
		Then, by well-known compact embeddings (cf. \cite[Section 8, Corollary 4]{S1986}), it follows that, up to a subsequence,  
		\[
			\xi_n \to \bar{\xi} \quad \text{and} \quad P_n \to \bar{P} \quad \hbox{strongly in $C^0(\bar{Q_T})$.}
		\]
		Thus, by exploiting the weak and strong convergences highlighted above as well as the explicit expression \eqref{eq:Jsecond} of the second derivative of $\Jred$, it is easy to see that 
		\begin{align*}
			& \int_0^T \int_{\Omega \setminus D} P_n (f - \phib) h_0^n \, \de x \, \de t \to \int_0^T \int_{\Omega \setminus D} \bar{P} (f - \phib) \hb_0 \, \de x \, \de t, \\
			& \int_0^T \int_{\Omega \setminus D} \pb \xi_n h_0^n \, \de x \, \de t \to \int_0^T \int_{\Omega \setminus D} \bar{p} \bar{\xi} \, \hb_0 \, \de x \, \de t, \\
			& \int_{\Omega \setminus D} \frac{3 \beta}{\lambdazb^4} h_0^n \, \de x \to \int_{\Omega \setminus D} \frac{3 \beta}{\lambdazb^4} \hb_0 \, \de x. 
		\end{align*}
		Consequently, we conclude that 
		\begin{equation}
			\label{eq:oc2:limit2}
			\lim_{n \to + \infty} \D^2 \Jred(\lambdazb)[h_0^n, h_0^n] = \D^2 \Jred(\lambdazb)[\hb_0, \hb_0].
		\end{equation}
		We now use a second-order Taylor expansion, which, on account of the linearity of the derivatives and recalling that, by \eqref{eq:oc2:h0n}, $\lambda_0^n - \lambdazb = \norm{\lambda_0^n - \lambdazb}_{\Lxod 2} h_0^n$, yields
		\begin{align*}
			\Jred(\lambda_0^n) & = \Jred(\lambdazb) + \D \Jred(\lambdazb)(\lambda_0^n - \lambdazb) \\
			& \quad + \mezzo \norm{\lambda_0^n - \lambdazb}^2_{\Lxod 2} \int_0^1 \D^2 \Jred(z \lambda_0^n + (1 - z) \lambdazb)[h_0^n, h_0^n] \, \de z.
		\end{align*}
		Therefore, since, by the optimality condition \eqref{eq:varineq}, $\D \Jred(\lambdazb)(\lambda_0^n - \lambdazb) \ge 0$, by comparing the above expression with  \eqref{eq:oc2:contradiction}, we deduce that
		\begin{equation}
			\label{eq:oc2:bound2}
			\int_0^1 \D^2 \Jred(z \lambda_0^n + (1 - z) \lambdazb)[h_0^n, h_0^n] \, \de z < \frac{2}{n}.
		\end{equation}
		Moreover, the strong convergence $\lambda_0^n \to \lambdazb$ in $\Lxod 2$ combined with the fact that $\Jred$ is of class $\mathcal{C}^2$ (cf. Theorems \ref{thm:frechet} and \ref{thm:adjfrechet}), entails that
		\[
			\abs*{\int_0^1  \D^2 \Jred(z \lambda_0^n + (1 - z) \lambdazb)[h_0^n, h_0^n] \, \de z - \D^2 \Jred(\lambdazb)[h_0^n, h_0^n]} \to 0, \quad \text{as } n \to + \infty.
		\]
		This limit and \eqref{eq:oc2:limit2}--\eqref{eq:oc2:bound2} imply that
		\begin{align*}
			\D^2 \Jred(\lambdazb)[\hb_0, \hb_0] 
			& = \lim_{n \to + \infty} \D^2 \Jred(\lambdazb)[h_0^n, h_0^n] \\
			& \le \limsup_{n \to + \infty} \int_0^1 \D^2 \Jred(z \lambda_0^n + (1 - z) \lambdab)[h_0^n, h_0^n] \, \de z \\
			& \quad + \lim_{n \to +\infty} \left( \D^2 \Jred(\lambdazb)[h_0^n, h_0^n] - \int_0^1 \D^2 \Jred(z \lambda_0^n + (1 - z) \lambdab)[h_0^n, h_0^n] \, \de z \right) \\
			& \le 0.
		\end{align*}
		Thus \eqref{eq:oc2:step3} holds so that $\hb_0 \equiv 0$.
		
		\noindent
		\textsc{Step 4.}
		By \textsc{Step 3} we now know that $h_0^n \weak \hb_0 \equiv 0$ weakly in $\Lxod 2$. 
		Then, by the linearity of $\D \Scal(\lambdazb)$ and $\D \Tcal(\lambdazb)$, we deduce that $\bar{\xi} = \D \Scal(\lambdazb)[\hb_0] = 0$ and $\bar{P} = \D \Tcal(\lambdazb)[\hb_0] = 0$. 
		Also, by the strong convergences highlighted above, we have that $\xi_n \to 0$ and $P_n \to 0$ strongly in $C^0(\bar{Q_T})$. 
		Finally, by using the facts that $\beta > 0$, $\lambdazb \in \Uad$ and $\norm{h_0^n}_{\Lxod 2} = 1$ for any $n \in \N$, together with \eqref{eq:Jsecond} and \eqref{eq:oc2:step3}, we conclude that
		\begin{align*}
			0 & < \frac{3 \beta}{\norm{\lambdazb^4}_{\Lxod\infty}} 
			= \frac{3 \beta}{\norm{\lambdazb^4}_{\Lxod\infty}} \lim_{n \to +\infty} \norm{h_0^n}^2_{\Lx2} \\
			& \le \lim_{n \to + \infty} \int_{\Omega \setminus D} \frac{3 \beta}{\lambdazb^4} (h_0^n)^2 \, \de x \\
			& = \lim_{n \to + \infty} \D^2 \Jred(\lambdazb)[h_0^n, h_0^n] 
			- \lim_{n \to + \infty} \int_0^T \int_{\Omega \setminus D} P_n \, (f - \phib) h_0^n \, \de x \, \de t \\
			& \quad - \lim_{n \to + \infty} \int_0^T \int_{\Omega \setminus D} \pb\,  \xi_n \, h_0^n \, \de x \, \de t \\
			& = \D^2 \Jred(\lambdazb)[\hb_0, \hb_0] \le 0, 
		\end{align*}
		which leads to a contradiction. 
		The proof is now complete.
	\end{proof}
\section{Conclusions} 
In this paper, we introduced and studied an optimal control approach to the image inpainting model proposed by Bertozzi, Esedoglu, and Gillette. Our primary focus was on determining a suitable fidelity coefficient $\lambda$ to achieve faithful reconstruction of a damaged image $f$. By moving from the original polynomial double-well potential to a physically relevant logarithmic-type potential, we ensured that solutions to our evolutionary system remained strictly within the interval $[-1,1]$, thus preserving the range of the inpainted image at all times.
After establishing the well-posedness of the state system, we formulated an optimal control problem whose goal was twofold: maintain fidelity to the undamaged image $f$ in $\Omega\backslash D$ and promote sufficiently large values of $\lambda$ where fine details of the image need to be restored. We proved the existence of an optimal control $\lambda$ under the proposed constraints and then derived first-order necessary optimality conditions using an adjoint system. Subsequently, we addressed second-order sufficient conditions, which provide deeper insight into the local optimality of solutions.

Overall, our results clarify the role of the fidelity parameter in the context of Cahn--Hilliard-type inpainting and highlight how optimal control strategies can be designed to tune the reconstruction process. These theoretical findings can be of direct use in numerical algorithms, where spatially varying fidelity weights are beneficial in capturing local image features and enhancing the visual quality of the restored regions.

Possible directions for future work include:
to extend the analysis to grayscale or colour images,
to develop, based on our approach, a reconstruction procedure, to incorporate additional regularization or anisotropic effects in the fidelity term to better handle images with sharp edges or specific textures and, finally, to investigate dynamic control strategies in both space and time, potentially allowing the fidelity parameter to adapt continuously as the inpainting evolves.
We believe that these extensions, alongside the theoretical framework established here, will further promote the applicability and robustness of PDE-based image inpainting models in real-world scenarios.

\appendix

\section{Appendix}


    To provide further motivation on why it is important to choose the values of $\lambda$ large enough, we complement the results obtained in \cite{BEG2007} with an additional one describing the behaviour of the solution to \eqref{eq:phi0}--\eqref{eq:ic0} in the undamaged domain $\Omega \setminus D$.
    We recall that, if $f \in \Cx 2$ and $F$ is a regular polynomial double-well potential, in \cite{BEG2007} it was shown that, if $\lambda_0 \to +\infty$, the stationary solutions to \eqref{eq:phi0}--\eqref{eq:ic0} converge in $D$ to the solutions of the following boundary value problem:
    \begin{alignat*}{2}
		& - \Delta \left( - \eps \Delta \phi + \frac{1}{\eps} F'(\phi) \right) = 0 \qquad && \hbox{in $D$}, \\
		& \phi = f && \hbox{on $\partial D$}, \\
		& \partial_{\n} \phi = \partial_{\n} f && \hbox{on $\partial D$}. 
	\end{alignat*}
    At the same time, the stationary solution exactly matches $f$ in the undamaged domain $\Omega \setminus D$.
    This means that, as $\lambda_0$ grows larger, one can expect a better inpainting result.
    In particular, we stress that here $f$ is supposed to be regular: for instance, it can be seen as an approximation of the original image $f \in \Lx \infty$ with a regularising convolution kernel.

    With the following Proposition \ref{prop:asympt} we show that also the solution to the evolutionary inpainting model \eqref{eq:phi0}--\eqref{eq:ic0} stays close to the original image in $\Omega \setminus D$ as time grows larger, if the fidelity coefficient $\lambda_0$ is chosen large enough.
    Also in this case it is necessary to regularise the given image $f$ in some way.
    In particular, we assume to be in possession of an inpainted image $\phib$ in $D$ and, looking at our equation, we regularise $f$ as a solution $\tilde{f}$ to the following elliptic boundary value problem:
    \begin{alignat}{2}
		& - \Delta \left( - \eps \Delta \ftil + \frac{1}{\eps} F'(\ftil) \right) = 0 \qquad && \hbox{in $\Omega \setminus D$}, \label{eq:f} \\
		& \partial_{\n} \ftil = \partial_{\n} \left( - \eps \Delta \ftil + \frac{1}{\eps} F'(\ftil) \right) = 0 \quad && \hbox{on $\partial \Omega$}, \label{eq:bcf1} \\
		& \ftil = \phib, \quad \partial_{\n} \ftil = \partial_{\n} \phib && \hbox{on $\partial D$}. \label{eq:bcf2} 
	\end{alignat}
    We argue that this is a good phase-field approximation of the original image $f$ as it can be seen as a solution to the Euler--Lagrange equation associated with the Ginzburg--Landau energy.
    Hence, we show the following result.

    \begin{proposition}
        \label{prop:asympt}
        Let \ref{ass:omega}--\ref{ass:F} and \ref{ass:iniz} hold and let $\lambda_0 >0$ be a constant.
        Assume that $(\phi, \mu)$ is a weak solution to the following evolutionary problem in $\Omega \setminus D$:
        \begin{alignat}{2}
		& \partial_t \phitil - \Delta \mutil = \lambda_0 (\ftil - \phitil) \qquad && \hbox{in $(\Omega \setminus D) \times (0,+\infty)$}, \label{eq:phi:asymp} \\
		& \mutil = - \eps \Delta \phitil + \frac{1}{\eps} F'(\phitil) && \hbox{in $(\Omega \setminus D) \times (0,+\infty)$}, \label{eq:mu:asymp} \\
		& \partial_{\n} \phitil = \partial_{\n} \mutil = 0 && \hbox{on $\partial \Omega \times (0,+\infty)$}, \label{eq:bc:asymp1} \\
        & \phitil = \phib, \quad \partial_{\n} \phitil = \partial_{\n} \phib && \hbox{on $\partial D \times (0,+\infty)$}. \label{eq:bc:asymp2} \\
		& \phitil(0) = \phitil_0 && \hbox{in $\Omega \setminus D$}, \label{eq:ic:asymp}
	    \end{alignat}
        with regularities 
        \begin{align*}
            & \phitil \in \HT 1 {(H^1(\Omega \setminus D))^*} \cap \LT 2 {H^2(\Omega \setminus D)}, \\
            & \mutil \in \LT 2 {H^1(\Omega \setminus D)},
        \end{align*}
        for any positive time $T > 0$.
        Finally, let $\ftil \in \Hxod2$ be a weak solution to \eqref{eq:f}--\eqref{eq:bcf2} in the variational sense.

        Then, if $\lambda_0$ is chosen such that
        \[
            \lambda_0 > \frac{K}{\eps^3},
        \]
        where the constant $K > 0$ depends only on the parameters of the system, we have that 
        \[
            \norm{\phitil(t) - \ftil}_{(H^1(\Omega \setminus D))^*} \to 0 \quad \hbox{exponentially as $t \to + \infty$.}
        \]
    \end{proposition}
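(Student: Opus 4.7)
My plan is to analyse the difference $w := \phitil - \ftil$ in $L^2(\Omega \setminus D)$: once I establish exponential decay of $\norm{w(t)}_{\Lxod 2}$, the conclusion follows from the continuous embedding $\Lxod 2 \hookrightarrow (\Hxod 1)^*$. To set this up, I introduce $\eta := \mutil - (-\eps \Delta \ftil + \frac{1}{\eps} F'(\ftil))$ and subtract the stationary system \eqref{eq:f}--\eqref{eq:bcf2} from the evolutionary one \eqref{eq:phi:asymp}--\eqref{eq:ic:asymp}. Since $\ftil$ is time-independent and satisfies $-\Delta(-\eps \Delta \ftil + \frac{1}{\eps} F'(\ftil)) = 0$, the pair $(w, \eta)$ solves
\[
 \partial_t w - \Delta \eta = - \lambda_0 w, \qquad \eta = - \eps \Delta w + \frac{1}{\eps}(F'(\phitil) - F'(\ftil))
\]
in $(\Omega \setminus D) \times (0, + \infty)$. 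Combining \eqref{eq:bcf1}--\eqref{eq:bcf2} with \eqref{eq:bc:asymp1}--\eqref{eq:bc:asymp2} provides the crucial boundary information: $\partial_{\n} w = 0$ on the entire $\partial (\Omega \setminus D)$, $\partial_{\n} \eta = 0$ on $\partial \Omega$, and $w = 0$ on $\partial D$.

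Next, I would test the first equation with $w$ in $L^2(\Omega \setminus D)$ and integrate by parts twice: first using $\partial_{\n} \eta = 0$ on $\partial \Omega$ together with $w = 0$ on $\partial D$, and then using $\partial_{\n} w = 0$ on all of $\partial(\Omega \setminus D)$. This makes every boundary integral vanish and yields
\[
 \mezzo \ddt \norm{w}^2_{\Lxod 2} + \eps \norm{\Delta w}^2_{\Lxod 2} + \lambda_0 \norm{w}^2_{\Lxod 2} = \frac{1}{\eps} \int_{\Omega \setminus D} (F'(\phitil) - F'(\ftil)) \Delta w \, \de x.
\]
To control the right-hand side I would exploit the strict separation property for both $\phitil$ and $\ftil$, namely the existence of a time-independent $\delta > 0$ with $\norm{\phitil(t)}_{\Lxod \infty}, \norm{\ftil}_{\Lxod \infty} \le 1 - \delta$. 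Under \ref{ass:F} this yields a constant $L = L(F, \delta)$ such that $\abs{F'(\phitil) - F'(\ftil)} \le L \abs{w}$ a.e., and Cauchy--Schwarz and Young's inequality give
\[
 \frac{1}{\eps} \int_{\Omega \setminus D} (F'(\phitil) - F'(\ftil)) \Delta w \, \de x \le \frac{\eps}{2} \norm{\Delta w}^2_{\Lxod 2} + \frac{L^2}{2 \eps^3} \norm{w}^2_{\Lxod 2}.
\]

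Combining the two estimates produces the key differential inequality
\[
 \ddt \norm{w}^2_{\Lxod 2} + \eps \norm{\Delta w}^2_{\Lxod 2} + \Bigl( 2 \lambda_0 - \frac{L^2}{\eps^3} \Bigr) \norm{w}^2_{\Lxod 2} \le 0.
\]
Setting $K := L^2 / 2$, the hypothesis $\lambda_0 > K/\eps^3$ forces $2 \lambda_0 - L^2/\eps^3 =: 2 \alpha > 0$; dropping the non-negative $\eps \norm{\Delta w}^2_{\Lxod 2}$ term, Gronwall's lemma delivers $\norm{w(t)}^2_{\Lxod 2} \le e^{- 2 \alpha t} \norm{w(0)}^2_{\Lxod 2}$, and the continuous embedding $\Lxod 2 \hookrightarrow (\Hxod 1)^*$ gives the desired exponential decay in the $(\Hxod 1)^*$-norm. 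The main obstacle is the rigorous verification of the uniform-in-time strict separation for the weak solution $\phitil$ of the mixed Dirichlet--Neumann problem \eqref{eq:phi:asymp}--\eqref{eq:ic:asymp}: the separation results recalled in Section \ref{WP} refer to the pure Neumann problem on $\Omega$, so one must either adapt them---e.g.\ via the De Giorgi-type arguments of \cite{GP2024}---to the present mixed boundary-condition setup, or include the separation as an additional hypothesis of the statement.
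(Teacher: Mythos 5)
Your overall strategy (subtract the stationary system, derive a linear differential inequality, apply Gronwall) is the right one, but there is a genuine gap at the point you yourself flag, and it is not a removable technicality in your setup: it is a consequence of working at the $L^2$ level. Testing the equation for $w=\phitil-\ftil$ with $w$ itself forces you to pair $F'(\phitil)-F'(\ftil)$ with $\Delta w$, which destroys the sign structure of the nonlinearity and leaves you needing a global Lipschitz bound for the \emph{full} $F'$, including the singular convex part $F_0'$. That bound is equivalent to a uniform-in-time strict separation property for $\phitil$ (and separation for $\ftil$), which is neither among the hypotheses of the proposition (only \ref{ass:omega}, \ref{ass:F}, \ref{ass:iniz} are assumed) nor established anywhere in the paper for this mixed Dirichlet--Neumann problem on $\Omega\setminus D$. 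Adding it as an extra hypothesis, as you suggest, proves a weaker statement; moreover your constant $K=L^2/2$ would then depend on the unproven separation threshold, contradicting the claim that $K$ depends only on the parameters of the system.

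The paper's proof avoids this entirely by performing the estimate one level lower: it tests \eqref{eq:psi:asymp} with $\mathcal{N}^{-1}\psi$, i.e.\ works in the $(H^1(\Omega\setminus D))^*$ norm that actually appears in the conclusion. This pairs $F'(\phitil)-F'(\ftil)$ with $\phitil-\ftil$ itself, so the splitting $F=F_0+F_1$ of \ref{ass:F} gives $\frac{1}{\eps}\int_{\Omega\setminus D}(F_0'(\phitil)-F_0'(\ftil))(\phitil-\ftil)\,\de x\ge 0$ by monotonicity of $F_0'$ --- no Lipschitz constant and no separation needed for the singular part --- while only the regular perturbation $F_1$ (with $\abs{F_1''}\le\theta$) produces a bad term $\frac{\theta}{\eps}\norm{\psi}^2_{L^2}$. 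That term is then absorbed by interpolating $L^2$ between $H^1$ and $(H^1)^*$ and using Young's inequality with a parameter $\sigma=\eps^2/(\theta C_{\Omega\setminus D})$, which is exactly what produces the threshold $\lambda_0>\theta^2C^2_{\Omega\setminus D}/(2\eps^3)$ with a constant depending only on $\theta$ and $\Omega\setminus D$. If you want to keep your $L^2$-level computation you must either prove the separation for the mixed problem or restructure the estimate so that the singular part only ever meets $\phitil-\ftil$ with the sign of a monotone difference; the $(H^1)^*$ pairing is the natural way to do that.
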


    \begin{proof}
        We start by subtracting equations \eqref{eq:f}--\eqref{eq:bcf2} for the approximated image $\ftil$ to equations \eqref{eq:phi:asymp}--\eqref{eq:ic:asymp}. 
        In order to use a more compact notation, we introduce the new variables 
        \[
            \psi := \phitil - \ftil, \quad \zeta := \mutil - \left( - \eps \Delta \ftil + \frac{1}{\eps} F'(\ftil) \right). 
        \]
        Then, by recalling that $\ftil$ does not depend on time, we see that they satisfy the following system:
        \begin{alignat}{2}
		  & \partial_t \psi - \Delta \zeta + \lambda_0 \psi = 0 \qquad && \hbox{in $(\Omega \setminus D) \times (0,+\infty)$}, \label{eq:psi:asymp} \\
		  & \zeta = - \eps \Delta \psi + \frac{1}{\eps} (F'(\phitil) - F'(\ftil)) \quad && \hbox{in $(\Omega \setminus D) \times (0,+\infty)$}, \label{eq:zeta:asymp} \\
		  & \partial_{\n} \psi = \partial_{\n} \zeta = 0 && \hbox{on $\partial \Omega \times (0,+\infty)$}, \label{eq:bc:asymp12} \\
            & \psi = 0, \quad \partial_{\n} \psi = 0 && \hbox{on $\partial D \times (0,+\infty)$}. \label{eq:bc:asymp22} \\
		  & \psi(0) = \phitil_0 && \hbox{in $\Omega \setminus D$}. \label{eq:ic:asymp2}
	    \end{alignat}
        Now we observe that $\psi$ satisfies homogeneous Neumann boundary conditions on all of $\partial (\Omega \setminus D)$ for any positive time $t \in (0,+\infty)$. Then, the inverse Neumann--Laplacian operator $\mathcal{N}^{-1} \psi(t)$ is well-defined for any $t \in (0,+\infty)$. 
        Thus, we can multiply equation \eqref{eq:psi:asymp} by $\mathcal{N}^{-1} \psi$ and integrate over $\Omega \setminus D$ to obtain that
        \begin{equation}
            \label{eq:asympt:test}
            \mezzo \ddt \norm{\psi}^2_{(H^1(\Omega \setminus D))^*} + \lambda_0 \norm{\psi}^2_{(H^1(\Omega \setminus D))^*} + \int_{\Omega \setminus D} \zeta \psi \, \de x = 0.
        \end{equation}
        Now, by using equation \eqref{eq:zeta:asymp}, the boundary conditions \eqref{eq:bc:asymp12}--\eqref{eq:bc:asymp22} and hypothesis \ref{ass:F} on $F$, we infer that
        \begin{align*}
            & \int_{\Omega \setminus D} \zeta \psi \, \de x = 
            \eps \int_{\Omega \setminus D} \abs{\nabla \psi}^2 \, \de x + \frac{1}{\eps} \int_{\Omega \setminus D} (F'(\phitil) - F'(\ftil)) (\phitil - \ftil) \, \de x \\
            & \quad = \eps \int_{\Omega \setminus D} \abs{\nabla \psi}^2 \, \de x + \frac{1}{\eps} \int_{\Omega \setminus D} \underbrace{(F_0'(\phitil) - F_0'(\ftil)) (\phitil - \ftil)}_{\ge \, 0} \, \de x + \frac{1}{\eps} \int_{\Omega \setminus D} (F_1'(\phitil) - F_1'(\ftil)) (\phitil - \ftil) \, \de x \\
            & \quad \ge \eps \int_{\Omega \setminus D} \abs{\nabla \psi}^2 \, \de x - \frac{\theta}{\eps} \int_{\Omega \setminus D} \abs{\psi}^2 \, \de x,
        \end{align*}
        where $\theta > 0$ is given by \ref{ass:F}. 
        Next, we observe that, by interpolation inequalities and Young's inequality with some $\sigma > 0$,
        \begin{align*}
            \frac{\theta}{\eps} \int_{\Omega \setminus D} \abs{\psi}^2 \, \de x 
            & \le \frac{\theta}{\eps} C_{\Omega \setminus D} \norm{\nabla \psi}^2_{L^2(\Omega \setminus D)} \norm{\psi}^2_{(H^1(\Omega \setminus D))^*} \\
            & \le \frac{\theta C_{\Omega \setminus D}}{2 \eps} \sigma \norm{\nabla \psi}^2_{L^2(\Omega \setminus D)} + \frac{\theta C_{\Omega \setminus D}}{2 \eps \sigma} \norm{\psi}^2_{(H^1(\Omega \setminus D))^*},
        \end{align*}
        where the constant $C_{\Omega \setminus D} > 0$ depends only on the set $\Omega \setminus D \subset \R^2$. 
        Consequently, from \eqref{eq:asympt:test} we obtain that 
        \[
            \mezzo \ddt \norm{\psi}^2_{(H^1(\Omega \setminus D))^*} + \left( \lambda_0 - \frac{\theta C_{\Omega \setminus D}}{2 \eps \sigma} \right) \norm{\psi}^2_{(H^1(\Omega \setminus D))^*} + \left( \eps - \frac{\theta C_{\Omega \setminus D}}{2 \eps} \sigma \right) \norm{\nabla \psi}^2_{L^2(\Omega \setminus D)} \le 0.
        \]
        Then, if we choose $\sigma > 0$ such that 
        \[
            \sigma = \frac{\eps^2}{\theta C_{\Omega \setminus D}},
        \]
        we conclude that 
        \begin{equation}
        \label{eq:asympt:estimate}
            \mezzo \ddt \norm{\psi}^2_{(H^1(\Omega \setminus D))^*} + \left( \lambda_0 - \frac{\theta^2 C^2_{\Omega \setminus D}}{2 \eps^3} \right) \norm{\psi}^2_{(H^1(\Omega \setminus D))^*} + \frac{\eps}{2} \norm{\nabla \psi}^2_{L^2(\Omega \setminus D)} \le 0,
        \end{equation}
        for any $t \in (0, + \infty)$.
        From \eqref{eq:asympt:estimate}, by application of Gronwall's inequality, we immediately deduce that, if 
        \[
            \lambda_0 > \frac{\theta^2 C^2_{\Omega \setminus D}}{2 \eps^3},
        \]
        then 
        \[
            \norm{\phitil(t) - \ftil}^2_{(H^1(\Omega \setminus D))^*} \to 0 \quad \hbox{exponentially as $t \to + \infty$,}
        \]
        by recalling the definition of $\psi$ as $\phitil - \ftil$.
    \end{proof}

    \begin{remark}
        We mention that Proposition \ref{prop:asympt} is a \emph{formal} result, in the sense that we assume the existence of a solution to \eqref{eq:phi:asymp}--\eqref{eq:ic:asymp} without actually proving it. 
        However, our aim was to show that, under suitable assumptions, large values of $\lambda_0$ guarantee that the inpainted image stays close to a suitable phase-field approximation of the original image for all times. 
        This gives some further motivation on why it is crucial to find optimal large values of $\lambda_0$, for instance through our proposed optimal control procedure.
    \end{remark}

\noindent
{\bf Acknowledgments.} 
The authors wish to thank the anonymous reviewers, who carefully read the manuscript and provided many comments that improved the quality of the paper.
C.~Cavaterra, M.~Fornoni, and M.~Grasselli are members of GNAMPA (Gruppo Nazionale per l'Ana\-li\-si Matematica, la Probabilit\`{a} e le loro Applicazioni) of INdAM (Istituto Nazionale di Alta Matematica). 
C.~Cavaterra, M.~Fornoni and M.~Grasselli have been partially supported by the MIUR-PRIN Grant 2020F3NCPX ``Mathematics for Industry 4.0 (Math4I4)''.
C.~Cavaterra has been partially supported by the MIUR-PRIN Grant 2022  
	``Partial differential equations and related geometric-functional inequalities''.
    Elena Beretta's research has been partially supported by NYUAD Science Program Project Fund AD364.
This research is part of the activities of ``Dipartimento di Eccellenza 2023-2027'' of Universit\`a degli Studi di Milano (C.~Cavaterra) and Politecnico di Milano (M.~Grasselli).

	
	\footnotesize

\end{document}